\documentclass[11pt]{article}
\usepackage[margin=1.1in]{geometry}
\usepackage{amsmath,amssymb,amsthm}
\usepackage{graphicx}
\usepackage[hyphens]{url}
\usepackage[hidelinks]{hyperref}
\hypersetup{pdfpagelayout=OneColumn}

\newtheorem{theorem}{Theorem}
\newtheorem{lemma}[theorem]{Lemma}
\newtheorem{corollary}[theorem]{Corollary}

\newcommand{\R}{\mathbb{R}}
\newcommand{\ip}[2]{\langle #1,#2\rangle}

\title{Counterexamples for {BFGS}-type methods under arbitrary strong {Wolfe} constants}
\author{Rui Diao}
\date{}

\begin{document}
\maketitle

\begin{abstract}
Whether the Broyden--Fletcher--Goldfarb--Shanno (BFGS) method and its variants can fail to converge on smooth nonconvex functions under realistic line-search parameters has remained a central open problem in quasi-Newton theory since the landmark counterexample of Dai (2002), which was confined to small Armijo parameters ($c_1 \le 1/84 \approx 0.0119$ on Powell's geometry, or $c_1 \le 69/7480 \approx 0.0092$ on his six-point cycle) and an objective function unbounded below.
In this paper, we resolve the long-standing open question, posed by Dai (2002) following a discussion with J.~C.~Gilbert, of whether such counterexamples exist in theory for every Armijo parameter $c_1 \in (0, 1)$.
Specifically, for every prescribed pair of line-search parameters $0 < c_1 < c_2 < 1$, we construct an objective function $f \in C^\infty(\R^2)$, bounded below and with Lipschitz continuous gradient, on which every method in a broad conjugacy class $\mathcal{C}$ equipped with the first-local-minimizer line search generates an infinite sequence of iterates with $\|\nabla f(x_k)\| = 1$ for all $k \ge 0$.
The class $\mathcal{C}$ encompasses the classical full-memory BFGS method, limited-memory BFGS (L-BFGS) with arbitrary memory $m \ge 1$, the Broyden positive family, and the Hestenes--Stiefel conjugate gradient method.
The steps are the standard first local minimizers along the search rays and simultaneously satisfy the strong Wolfe, weak Wolfe, Armijo, and Goldstein conditions with constants $(c_1, c_2)$.
The construction operates in the minimal possible dimension $n = 2$, exploiting a non-decaying conjugate descent orbit in the plane coupled with an explicit tubular interpolation whose two-bump axial curvature profile places the Armijo ratio anywhere in $(0, 1)$.
\end{abstract}

\noindent
{\small
\textit{Keywords:} BFGS, L-BFGS, Hestenes--Stiefel, strong Wolfe line search,
Goldstein line search, nonconvex optimization, counterexample.
\quad
\textit{MSC:} 90C53, 90C30, 65K05.
}

\section{Introduction}

The Broyden--Fletcher--Goldfarb--Shanno (BFGS) method~\cite{broyden1970,nocedalwright2006} is widely regarded as the most effective and versatile quasi-Newton method for unconstrained continuous optimization, while its limited-memory extension, L-BFGS~\cite{liu1989}, serves as the benchmark workhorse for large-scale scientific computation and machine learning.
On convex objective functions, the theoretical foundation of quasi-Newton methods is thoroughly established: Powell~\cite{powell1976} proved that the full-memory BFGS method with Wolfe line search converges globally to the minimizer, Byrd, Nocedal, and Yuan~\cite{byrd1987} extended this global convergence guarantee to the restricted Broyden convex class (excluding DFP), and Liu and Nocedal~\cite{liu1989} established global convergence for L-BFGS on uniformly convex problems.

For general smooth, nonconvex objective functions, however, the global convergence behavior of quasi-Newton methods has presented a profound, long-standing puzzle.
Unlike gradient descent or trust-region methods, whose step directions remain firmly coupled to the steepest descent heading, quasi-Newton updates accumulate curvature information from past steps that can cause the search directions to become increasingly orthogonal to the gradient.
Powell~\cite{powell1984} first demonstrated this instability in $\R^2$, showing that both the Polak--Ribi\`ere--Polyak (PRP) conjugate gradient method and the BFGS quasi-Newton method can cycle indefinitely around eight nonstationary points when each line search selects an arbitrary local minimizer that provides function reduction.
In a landmark breakthrough, Dai~\cite{dai2002} proved that the BFGS method with Wolfe line search can likewise fail to converge on a nonconvex $C^\infty$ function, cycling along a six-point orbit with $\|\nabla f(x_k)\| = 1$ (though the constructed function is linear in one coordinate and hence unbounded below).
Because members of the Broyden positive family~\cite{broyden1970}, limited-memory BFGS~\cite{liu1989}, and the Hestenes--Stiefel conjugate gradient method~\cite{hestenes1952} all generate an identical conjugate descent heading following an exact step, Dai's six-point construction established convergence failure across this entire family of methods.

Yet Dai's counterexample left a fundamental question unresolved.
His construction critically restricted the Armijo parameter to an artificially tiny regime: $c_1 \le 69/7480 \approx 0.0092$ on the six-point cycle, and $c_1 \le 1/84 \approx 0.0119$ on Powell's eight-point geometry.
As noted by Dai~\cite{dai2002}, while practical quasi-Newton codes often employ small parameters such as $c_1 = 10^{-4}$ or $0.01$ (which fall within his $1/84$ threshold), it remained an open theoretical question, raised in discussions with J.~C.~Gilbert, whether nonconvergence counterexamples exist for \emph{every} Armijo parameter $c_1 < 1$, including moderate choices such as $c_1 = 0.1$ or arbitrary values approaching $1$.
Subsequent investigations addressed different facets of the problem but encountered other structural barriers:
Mascarenhas~\cite{mascarenhas2004} constructed a $C^\infty$ counterexample for BFGS on $\R^3$ cycling along a nonstationary octagon, but the objective function is unbounded below, the Armijo decrease ratio is fixed at $\approx 0.0343$, and the construction required the line search to select the \emph{global} minimizer along each search line, leaping over intervening local minimizers.
Dai~\cite{dai2013} later introduced a four-dimensional polynomial on which BFGS takes unique ray minimizers, but the objective function is unbounded below and the Armijo parameter remains bounded by $c_1 \le 0.0265$.
Mascarenhas~\cite{mascarenhas2014} achieved bounded level sets for BFGS with exact line searches, but at the cost of expanding the dimension to $n = 9$ with a cycle of period $576$, and an objective function that is not explicit: obtained via Whitney extension, it has only Lipschitz continuous second derivatives rather than being $C^\infty$.
While cautious and modified quasi-Newton algorithms have been proposed to enforce global convergence on nonconvex problems~\cite{li2001}, whether the standard, unmodified BFGS method can fail under arbitrary line-search parameters $(c_1, c_2)$ on a smooth function bounded below in low dimension has remained an open problem for over twenty years.

This paper answers the question posed by Dai in the affirmative, while additionally establishing the counterexample under both classical Zoutendijk hypotheses ($f$ bounded below with Lipschitz continuous gradient).
We show that for \emph{every} prescribed pair of line-search parameters $0 < c_1 < c_2 < 1$, there exists an objective function $f \in C^\infty(\R^2)$ on which every method in a broad conjugacy class $\mathcal{C}$ fails to drive the gradient to zero.
The steps taken by the algorithms are not artificial global minimizers, but the standard first local minimizers along each search ray $t \ge 0$, and they simultaneously satisfy the strong Wolfe, weak Wolfe, Armijo, and Goldstein conditions for the given constants $(c_1, c_2)$.

The dynamical mechanism is rooted in Zoutendijk's lemma~\cite{wolfe1969,zoutendijk1970,nocedalwright2006}, which establishes that any line search satisfying the Wolfe or Goldstein conditions guarantees the convergence of the sum $\sum_{k=0}^\infty \cos^2\theta_k\,\|g_k\|^2 < \infty$, where $\cos\theta_k = -\ip{g_k}{d_k}/(\|g_k\|\|d_k\|)$ is the cosine of the angle between the search direction and the steepest descent direction.
If the search directions remain sufficiently well-angled with $\liminf_{k\to\infty} \cos\theta_k > 0$, the Zoutendijk condition forces the gradient norm to vanish, $\liminf_{k\to\infty} \|g_k\| = 0$.
On the non-cycling orbit constructed here, the search angle cosine decays geometrically ($\cos\theta_k = \sin(\pi/2^{k+1}) \approx \pi/2^{k+1}$), which allows the Zoutendijk series $\sum \cos^2\theta_k < \infty$ to converge while the gradient norm remains strictly bounded away from zero ($\|g_k\| \equiv 1$).

In dimension $n = 1$, the conjugacy condition $\ip{d_{k+1}}{y_k}=0$ with $y_k \ne 0$ forces $d_{k+1}=0$ (so Class~$\mathcal C$ contains no non-zero update in $\R^1$), and any descent search direction has $|\cos\theta_k| \equiv 1$, which by Zoutendijk's lemma on a function bounded below with Lipschitz gradient forces $\sum \|g_k\|^2 < \infty$ and $\lim \|g_k\| = 0$; therefore, dimension $n = 2$ is strictly minimal.
In two dimensions, following an exact step ($g_{k+1} \perp s_k$), the conjugacy and descent requirements uniquely determine the heading of $d_{k+1}$ (Lemma~\ref{lem:unique}): every member of the conjugacy class $\mathcal{C}$ generates identical search rays, differing only in the unnormalized step magnitude $\|d_{k+1}\|$.
We note that Dai~\cite{dai2013}, citing Powell~\cite{powell2000} on DFP in two variables (which shares search directions with BFGS under exact line searches by Dixon's theorem~\cite{dixon1972}), observes that BFGS with first-local-minimizer stepsizes guarantees $\liminf_{k\to\infty} \|\nabla f(x_k)\| = 0$ on twice continuously differentiable functions in $\R^2$ when the level sets are bounded.
In our construction, the trajectory escapes linearly to infinity ($\|x_k\| \to \infty$), so the sublevel sets are intrinsically unbounded; this places the counterexample outside the scope of Powell's theorem while preserving lower boundedness and gradient Lipschitz continuity (see Section~\ref{sec:remarks}).

\begin{theorem}
\label{thm:main}
Let $0<c_1<c_2<1$. There exist $f\in C^\infty(\R^2)$, bounded below with
Lipschitz continuous gradient, and a point $x_0\in\R^2$, such that
every method of Class~$\mathcal C$ (\S\ref{sec:class}) started at $x_0$
with initial search direction $d_0=-g_0$ (with $H_0 \succ 0$, e.g., $H_0 = I$, for quasi-Newton members) and equipped with the first-local-minimizer
line search generates iterates $x_{k+1}=x_k+s_k$ satisfying
$\|s_k\|=1$ and $\|\nabla f(x_k)\|=1$ for all $k\ge0$.
Moreover, each step satisfies the strong Wolfe, weak Wolfe, Armijo, and
Goldstein conditions with constants $(c_1,c_2)$.
\end{theorem}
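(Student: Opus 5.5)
The plan is to prescribe the orbit first and then build $f$ around it. I will fix an explicit planar polygonal orbit $x_0,x_1,\dots$ with unit steps $s_k=x_{k+1}-x_k$ and unit gradients $g_k$, and then construct $f$ so that it realizes this data and so that the first local minimizer along each search ray is exactly $x_{k+1}$.

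\textbf{The orbit.} I choose step headings $u_k=s_k$ and gradients $g_k$ with $\ip{g_{k+1}}{s_k}=0$, so that every step is exact. By Lemma~\ref{lem:unique}, every method of Class~$\mathcal C$ then produces the same next heading, namely the unit vector $d_{k+1}/\|d_{k+1}\|$ that is conjugate to $y_k=g_{k+1}-g_k$ and is a descent direction. Concretely, I will let $g_{k+1}$ be the unit normal to $s_k$. The angle between $-g_k$ and $s_k$ is then $\theta_k=\pi/2-\pi/2^{k+1}$, consistent with the stated $\cos\theta_k=\sin(\pi/2^{k+1})$.

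I then verify by induction that the conjugacy condition $\ip{s_{k+1}}{y_k}=0$ is consistent with this angle sequence. In two dimensions this is a single scalar equation relating successive angles, and I expect it to be solvable by a halving recursion. The base case is $d_0=-g_0$, so $\theta_0=0$. I will also arrange the headings to converge, so that $x_k$ escapes linearly to infinity while the points stay uniformly separated. The step length along each ray is irrelevant to the iterate, because the line search returns $x_{k+1}$ regardless of $\|d_{k+1}\|$. Hence $\|s_k\|=1$ for all members of the class.

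\textbf{Local model on each segment.} Along the segment $[x_k,x_{k+1}]$, parametrized by $t\in[0,1]$, I prescribe a one-dimensional profile $\phi_k(t)=f(x_k+ts_k)$ with the following properties:
\begin{itemize}
\item $\phi_k'(0)=\ip{g_k}{s_k}=-\cos\theta_k<0$, $\phi_k'(1)=0$, and $\phi_k'<0$ on $[0,1)$, so that $t=1$ is the first local minimizer;
\item $\phi_k''(1)>0$;
\item the Armijo ratio $(\phi_k(0)-\phi_k(1))/(-\phi_k'(0))$ lies strictly between $c_1$ and $1-c_1$, which gives both Armijo and Goldstein.
\end{itemize}
Strong and weak Wolfe are then automatic from $\phi_k'(1)=0$.

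Since the mean of $-\phi_k'$ over $[0,1]$ can be any number in $(0,-\phi_k'(0))$ provided $\phi_k'$ is allowed to be non-monotone, I will use a two-bump curvature profile. It is built from a fixed smooth template rescaled by $\cos\theta_k$ and tunes the ratio to a fixed target $\rho\in(c_1,\min(1-c_1,\,1))$. For example, $\rho=1/2$ works whenever $c_1<1/2$; in general I choose $\rho\in(c_1,1)$ and check the Goldstein upper bound separately. Because everything scales with $\cos\theta_k$, the decreases $f(x_k)-f(x_{k+1})\approx\rho\cos\theta_k$ are summable. This gives boundedness below along the orbit.

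\textbf{Gluing.} This is where I expect the main obstacle to lie. I will define $f$ in a tubular neighborhood of each segment as
\[
\phi_k(t)+\ip{g_k(t)}{\nu_k}\,r+\tfrac12 a\,r^2,
\]
where $r$ is the normal coordinate, $\nu_k$ is the unit normal to $s_k$, and the normal slope interpolates the prescribed normal components of $g$ at the two endpoints. Corners at the $x_k$ are blended with smooth partitions of unity, using the uniform separation of the points. Away from the tube, $f$ is extended by a bounded smooth function.

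The difficulty is to keep the following controls uniform in $k$:
\begin{itemize}
\item the gradient at the nodes is exactly $g_k$;
\item the blending does not perturb $\phi_k$ on $[0,1]$, so no spurious earlier minimizer appears on the ray;
\item all second derivatives are bounded independently of $k$, which gives a Lipschitz gradient;
\item $f$ is bounded below globally.
\end{itemize}
Boundedness below needs care, since $f$ grows in the normal direction with slope up to $1$ while the orbit drifts to infinity. I would handle this by capping the transverse linear term with a cutoff at fixed radius and adding a bounded offset. The key fact making this work is that the node values $f(x_k)$ converge, because the decreases $\cos\theta_k$ are summable.

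Once these uniform estimates are in place, the conclusions of Theorem~\ref{thm:main} follow directly from the segment properties together with Lemma~\ref{lem:unique}.
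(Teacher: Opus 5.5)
Your orbit, the exact-step argument for Wolfe, and the summability of the decreases match the paper. The genuine gap is the corner gluing. You flag it as the main obstacle but leave it open, and the tool you name cannot do what you require.

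\textbf{Why partition-of-unity blending fails.} Near $x_{k+1}$ a partition of unity gives $f=\omega F_k+(1-\omega)F_{k+1}$. Hence $f-F_k=(1-\omega)(F_{k+1}-F_k)$ on segment $k$, and $f-F_{k+1}=\omega(F_k-F_{k+1})$ on segment $k+1$. Leaving both ray profiles unperturbed needs $\omega=1$ wherever $F_{k+1}\ne F_k$ on segment $k$, and $\omega=0$ wherever $F_k\ne F_{k+1}$ on segment $k+1$. Both segments end at $x_{k+1}$ and $\omega$ is continuous there, so $F_k=F_{k+1}$ must hold along one of the segments near the vertex. That is a compatibility of consecutive templates beyond first order, which yours do not have.

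If you tolerate the perturbation instead, it is not small. The axial data on segment $k$ have size $\cos\theta_k\approx\pi/2^{k+1}$, and so does the mismatch $s_k^{\mathsf T}(\nabla^2F_{k+1}-\nabla^2F_k)(x_{k+1})s_k$. This mismatch contains $\phi_{k+1}''(0)-\phi_k''(1)$. Unless the tilt is frozen near the ends, it also contains a term of size $2|b'|\,|\sin\psi_{k+1}|$ with $|b'|=O(1)$, because the normal slope changes by $1+\cos\psi_k\approx2$ along each segment. With $\nabla\omega=O(1/\delta)$ across a blending region of width $\delta$, the resulting error in the ray derivative $\varphi'$ is as large as $\varphi'$ itself. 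Nothing then excludes a stationary point before $t=1$, or $\varphi''(1)\le0$.

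The natural repair is to make $F_k$ and $F_{k+1}$ coincide on a neighborhood of $x_{k+1}$. That needs equal Hessians there. With your fixed transverse coefficient $a$, comparing traces in the two frames forces $\phi_k''(1)=\phi_{k+1}''(0)$. A single template rescaled by $\cos\theta_k$ cannot satisfy this, since $\cos\theta_k/\cos\theta_{k+1}=2\cos(\pi/2^{k+2})$ varies with $k$.

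\textbf{How the paper avoids this.} It prescribes the isotropic Hessian $\alpha_{k+1}I$ at $x_{k+1}$ with $\alpha_{k+1}=\sigma_k=\cos\theta_k$. It uses the template $a(\tau)+b(\tau)n+\tfrac12a''(\tau)n^2$, whose transverse curvature equals the axial one. On the two collars it takes $a''\equiv\alpha_k$, $a''\equiv\alpha_{k+1}$ and $b$ constant, with caps and split $\lambda_k$ chosen per segment. The bound $\alpha_k<2\alpha_{k+1}$ keeps the surplus $M_k>0$ for one fixed $\delta$. Then $F_k\equiv Q_k$ on $\{\tau_k\le\delta\}$ and $F_k\equiv Q_{k+1}$ on $\{\tau_k\ge1-\delta\}$ (Lemma~\ref{lem:collar}). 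Every overlap lies in the wedge where both strips equal $Q_k$ (Lemma~\ref{lem:seam}), so no blending between strips is needed. The only cutoff is the exterior $\zeta=S(\Phi)$, which is $\equiv1$ on $P$ and on each $B(x_k,r_\circ)$.

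\textbf{Smaller corrections.} First, the theorem's Goldstein condition \eqref{eq:goldstein} is $c_1\le(f_{k+1}-f_k)/\eta_k\le c_2$, not $\le1-c_1$. Your window $(c_1,1-c_1)$ is empty for $c_1\ge\tfrac12$, and $\rho=\tfrac12$ may exceed $c_2$. You need $\rho\in(c_1,c_2)$, with the collar and bump scale shrunk to $O(\min(c_1,1-c_2))$ so that the tunable range contains $\rho$. Second, the halving recursion needs the alternating normal $g_{k+1}=(-1)^{k+1}Js_k$; same-side normals give $s_{k+1}\parallel s_k-s_{k-1}$. Third, the tube needs non-adjacent \emph{segments}, not just vertices, to be uniformly separated.
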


The same function $f$, extended trivially to $\R^n$ for $n \ge 2$ by setting $f(x_1, \dots, x_n) = f(x_1, x_2)$, produces the identical nonconvergent trajectory in $\R^n$ by initializing at $(x_0, 0) \in \R^n$ with $H_0 = I_n$ (cf.~Dai~\cite{dai2013}).
A companion open-source Python package implementing the construction and the full numerical verification suite is available at \url{https://github.com/diaorui/bfgs-wolfe-counterexample} (see Section~\ref{sec:numerical}).

Section~\ref{sec:class} records the line-search criteria and defines the conjugacy class $\mathcal C$.
Section~\ref{sec:orbit} defines the discrete orbit in closed form, verifies that it is an exact trajectory of Class~$\mathcal C$, and proves the non-self-intersection and piece separation of its tubular neighborhood.
Section~\ref{sec:f} formulates the polynomial interpolant on the segment strips and assembles the global $C^\infty$ objective function on $\R^2$.
Section~\ref{sec:ls} calibrates the parameters, verifies all line-search conditions, and completes the proof of Theorem~\ref{thm:main}.
Section~\ref{sec:numerical} presents numerical verification of the trajectory and line-search metrics across Class~$\mathcal C$.
Appendix~\ref{sec:app-blocks} collects the analytical properties of the smooth transition and window building blocks.

\section{Line search and conjugacy}
\label{sec:class}

Let $f\in C^1(\R^n)$ and write $g_k=\nabla f(x_k)$, $s_k=x_{k+1}-x_k$,
$y_k=g_{k+1}-g_k$, $\eta_k=\ip{g_k}{s_k}$, and $\sigma_k=\ip{s_k}{y_k}$.
Fix $0<c_1<c_2<1$. A step $s$ from a nonstationary point $x$ satisfies
the \emph{strong Wolfe conditions} if
\begin{align}
f(x+s)&\le f(x)+c_1\,\ip{\nabla f(x)}{s}, \label{eq:armijo}\\
\bigl|\ip{\nabla f(x+s)}{s}\bigr|&\le c_2\,\bigl|\ip{\nabla f(x)}{s}\bigr|. \label{eq:curv}
\end{align}
The first inequality is \emph{Armijo}; the second is \emph{curvature}.
If $s_k$ is a descent step satisfying both, then $\eta_k<0$, and
the curvature condition \eqref{eq:curv} gives $\ip{g_{k+1}}{s_k}\ge c_2\eta_k>\eta_k$,
so $\sigma_k=\ip{s_k}{g_{k+1}-g_k}=\ip{g_{k+1}}{s_k}-\eta_k>0$. \emph{Weak Wolfe} is Armijo together with the one-sided
curvature $\ip{\nabla f(x+s)}{s}\ge c_2\ip{\nabla f(x)}{s}$. It follows
from strong Wolfe. The \emph{Goldstein conditions} are
\begin{equation}
\label{eq:goldstein}
f(x)+c_2\,\ip{\nabla f(x)}{s}
\le f(x+s)
\le f(x)+c_1\,\ip{\nabla f(x)}{s}.
\end{equation}
The right-hand inequality is Armijo. For a descent step
$\eta=\ip{\nabla f(x)}{s}<0$, both together are
$c_1\le(f(x+s)-f(x))/\eta\le c_2$.
We note that the constant $c_2$ is deliberately reused for both the strong Wolfe curvature parameter in \eqref{eq:curv} and the Goldstein lower decrease bound in \eqref{eq:goldstein}, following a standard unified notation; the condition $c_1 < c_2$ is required only to ensure that the Goldstein decrease interval $[c_1, c_2]$ is non-empty (Armijo and strong Wolfe hold for any $c_2 \in (0, 1)$ since $\ip{g_{k+1}}{s_k} = 0$ exactly).

If the line search returns the first local minimizer of
$\varphi(t)=f(x_k+td_k)$ on $t\ge0$ (the \emph{first-local-minimizer line search}), we call the step \emph{exact}, and
$\ip{g_{k+1}}{d_k}=0$.

Following Dai~\cite{dai2002}, consider iterative methods that, whenever
$\ip{g_{k+1}}{d_k}=0$, produce a next direction satisfying
\begin{equation}
\label{eq:classC}
\ip{d_{k+1}}{y_k}=0,\qquad \ip{d_{k+1}}{g_{k+1}}<0.
\end{equation}
Call this class $\mathcal C$. It includes:
\begin{itemize}
\item the Hestenes--Stiefel conjugate-gradient method~\cite{hestenes1952}
(which unconditionally satisfies conjugacy $\ip{d_{k+1}}{y_k}=0$, and yields descent
$\ip{d_{k+1}}{g_{k+1}}<0$ whenever $\ip{g_{k+1}}{d_k}=0$; under exact line searches,
it coincides with the Polak--Ribi\`ere--Polyak method~\cite[\S5.2]{nocedalwright2006});
\item BFGS, DFP, and the Broyden positive family~\cite{broyden1970,nocedalwright2006}
parameterized with $\theta \ge 0$ as in Dai~\cite[Eq.~(1.8)]{dai2002} where $\theta=0$ is BFGS and $\theta=1$ is DFP
(symmetry and the secant equation $H_{k+1}y_k=s_k$ give
$\ip{d_{k+1}}{y_k}=-\ip{g_{k+1}}{s_k}=0$ whenever $\ip{g_{k+1}}{d_k}=0$;
the curvature condition $\sigma_k = \ip{s_k}{y_k} > 0$ derived above ensures that $H_{k+1} \succ 0$ whenever $H_k \succ 0$, guaranteeing the descent condition $\ip{d_{k+1}}{g_{k+1}} = -g_{k+1}^{\mathsf T} H_{k+1} g_{k+1} < 0$);
\item L-BFGS of every memory $m\ge1$~\cite{liu1989}
(in operator form, $H_{k+1} = V_k^{\mathsf T}\widehat H_k V_k + \frac{s_k s_k^{\mathsf T}}{\ip{s_k}{y_k}}$ with $V_k = I - \frac{y_k s_k^{\mathsf T}}{\ip{s_k}{y_k}}$; because $V_k y_k = 0$, the secant relation $H_{k+1}y_k=s_k$ holds identically regardless of memory $m$ and background scaling $\widehat H_k \succ 0$, preserving positive definiteness $H_{k+1} \succ 0$ since $\sigma_k > 0$).
\end{itemize}
The first direction is steepest descent, $d_0=-g_0$; for quasi-Newton and limited-memory methods this corresponds to initializing with any symmetric positive definite matrix $H_0 \succ 0$ satisfying $H_0 g_0 \propto g_0$ (in particular, the standard initialization $H_0 = I$), providing the base case $H_0 \succ 0$ for positive definiteness.

\begin{lemma}[Conjugate descent ray]
\label{lem:unique}
In $\R^2$, if $y_k \ne 0$ and $y_k \not\parallel g_{k+1}$, there is a unique ray of directions
$d_{k+1}$ satisfying \eqref{eq:classC}.
\end{lemma}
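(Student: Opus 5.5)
The argument is elementary linear algebra in the plane. The conjugacy condition $\ip{d_{k+1}}{y_k}=0$ with $y_k\neq 0$ confines $d_{k+1}$ to the one-dimensional subspace $y_k^\perp$. The descent condition $\ip{d_{k+1}}{g_{k+1}}<0$ then selects one of the two open half-lines of that subspace, provided $g_{k+1}$ is not orthogonal to the subspace.

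First, since $y_k\neq0$ in $\R^2$, we write $y_k^\perp=\{\lambda J y_k:\lambda\in\R\}$, where $J$ denotes rotation by $\pi/2$, i.e.\ $J(a,b)=(-b,a)$. Every $d_{k+1}$ satisfying $\ip{d_{k+1}}{y_k}=0$ then has the form $d_{k+1}=\lambda Jy_k$ for some $\lambda\in\R$, and every such vector satisfies conjugacy.

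Second, we substitute into the descent condition, which becomes $\lambda\,\ip{Jy_k}{g_{k+1}}<0$. The key observation is that $\ip{Jy_k}{g_{k+1}}\neq 0$. In $\R^2$, $\ip{Jy_k}{g_{k+1}}=0$ holds iff $g_{k+1}\in (Jy_k)^\perp=\operatorname{span}(y_k)$, i.e.\ iff $g_{k+1}\parallel y_k$. The vector $g_{k+1}=0$ counts as parallel to everything, and in any case the iterate is nonstationary. This case is excluded by hypothesis. Hence the admissible set is exactly $\{\lambda Jy_k:\ \lambda\,\ip{Jy_k}{g_{k+1}}<0\}$, which is a single open ray $\{t\,u: t>0\}$ with $u=-\operatorname{sign}(\ip{Jy_k}{g_{k+1}})\,Jy_k\neq0$.

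Third, we record the explicit direction for later use:
\begin{equation*}
d_{k+1}\ \propto\ -\ip{Jy_k}{g_{k+1}}\,Jy_k .
\end{equation*}
Equivalently, this is the projection of $-g_{k+1}$ onto $y_k^\perp$, namely $-g_{k+1}+\frac{\ip{g_{k+1}}{y_k}}{\|y_k\|^2}y_k$, up to a positive factor. This makes clear that all members of Class~$\mathcal C$ share the heading and differ only in $\|d_{k+1}\|$.

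There is no real obstacle. The only point needing care is the equivalence between $\ip{Jy_k}{g_{k+1}}=0$ and $g_{k+1}\parallel y_k$, which uses $n=2$, so that $y_k^\perp$ is a line. If $y_k\parallel g_{k+1}$, the descent condition would be incompatible with conjugacy, and the hypothesis exists precisely to rule this out.
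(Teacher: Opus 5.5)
Your proof is correct and follows the paper's own argument: conjugacy with $y_k\neq0$ confines $d_{k+1}$ to the line $y_k^\perp$, and $y_k\not\parallel g_{k+1}$ makes $\ip{\cdot}{g_{k+1}}$ a nonzero functional on that line, so exactly one of its two open rays is a descent ray. Your parametrization by $Jy_k$ and the closed form (the projection of $-g_{k+1}$ onto $y_k^\perp$) are extras the paper does not need, and both check out.
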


\begin{proof}
Because $y_k \ne 0$, the conjugacy condition $\ip{d_{k+1}}{y_k}=0$ confines $d_{k+1}$ to the one-dimensional subspace $y_k^\perp$.
Since $y_k \not\parallel g_{k+1}$, the subspace $y_k^\perp$ is not orthogonal to $g_{k+1}$, so $\ip{\cdot}{g_{k+1}}$ is a non-zero linear functional on $y_k^\perp$.
Its two opposing open rays therefore have strictly opposite inner products with $g_{k+1}$, and exactly one of them satisfies the descent condition $\ip{d_{k+1}}{g_{k+1}}<0$.
\end{proof}

Members of $\mathcal C$ therefore share the next heading; they may
differ in $\|d_{k+1}\|$.

\section{The discrete orbit and tubular geometry}
\label{sec:orbit}

This section constructs the discrete backbone of the counterexample: an infinite sequence of vertices $(x_k)$, unit search directions $(s_k)$, and unit gradients $(g_k)$ in $\R^2$.
We prove that every method in Class~$\mathcal C$ reproduces this orbit, establish that the resulting polyline never self-intersects, and construct a thickened tubular neighborhood $\mathcal U$ whose constituent pieces are mutually disjoint away from adjacent collar seams.

\subsection{The orbit and its geometric algebra}

Throughout, $J = \begin{pmatrix} 0 & -1 \\ 1 & 0 \end{pmatrix}$ denotes the standard counterclockwise quarter-turn of $\R^2$, satisfying $J^2 = -I$, $\ip{Ju}{Jv} = \ip{u}{v}$, and $\ip{Ju}{u} = 0$ for all $u \in \R^2$.

The discrete trajectory $(x_k, s_k, g_k)_{k \ge 0}$ is defined in closed form via heading angles $\Theta_k$:
\begin{equation}
\label{eq:orbit}
\begin{aligned}
\Theta_k &:= \frac{\pi}{6} + (-1)^{k+1}\frac{\pi}{3 \cdot 2^{k+1}} && (k \ge 0),\\
s_k &:= (\cos\Theta_k, \sin\Theta_k) && (k \ge 0),\\
x_0 &:= (0, 0), \qquad x_{k+1} := x_k + s_k && (k \ge 0),\\
g_0 &:= -s_0 = (-1, 0), \qquad g_{k+1} := (-1)^{k+1} J s_k && (k \ge 0).
\end{aligned}
\end{equation}
By construction, every step and gradient is a unit vector: $\|s_k\| = \|g_k\| = 1$ for all $k \ge 0$ (since each $s_k$ lies on the unit circle and $J$ is orthogonal).
As $k \to \infty$, the headings $\Theta_k$ approach $\pi/6$, and the search directions converge to the limiting unit vector $s_* := (\cos\frac{\pi}{6}, \sin\frac{\pi}{6}) = (\frac{\sqrt3}{2}, \frac12)$.

\paragraph{Algorithmic quantities and notation.}
The trajectory analysis uses the following standard quasi-Newton and geometric quantities:
\begin{equation}
\label{eq:derived}
\psi_k := \Theta_k - \Theta_{k-1} \quad (k \ge 1), \qquad y_k := g_{k+1} - g_k, \quad \eta_k := \ip{g_k}{s_k}, \quad \sigma_k := \ip{s_k}{y_k} \quad (k \ge 0).
\end{equation}
Here $\psi_k$ represents the physical signed turn from $s_{k-1}$ to $s_k$, $y_k$ the secant gradient difference, $\eta_k$ the directional derivative along the step, and $\sigma_k$ the secant curvature.
We reserve lowercase $\theta_k \in [0, \pi]$ for the standard Zoutendijk angle between the steepest descent direction $-g_k$ and the step $s_k$, so that $\cos\theta_k = \ip{-g_k}{s_k} = -\eta_k$, distinguishing it from the Cartesian heading angle $\Theta_k$.

\begin{figure}[t]
\centering
\includegraphics[width=0.88\textwidth]{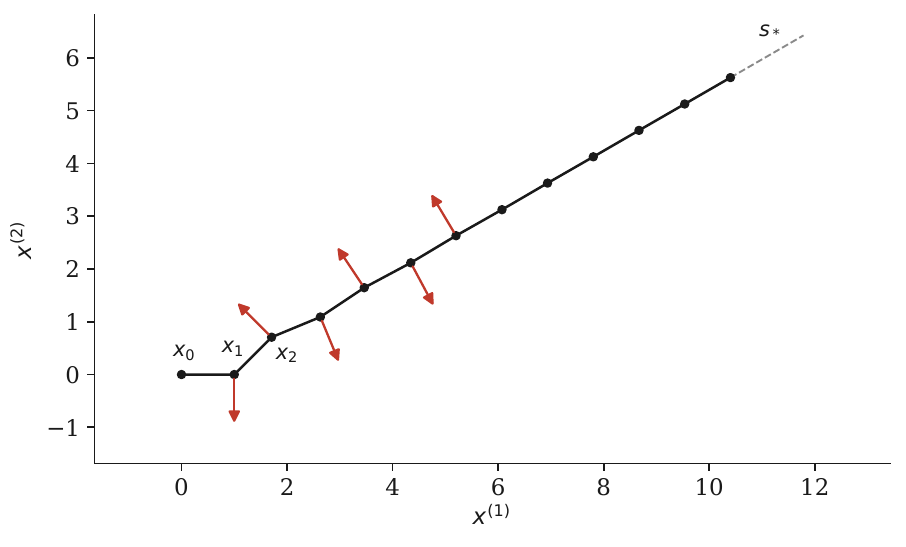}
\caption{The discrete orbit polyline $x_{k+1}=x_k+s_k$ for the first twelve iterations, illustrating the alternating and halving turns $\psi_{k+1}=-\frac12\psi_k$. Red arrows represent the unit gradient vectors $g_k = (-1)^k J s_{k-1}$ at vertices $x_1,\dots,x_6$, highlighting the exact line search orthogonality $\ip{g_{k+1}}{s_k}=0$ and transverse flapping. The dashed line marks the limiting ray with direction $s_* = (\frac{\sqrt3}{2}, \frac12)$.}
\label{fig:orbit}
\end{figure}

Geometrically, the trajectory alternates search directions while halving the turn angle at each step ($\psi_{k+1} = -\frac12\psi_k$), summing to the limiting direction $s_*$ (Figure~\ref{fig:orbit}). The secant decomposition and turn contraction are formalized next:

\begin{lemma}[Secant decomposition and turn contraction]
\label{lem:angles}
The trajectory \eqref{eq:orbit} and quantities \eqref{eq:derived} satisfy:
\begin{enumerate}
\item[(i)] \emph{Secant decomposition.} For every $k \ge 1$,
\begin{equation}
\label{eq:yk-geom}
y_k = (-1)^{k+1}J(s_k + s_{k-1}), \qquad \ip{y_k}{s_k + s_{k-1}} = 0.
\end{equation}
\item[(ii)] \emph{Turn contraction and step alignment.} For every $k \ge 1$, the signed turns alternate in sign and halve in magnitude:
\begin{equation}
\label{eq:turns}
\psi_k = (-1)^{k+1}\frac{\pi}{2^{k+1}}, \qquad \psi_{k+1} = -\tfrac12\psi_k.
\end{equation}
Moreover, the sum of consecutive search directions points along the next step:
\begin{equation}
\label{eq:bisect-vec}
s_k + s_{k-1} = 2\cos\left(\frac{\psi_k}{2}\right) s_{k+1}, \qquad \text{with } \cos\left(\frac{\psi_k}{2}\right) \ge \cos\frac{\pi}{8} > 0.
\end{equation}
\end{enumerate}
\end{lemma}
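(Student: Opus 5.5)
Everything reduces to direct computation from the closed form \eqref{eq:orbit}, and I will prove (ii) first because (i) uses the bisector identity. \textbf{Turns.} For $k\ge1$, $\psi_k=\Theta_k-\Theta_{k-1}=\frac{\pi}{3}\bigl[(-1)^{k+1}2^{-(k+1)}-(-1)^{k}2^{-k}\bigr]=\frac{\pi}{3}(-1)^{k+1}2^{-(k+1)}(1+2)=(-1)^{k+1}\pi/2^{k+1}$. The recursion $\psi_{k+1}=-\frac12\psi_k$ is then immediate.

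\textbf{Bisector identity.} Write $s_{k-1}=e(\Theta_{k-1})$ and $s_k=e(\Theta_k)$ with $e(\alpha)=(\cos\alpha,\sin\alpha)$. The sum-to-product formula gives $s_k+s_{k-1}=2\cos(\psi_k/2)\,e\bigl((\Theta_k+\Theta_{k-1})/2\bigr)$. It remains to check that the midpoint heading equals $\Theta_{k+1}$, i.e.\ $\Theta_{k+1}-\Theta_k=\psi_{k+1}=-\psi_k/2$. This holds because $\Theta_k-\psi_k/2=(\Theta_k+\Theta_{k-1})/2$. Finally $|\psi_k/2|\le\pi/8$ for $k\ge1$, so $\cos(\psi_k/2)\ge\cos(\pi/8)>0$.

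\textbf{Secant decomposition.} For $k\ge1$, the definitions give $g_{k+1}=(-1)^{k+1}Js_k$ and $g_k=(-1)^kJs_{k-1}$. Hence $y_k=(-1)^{k+1}Js_k-(-1)^kJs_{k-1}=(-1)^{k+1}J(s_k+s_{k-1})$. Orthogonality follows from $\ip{Ju}{u}=0$ applied with $u=s_k+s_{k-1}$.

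There is no real obstacle here. The only care needed is the sign and index bookkeeping in $\Theta_k$, in particular checking that the midpoint of $\Theta_{k-1}$ and $\Theta_k$ is exactly $\Theta_{k+1}$. The restriction to $k\ge1$ matters because $g_0=-s_0$ is defined separately from the $J$-formula.
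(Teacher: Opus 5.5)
Your proof is correct and follows the paper's argument essentially step for step: you subtract the closed-form headings to get $\psi_k=(-1)^{k+1}\pi/2^{k+1}$, apply sum-to-product with the midpoint identity $\Theta_{k+1}=\tfrac12(\Theta_{k-1}+\Theta_k)$, and obtain the secant decomposition by direct substitution together with $\ip{Ju}{u}=0$. One small remark: (i) does not actually depend on the bisector identity, since your own argument for it uses only the definitions of $g_k$ and $g_{k+1}$, so the reordering is harmless but unnecessary.
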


\begin{proof}
\emph{(i).} For $k \ge 1$, by \eqref{eq:orbit},
\[
y_k = g_{k+1} - g_k = (-1)^{k+1}Js_k - (-1)^kJs_{k-1} = (-1)^{k+1}J(s_k + s_{k-1}).
\]
Taking the inner product of both sides with $s_k + s_{k-1}$ gives $\ip{y_k}{s_k + s_{k-1}} = 0$.

\emph{(ii).} For $k \ge 1$, subtracting consecutive headings in \eqref{eq:orbit} gives
\[
\psi_k = \Theta_k - \Theta_{k-1} = (-1)^{k+1}\left(\frac{\pi}{3 \cdot 2^{k+1}} + \frac{\pi}{3 \cdot 2^k}\right) = (-1)^{k+1}\frac{\pi}{2^{k+1}},
\]
whence $\psi_{k+1} = -\tfrac12\psi_k$ and $\Theta_{k+1} = \frac12(\Theta_{k-1} + \Theta_k)$.
Therefore,
\begin{align*}
s_k + s_{k-1}
&= \bigl(\cos\Theta_k + \cos\Theta_{k-1},\; \sin\Theta_k + \sin\Theta_{k-1}\bigr)\\
&= 2\cos\left(\frac{\Theta_k - \Theta_{k-1}}{2}\right)\left(\cos\frac{\Theta_k + \Theta_{k-1}}{2},\; \sin\frac{\Theta_k + \Theta_{k-1}}{2}\right)\\
&= 2\cos\left(\frac{\psi_k}{2}\right) s_{k+1}.
\end{align*}
Since $|\psi_k| = \pi/2^{k+1} \le \pi/4$, we have $\cos(\psi_k/2) \ge \cos(\pi/8) > 0$.
\end{proof}

\subsection{Properties of the orbit and Class~\texorpdfstring{$\mathcal C$}{C} reproduction}

The next lemma collects the core properties of the orbit used throughout the paper:

\begin{lemma}[Properties of the orbit]
\label{lem:orbit}
For every $k \ge 0$, the sequence defined by \eqref{eq:orbit}--\eqref{eq:derived} satisfies:
\begin{enumerate}
\item[(i)] \emph{Exact steps:} $\ip{g_{k+1}}{s_k} = 0$.
\item[(ii)] \emph{Descent and curvature:}
\begin{equation}
\label{eq:costhetak}
\cos\theta_k = \ip{-g_k}{s_k} = -\eta_k = \sin\left(\frac{\pi}{2^{k+1}}\right) > 0;
\end{equation}
in particular $\eta_k < 0$, and
\begin{equation}
\label{eq:sigmak}
\sigma_k = \ip{s_k}{y_k} = -\eta_k = \cos\theta_k > 0.
\end{equation}
\item[(iii)] \emph{Conjugacy and descent of the next step:} $\ip{s_{k+1}}{y_k} = 0$ and $\ip{s_{k+1}}{g_{k+1}} < 0$; that is, $d = s_{k+1}$ satisfies the two Class~$\mathcal C$ requirements \eqref{eq:classC} at index $k$.
\end{enumerate}
\end{lemma}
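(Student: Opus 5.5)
The plan is to verify each item directly from the closed-form definitions \eqref{eq:orbit}, using only the identities $\ip{Ju}{u}=0$ and $\ip{Ju}{Jv}=\ip{u}{v}$ together with Lemma~\ref{lem:angles}. The case $k=0$, where $g_0=-s_0$ is not of the form $\pm Js_{-1}$, will be handled separately in each item.

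\emph{(i).} Since $g_{k+1}=(-1)^{k+1}Js_k$, we get $\ip{g_{k+1}}{s_k}=(-1)^{k+1}\ip{Js_k}{s_k}=0$ immediately.

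\emph{(ii).} For $k=0$, $\eta_0=\ip{-s_0}{s_0}=-1=-\sin(\pi/2)$, which matches \eqref{eq:costhetak}. For $k\ge1$,
\[
\eta_k=(-1)^k\ip{Js_{k-1}}{s_k}.
\]
With $s_{k-1}=(\cos\Theta_{k-1},\sin\Theta_{k-1})$, we have $Js_{k-1}=(-\sin\Theta_{k-1},\cos\Theta_{k-1})$, so
\[
\ip{Js_{k-1}}{s_k}=\sin(\Theta_k-\Theta_{k-1})=\sin\psi_k .
\]
Lemma~\ref{lem:angles}(ii) gives $\psi_k=(-1)^{k+1}\pi/2^{k+1}$, hence $\sin\psi_k=(-1)^{k+1}\sin(\pi/2^{k+1})$ because sine is odd. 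Therefore
\[
\eta_k=(-1)^{k}(-1)^{k+1}\sin(\pi/2^{k+1})=-\sin(\pi/2^{k+1})<0,
\]
since $0<\pi/2^{k+1}\le\pi/2$. For $\sigma_k$, write $\sigma_k=\ip{s_k}{g_{k+1}}-\eta_k$; the first term vanishes by (i), which gives $\sigma_k=-\eta_k=\cos\theta_k>0$.

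\emph{(iii).} Descent follows from
\[
\ip{s_{k+1}}{g_{k+1}}=\eta_{k+1}=-\sin(\pi/2^{k+2})<0,
\]
which is part (ii) applied at index $k+1$.

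For conjugacy with $k\ge1$, Lemma~\ref{lem:angles} supplies $s_k+s_{k-1}=2\cos(\psi_k/2)\,s_{k+1}$ with a nonzero coefficient, and also $\ip{y_k}{s_k+s_{k-1}}=0$. Dividing by the coefficient gives $\ip{y_k}{s_{k+1}}=0$.

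The case $k=0$ is the only spot needing direct computation. Here $y_0=g_1-g_0=-Js_0+s_0$. We have $\Theta_0=\pi/6-\pi/6=0$, so $s_0=(1,0)$, and $\Theta_1=\pi/6+\pi/12=\pi/4$. Hence
\[
y_0=(1,0)-(0,1)=(1,-1),\qquad s_1=\bigl(\tfrac{\sqrt2}{2},\tfrac{\sqrt2}{2}\bigr),
\]
so $\ip{y_0}{s_1}=0$. This small base case is the main (minor) obstacle, because the secant decomposition of Lemma~\ref{lem:angles}(i) only applies for $k\ge1$. Everything else is a routine sign check of the factors $(-1)^k$.
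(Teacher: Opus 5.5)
Your proposal is correct and follows essentially the same route as the paper. Part (i) uses $\ip{Js_k}{s_k}=0$. Part (ii) treats $k=0$ separately and uses $\ip{Js_{k-1}}{s_k}=\sin\psi_k$ for $k\ge1$. Part (iii) gets descent from (ii) at index $k+1$, and conjugacy from the step-alignment identity of Lemma~\ref{lem:angles} for $k\ge1$, with a direct check of $\ip{s_1}{y_0}=0$ at $k=0$.
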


\begin{proof}
\emph{(i).} By \eqref{eq:orbit},
$\ip{g_{k+1}}{s_k} = (-1)^{k+1}\ip{Js_k}{s_k} = 0$.

\emph{(ii).} Both $-g_k$ and $s_k$ are unit vectors, so $\cos\theta_k = \ip{-g_k}{s_k}$.
For $k = 0$, $\cos\theta_0 = \ip{-g_0}{s_0} = \ip{s_0}{s_0} = 1 = \sin(\pi/2)$.
For $k \ge 1$, $g_k = (-1)^k J s_{k-1}$, so by \eqref{eq:turns}, using the polar representations of $s_k, s_{k-1}$,
\begin{align*}
\cos\theta_k = \ip{-g_k}{s_k}
&= (-1)^{k+1}\ip{Js_{k-1}}{s_k} = (-1)^{k+1}\sin(\Theta_k - \Theta_{k-1})\\
&= (-1)^{k+1}\sin\psi_k = \sin\left(\frac{\pi}{2^{k+1}}\right),
\end{align*}
which is positive because $0 < \pi/2^{k+1} \le \pi/2$. Hence $\eta_k = \ip{g_k}{s_k} = -\cos\theta_k < 0$ for all $k \ge 0$.
By (i), $\sigma_k = \ip{s_k}{g_{k+1}} - \ip{s_k}{g_k} = 0 - \eta_k = \cos\theta_k > 0$.

\emph{(iii).} Strict descent is item (ii) read at index $k+1$: $\ip{s_{k+1}}{g_{k+1}} = \eta_{k+1} = -\sin(\pi/2^{k+2}) < 0$.

For conjugacy: at $k = 0$, \eqref{eq:orbit} gives $s_1 = (1/\sqrt2, 1/\sqrt2)$ and $y_0 = g_1 - g_0 = (1, -1)$, giving $\ip{s_1}{y_0} = 0$ directly.
For $k \ge 1$, by the step alignment formula \eqref{eq:bisect-vec} of Lemma~\ref{lem:angles},
\[
s_{k+1} = \frac{1}{2\cos(\psi_k/2)}(s_k + s_{k-1}).
\]
Because $\ip{y_k}{s_k + s_{k-1}} = 0$ by \eqref{eq:yk-geom}, it follows immediately that
\[
\ip{s_{k+1}}{y_k} = \frac{1}{2\cos(\psi_k/2)}\ip{y_k}{s_k + s_{k-1}} = 0.
\]
\end{proof}

It remains to explain in what sense the orbit is \emph{forced} on a method of class $\mathcal C$. The statement below is conditional and concerns a single step; the induction that turns it into a statement about entire trajectories needs the line search to return the designed steps, and is therefore deferred to the proof of Theorem~\ref{thm:main}.

\begin{lemma}[Heading lock]
\label{lem:heading}
Fix $k \ge 0$, and let a method of class $\mathcal C$ be applied to a function $f$ with $\nabla f(x_j) = g_j$ for $0 \le j \le k+1$. Assume that the method has reproduced the orbit through index $k$: it has visited the vertices $x_0, \dots, x_{k+1}$ of \eqref{eq:orbit}, using search directions $d_j \in \R_{>0}\,s_j$ for $0 \le j \le k$. Then the step just taken satisfies $\ip{\nabla f(x_{k+1})}{d_k} = 0$, the condition that triggers \eqref{eq:classC}, and the next direction produced by the method satisfies
\[
d_{k+1} \in \R_{>0}\,s_{k+1}.
\]
In particular, all methods of class $\mathcal C$ agree on the heading $s_{k+1}$; they may differ only in the length $\|d_{k+1}\|$.
\end{lemma}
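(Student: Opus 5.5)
The proof is a direct chain of the earlier lemmas; I would carry it out in three short steps.

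\emph{Step 1: the trigger condition.} By hypothesis $d_k = \lambda s_k$ for some $\lambda>0$. Since $\nabla f(x_{k+1}) = g_{k+1}$, Lemma~\ref{lem:orbit}(i) gives
\[
\ip{\nabla f(x_{k+1})}{d_k} = \lambda\ip{g_{k+1}}{s_k} = 0 .
\]
So the exactness condition that activates \eqref{eq:classC} holds. The next direction $d_{k+1}$ of any Class~$\mathcal C$ method therefore satisfies $\ip{d_{k+1}}{y_k}=0$ and $\ip{d_{k+1}}{g_{k+1}}<0$.

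Here $y_k = \nabla f(x_{k+1})-\nabla f(x_k) = g_{k+1}-g_k$ is exactly the quantity of \eqref{eq:derived}, because the gradients at the visited vertices coincide with the orbit gradients. This identification is the only place the hypothesis $\nabla f(x_j)=g_j$ enters.

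\emph{Step 2: the hypotheses of Lemma~\ref{lem:unique}.} I need $y_k\neq0$ and $y_k\not\parallel g_{k+1}$. Using \eqref{eq:sigmak}, $\ip{s_k}{y_k}=\sigma_k>0$, so $y_k\neq 0$. For non-parallelism, suppose $y_k=\mu g_{k+1}$. Then $\ip{s_k}{y_k} = \mu\ip{s_k}{g_{k+1}} = 0$ by Lemma~\ref{lem:orbit}(i), which contradicts $\sigma_k>0$. This short argument avoids any case split between $k=0$ and $k\ge1$.

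\emph{Step 3: identify the ray.} Lemma~\ref{lem:unique} now says there is a unique open ray of directions satisfying \eqref{eq:classC}. Lemma~\ref{lem:orbit}(iii) shows that $s_{k+1}$ satisfies both conditions. Both conditions are invariant under positive scaling, so that unique ray is $\R_{>0}s_{k+1}$, and hence $d_{k+1}\in\R_{>0}s_{k+1}$.

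The final clause of the statement follows immediately. Every member of $\mathcal C$ lands on the same ray, and only the scalar multiple, that is $\|d_{k+1}\|$, is left unspecified.

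I do not expect a real obstacle; the argument is bookkeeping. The only delicate point is Step 2, the non-degeneracy check $y_k\not\parallel g_{k+1}$. One might try to verify it from the explicit formula \eqref{eq:yk-geom}, but deriving it from $\sigma_k>0$ together with exactness is cleaner and works uniformly in $k$.
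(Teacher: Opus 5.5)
Your proposal is correct and follows the paper's proof essentially step for step: the trigger via Lemma~\ref{lem:orbit}(i), then the non-degeneracy $y_k\neq0$ and $y_k\not\parallel g_{k+1}$ derived from $\sigma_k>0$ together with $\ip{s_k}{g_{k+1}}=0$, then identification of the unique ray of Lemma~\ref{lem:unique} via Lemma~\ref{lem:orbit}(iii). Your remark that the method's $y_k$ coincides with the orbit's $y_k$ because $\nabla f(x_j)=g_j$ matches the paper's closing observation that the data entering \eqref{eq:classC} are determined by the orbit alone.
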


\begin{proof}
Write $d_k = \lambda s_k$ with $\lambda > 0$. By Lemma~\ref{lem:orbit}(i),
\[
\ip{\nabla f(x_{k+1})}{d_k} = \lambda \ip{g_{k+1}}{s_k} = 0,
\]
so the defining orthogonality condition of class $\mathcal C$ is satisfied at index $k$, and the method must return a direction with $\ip{d_{k+1}}{y_k} = 0$ and $\ip{d_{k+1}}{g_{k+1}} < 0$.
By Lemma~\ref{lem:orbit}(ii), $\sigma_k = \ip{s_k}{y_k} > 0$, so $y_k \ne 0$. Moreover, $y_k \not\parallel g_{k+1}$ since $\ip{s_k}{y_k} > 0$ while $\ip{s_k}{g_{k+1}} = 0$.
Hence the first condition is a genuine linear constraint distinct from $g_{k+1}^\perp$; Lemma~\ref{lem:unique} therefore applies and the two conditions together determine a unique ray in $\R^2$. By Lemma~\ref{lem:orbit}(iii), $s_{k+1}$ satisfies both conditions, so that ray is $\R_{>0}\,s_{k+1}$, and $d_{k+1}$ lies on it. The data $g_{k+1}$ and $y_k$ entering \eqref{eq:classC} are determined by the orbit alone, so the conclusion is the same for every method of the class.
\end{proof}

\subsection{Polyline geometry and non-self-intersection}
\label{sec:geom}

If the discrete trajectory crossed itself, any single-valued objective function would be forced to carry two incompatible gradient prescriptions at the crossing point, making it impossible for the orbit to be realized as the trajectory of any $C^1$ function at all.
Simplicity of the polyline $P$ is therefore an essential feasibility condition for the construction to exist.
This subsection shows that $P$ avoids itself with a quantitative margin: non-adjacent segments stay at least $1$ apart, matching the step length itself.
To state this precisely, call $[x_k, x_{k+1}]$ \emph{segment $k$}; call segments $k$ and $j$ \emph{non-adjacent} when they share no endpoint ($|k-j| \ge 2$); and call the integer $|k-j|$ their \emph{index gap}.

From \eqref{eq:orbit}, every heading satisfies $|\Theta_k - \pi/6| = \frac{\pi}{3 \cdot 2^{k+1}} \le \frac{\pi}{6}$, so all headings remain confined to the acute cone $\Theta_k \in [0, \pi/3]$ for all $k \ge 0$.
Consequently, the angle between any two search directions satisfies $|\Theta_i - \Theta_j| \le \pi/3$, yielding the acute-cone property:
\begin{equation}
\label{eq:acute-cone}
\ip{s_i}{s_j} = \cos(\Theta_i - \Theta_j) \ge \cos\left(\frac{\pi}{3}\right) = \frac12 > 0 \qquad \text{for all } i, j \ge 0.
\end{equation}

\begin{lemma}[Segment separation]
\label{lem:geom}
Every pair of non-adjacent segments of the polyline $P = \bigcup_{k\ge0}[x_k, x_{k+1}]$ has Euclidean distance at least $1$.
\end{lemma}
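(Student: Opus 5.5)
The plan is to use the acute-cone property \eqref{eq:acute-cone} to project everything onto a single fixed direction, which makes the polyline monotone along that direction, and then to handle small index gaps by direct estimates. Take the projection direction to be $s_*$. Every heading satisfies $|\Theta_k-\pi/6|\le\pi/6$, so $\ip{s_k}{s_*}=\cos(\Theta_k-\pi/6)\ge\cos(\pi/6)=\sqrt3/2$. Hence $h(x):=\ip{x}{s_*}$ is strictly increasing along the polyline, gaining at least $\sqrt3/2$ per segment. For segments $k<j$ with $j-k\ge2$, every point $p\in[x_k,x_{k+1}]$ and $q\in[x_j,x_{j+1}]$ satisfies
\[
\|q-p\|\ \ge\ h(q)-h(p)\ \ge\ h(x_j)-h(x_{k+1})\ \ge\ (j-k-1)\frac{\sqrt3}{2}.
\]
This settles every index gap $j-k\ge3$, since $2\cdot\sqrt3/2=\sqrt3>1$.

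The only remaining case is index gap exactly $2$: segments $k$ and $k+2$, joined through segment $k+1$. Here the projection bound gives only $\sqrt3/2<1$, so a sharper argument is needed. Note that the distance between two segments is attained between an endpoint of one segment and the other segment (or is $0$, which is impossible since $h$ separates them). It therefore suffices to bound the distance from each endpoint to the opposite segment.

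For these endpoint-to-segment distances, use the chain $x_k\to x_{k+1}\to x_{k+2}\to x_{k+3}$ with unit steps $s_k,s_{k+1},s_{k+2}$ whose pairwise angles are at most $\pi/3$. Replace $s_*$ by the better projection direction $s_{k+1}$, the middle step. Since $\ip{s_k}{s_{k+1}}=\cos\psi_{k+1}$ and $\ip{s_{k+2}}{s_{k+1}}=\cos\psi_{k+2}$, with $|\psi_{k+1}|\le\pi/4$ and $|\psi_{k+2}|\le\pi/8$ for $k\ge0$ by \eqref{eq:turns}, the strip-gap argument along $s_{k+1}$ gives
\[
\|q-p\|\ \ge\ \ip{x_{k+2}-x_{k+1}}{s_{k+1}}=\|s_{k+1}\|^2=1
\]
for all $p\in[x_k,x_{k+1}]$ and $q\in[x_{k+2},x_{k+3}]$. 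This works because $\ip{\cdot}{s_{k+1}}$ is nondecreasing along all three segments: the relevant inner products $\cos\psi$ are positive. So projecting onto the middle segment's direction yields exactly the bound $1$, and this one argument actually covers all gaps.

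In fact the same idea handles general gaps uniformly. For $j\ge k+2$, project onto $s_{k+1}$. Every step $s_i$ satisfies $\ip{s_i}{s_{k+1}}\ge1/2>0$ by \eqref{eq:acute-cone}, so $\ip{x}{s_{k+1}}$ is nondecreasing along the entire polyline. The intermediate segment $k+1$ contributes exactly $1$, so
\[
\|q-p\|\ \ge\ \ip{q-p}{s_{k+1}}\ \ge\ \ip{x_{k+2}-x_{k+1}}{s_{k+1}}=1.
\]
I would present this single argument as the proof: project onto the direction of any intermediate segment and use acute-cone monotonicity. The main point to check carefully, which I regard as the only real obstacle, is that monotonicity holds on both outer segments with the correct orientation. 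This follows from \eqref{eq:acute-cone}, which holds for all pairs $i,j$.
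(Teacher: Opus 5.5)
Your proposal is correct and matches the paper's proof: the paper likewise writes $z_2 - z_1 = a s_k + s_{k+1} + \sum_{m=2}^{g-1} s_{k+m} + b s_{k+g}$, pairs it with the middle direction $s_{k+1}$, and uses the acute-cone bound \eqref{eq:acute-cone} to make every term nonnegative, so the $s_{k+1}$ term alone gives the bound $1$. Your preliminary projection onto $s_*$ and the separate index-gap-$2$ analysis are valid but unnecessary, since the final single argument covers every gap.
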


\begin{proof}
Let $k \ge 0$ and $g \ge 2$.
Any point $z_1$ on segment $k$ and $z_2$ on segment $k+g$ can be written as
\[
z_1 = x_{k+1} - as_k, \qquad z_2 = x_{k+g} + bs_{k+g}
\]
for some $a, b \in [0, 1]$.
Since $x_{k+g} - x_{k+1} = \sum_{m=1}^{g-1} s_{k+m}$, the displacement vector is
\[
z_2 - z_1 = as_k + s_{k+1} + \sum_{m=2}^{g-1} s_{k+m} + bs_{k+g}
\]
(where the sum over $m$ is empty when $g = 2$).
Every coefficient in this expansion is non-negative, and the coefficient of $s_{k+1}$ is $1$.
Pairing with the unit vector $s_{k+1}$ and applying \eqref{eq:acute-cone} to each term,
\[
\ip{z_2 - z_1}{s_{k+1}} = 1 + a\ip{s_k}{s_{k+1}} + \sum_{m=2}^{g-1} \ip{s_{k+m}}{s_{k+1}} + b\ip{s_{k+g}}{s_{k+1}} \ge 1.
\]
By the Cauchy--Schwarz inequality,
\[
\|z_2 - z_1\| \ge \ip{z_2 - z_1}{s_{k+1}} \ge 1.
\]
Thus, every pair of non-adjacent segments has Euclidean distance $\ge 1$.
\end{proof}

\begin{corollary}[Non-self-intersection]
\label{cor:nonself}
The polyline $P = \bigcup_{k\ge0}[x_k, x_{k+1}]$ does not self-intersect; that is, distinct segments intersect if and only if they are adjacent, in which case they meet only at their common vertex.
\end{corollary}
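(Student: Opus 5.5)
The corollary should follow almost immediately from Lemma~\ref{lem:geom}, plus a short check for adjacent segments. I would split into the two cases allowed by the statement, non-adjacent pairs and adjacent pairs, after first recording that every segment is non-degenerate, since $\|s_k\|=1$ gives $x_{k+1}\neq x_k$.

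\emph{Non-adjacent segments} ($|k-j|\ge 2$). By Lemma~\ref{lem:geom} their Euclidean distance is at least $1>0$. They are compact sets, so they are disjoint. This gives the ``only if'' direction: if two distinct segments intersect, they must be adjacent.

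\emph{Adjacent segments} $[x_k,x_{k+1}]$ and $[x_{k+1},x_{k+2}]$. They share the vertex $x_{k+1}$, so they intersect; this gives the ``if'' direction. It remains to show they meet only at $x_{k+1}$. Parametrize the two segments as
\[
z_1=x_{k+1}-a\,s_k, \qquad z_2=x_{k+1}+b\,s_{k+1}, \qquad a,b\in[0,1].
\]
If $z_1=z_2$, then $a\,s_k+b\,s_{k+1}=0$. Pairing this with $s_{k+1}$ and using the acute-cone bound \eqref{eq:acute-cone}, $\ip{s_k}{s_{k+1}}\ge\frac12$, gives
\[
0=a\ip{s_k}{s_{k+1}}+b \ge \tfrac{a}{2}+b .
\]
Since $a,b\ge 0$, this forces $a=b=0$, so the only common point is $x_{k+1}$. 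In words, the two unit directions are not antiparallel, so the segments do not fold back onto each other.

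I do not expect a real obstacle here. The only point needing care is the adjacent case, which Lemma~\ref{lem:geom} does not cover. There the positive inner product from \eqref{eq:acute-cone} is what excludes overlap along a common line.
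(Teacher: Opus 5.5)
Your proof is correct and follows the paper's structure: the non-adjacent case is exactly Lemma~\ref{lem:geom}, and only the adjacent case differs. There the paper argues that $0<|\psi_{k+1}|\le\pi/4<\pi$ places the two segments on distinct non-parallel lines meeting in a single point, whereas you use the acute-cone bound $\ip{s_k}{s_{k+1}}\ge\frac12$ to rule out $a\,s_k+b\,s_{k+1}=0$ with $(a,b)\ne(0,0)$, $a,b\ge0$. Your variant is slightly more robust: it needs only that consecutive directions are not antiparallel, so it would still work for a zero turn, while the paper's argument needs the turn to be strictly nonzero.
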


\begin{proof}
Non-adjacent segments satisfy $\|z_1 - z_2\| \ge 1 > 0$ by Lemma~\ref{lem:geom}, and so do not intersect.
For adjacent segments $[x_k, x_{k+1}]$ and $[x_{k+1}, x_{k+2}]$, the turn angle between their directions satisfies $0 < |\psi_{k+1}| = \pi/2^{k+2} \le \pi/4 < \pi$ by \eqref{eq:turns}, so the two segments lie on distinct, non-parallel lines in $\R^2$.
Because two non-parallel lines intersect at exactly one point, the adjacent segments intersect only at their common endpoint $x_{k+1}$.
\end{proof}

While the subsequent domain assembly relies formally on the quantitative separation of strips and disks (Lemma~\ref{lem:seam}), Corollary~\ref{cor:nonself} establishes why the construction is feasible: the polyline is simple, so the discrete gradient assignments along distinct steps never conflict.

\subsection{Geometry of the tubular neighborhood}
\label{sec:orbit-tube}

Just as Section~\ref{sec:geom} established that the one-dimensional polyline $P$ does not self-intersect, we now construct a thickened two-dimensional domain $\mathcal U \subset \R^2$ around $P$ and verify its geometric consistency.
Let $\delta$ be a scale parameter satisfying
\begin{equation}
\label{eq:delta}
0 < \delta \le \frac{1}{20}.
\end{equation}
The width $\delta$ is a free parameter; all geometric separation bounds in this section hold uniformly for every $\delta \in (0, 1/20]$, and $\delta$ will be calibrated later in Section~\ref{sec:ls} to satisfy the line-search criteria.

Along segment $k$, let $\nu_k = J s_k$ be the unit normal to $s_k$, so that $\{s_k, \nu_k\}$ forms an orthonormal basis of $\R^2$.
In the associated Fermi coordinates $(\tau, n)$ centered at $x_k$, each point $x$ is represented as
\begin{equation}
\label{eq:fermi}
x = x_k + \tau s_k + n \nu_k, \qquad \tau = \ip{x-x_k}{s_k}, \quad n = \ip{x-x_k}{\nu_k}.
\end{equation}
The segment strip is defined by $\mathcal S_k = \{x_k + \tau s_k + n\nu_k : \tau \in [0, 1],\, |n| < \delta\}$ for $k \ge 1$.
To place the initial vertex $x_0$ safely in the interior of the domain, the initial strip $\mathcal S_0$ is extended backward by $\delta$ along $-s_0$:
\begin{equation}
\label{eq:S0}
\mathcal S_0 = \{x_0 + \tau s_0 + n\nu_0 : \tau \in (-\delta, 1],\, |n| < \delta\}.
\end{equation}
It is convenient to name the \emph{axis} of each strip: $\ell_j = [x_j, x_{j+1}]$ for $j \ge 1$, and $\ell_0 = [x_0 - \delta s_0, x_1]$ for the backward-extended initial strip.
Every point of $\mathcal S_j$ then lies at distance $|n_j| < \delta$ from $\ell_j$, for every $j \ge 0$.

To prepare for the smooth interpolation in Section~\ref{sec:f} (where the objective function will be governed by isotropic Taylor quadratics on vertex disks and collars, and by a strip template along the middle tubes), we describe the constituent pieces of the tubular neighborhood $\mathcal U$ (identifying points on each strip $\mathcal S_k$ with their Fermi coordinates $(\tau, n)$; see Figure~\ref{fig:tube}):
\begin{enumerate}
\item \emph{Vertex disks:} $\mathcal B_k = B(x_k, \delta)$ for each $k \ge 1$. (No vertex disk is needed at $x_0$, which lies inside the backward extension of $\mathcal S_0$.)
\item \emph{Middle tubes:} On each segment $k \ge 1$, $\mathcal T_k = (\delta, 1-\delta) \times (-\delta, \delta)$. For segment $0$, the extended middle tube is $\mathcal T_0 = (-\delta, 1-\delta) \times (-\delta, \delta)$.
\item \emph{Collars:} On each segment $k \ge 0$, the strip $\mathcal S_k$ contains an outgoing (or left) collar $[0, \delta] \times (-\delta, \delta)$ at $x_k$ and an incoming (or right) collar $[1-\delta, 1] \times (-\delta, \delta)$ at $x_{k+1}$.
At each vertex $x_k$ ($k \ge 1$), the meeting collar region $\mathcal C_k \subset \R^2$ connects the incoming collar of segment $k-1$ and the outgoing collar of segment $k$:
\begin{equation}
\label{eq:Ckdef}
\mathcal C_k = \{p \in \mathcal S_{k-1} : \tau_{k-1}(p) \ge 1-\delta\} \cup \{p \in \mathcal S_k : \tau_k(p) \le \delta\}.
\end{equation}
\end{enumerate}
The total tubular neighborhood of the polyline is the open domain
\begin{equation}
\label{eq:Udef}
\mathcal U = \bigcup_{k\ge 1}\mathcal B_k \cup \bigcup_{k\ge 0}\mathcal S_k = \bigcup_{k\ge 1}\mathcal B_k \cup \bigcup_{k\ge 0}\mathcal T_k \cup \bigcup_{k\ge 1}\mathcal C_k \subset \R^2.
\end{equation}
Each strip and each disk surrounds a set of vertices of $P$ (the strip $\mathcal S_j$ surrounds $x_j$ and $x_{j+1}$, the disk $\mathcal B_k$ surrounds $x_k$), and we call two of them \emph{adjacent} when these vertex sets meet, and \emph{non-adjacent} otherwise.

\begin{figure}[t]
\centering
\includegraphics[width=0.88\textwidth]{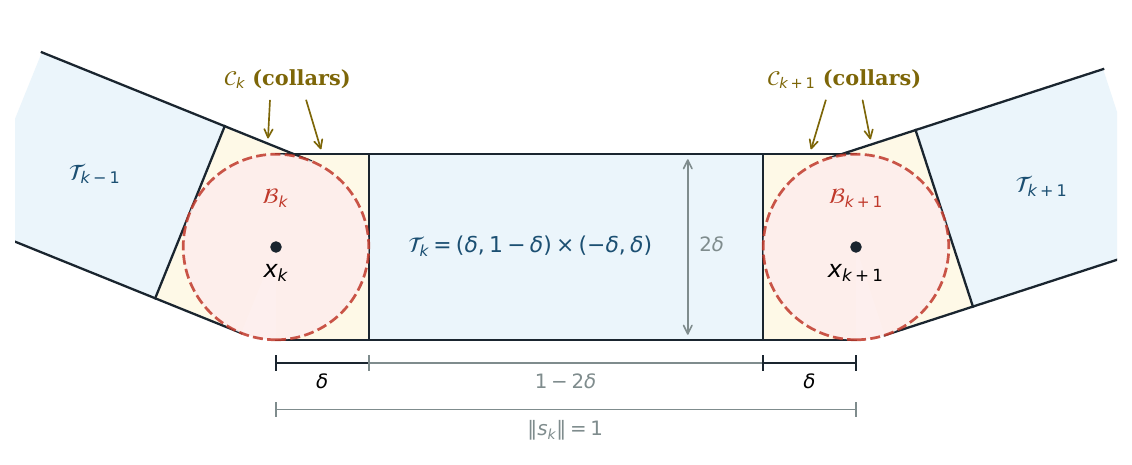}
\caption{Decomposition of the tubular neighborhood $\mathcal U$ along segment $k$: vertex disks $\mathcal B_k, \mathcal B_{k+1}$ of radius $\delta$, meeting collar pairs $\mathcal C_k, \mathcal C_{k+1}$, and the middle tube $\mathcal T_k = (\delta, 1-\delta) \times (-\delta, \delta)$. Adjacent strips $\mathcal S_{k-1}$ and $\mathcal S_k$ overlap exclusively within the collar regions, where both strip templates reduce identically to the vertex Taylor quadratic $Q_k$ (Lemma~\ref{lem:seam}).}
\label{fig:tube}
\end{figure}

\begin{lemma}[Overlaps of strips and disks]
\label{lem:seam}
Let $0 < \delta \le 1/20$.
\begin{enumerate}
\item[(i)] \emph{Non-adjacent strips and disks are separated.}
Non-adjacent strips and disks have disjoint closures:
\[
\overline{\mathcal S_i} \cap \overline{\mathcal S_j} = \emptyset \ \ (|i-j| \ge 2), \qquad
\overline{\mathcal B_k} \cap \overline{\mathcal S_j} = \emptyset \ \ (j \notin \{k-1, k\}), \qquad
\overline{\mathcal B_k} \cap \overline{\mathcal B_j} = \emptyset \ \ (j \ne k).
\]
Moreover every point of $\R^2$ has a neighborhood meeting at most two strips (necessarily consecutive) and at most one vertex disk.
\item[(ii)] \emph{Overlaps at a corner lie in the collar.}
For every $k \ge 1$, all overlaps among the adjacent strips $\mathcal S_{k-1}, \mathcal S_k$ and the vertex disk $\mathcal B_k$ lie in the open Fermi wedge, and hence in the collar $\mathcal C_k$:
\begin{equation}
\label{eq:cornerincl}
(\mathcal S_{k-1} \cap \mathcal S_k) \cup \bigl(\mathcal B_k \cap (\mathcal S_{k-1} \cup \mathcal S_k)\bigr) \subset \{\tau_{k-1}>1-\delta\}\cap\{\tau_k<\delta\}.
\end{equation}
Moreover, restricting $\mathcal C_k$ to either strip recovers precisely that strip's own collar:
\begin{equation}
\label{eq:Ckrest}
\mathcal C_k \cap \mathcal S_{k-1} = \{p \in \mathcal S_{k-1} : \tau_{k-1}(p) \ge 1-\delta\}, \qquad
\mathcal C_k \cap \mathcal S_k = \{p \in \mathcal S_k : \tau_k(p) \le \delta\}.
\end{equation}
\end{enumerate}
\end{lemma}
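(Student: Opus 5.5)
Part (i) reduces to Lemma~\ref{lem:geom}, with everything inflated by $\delta$. Every point of $\overline{\mathcal S_j}$ lies within $\delta$ of the axis $\ell_j$, and every point of $\overline{\mathcal B_k}$ lies within $\delta$ of $x_k$. For $|i-j|\ge2$ with $i,j\ge1$ the axes are non-adjacent segments, so they are at distance $\ge1$ and the closures are at distance $\ge 1-2\delta\ge 9/10>0$. The only change needed for $\ell_0$ is the backward extension $x_0-\delta s_0$. I would redo the projection argument of Lemma~\ref{lem:geom} with $a\in[0,1+\delta]$, where nonnegativity of the coefficients is still what matters. This gives $\operatorname{dist}(\ell_0,\ell_j)\ge1$ for $j\ge2$, and closure distance $\ge1-2\delta>0$.

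For $\overline{\mathcal B_k}\cap\overline{\mathcal S_j}$ with $j\notin\{k-1,k\}$, the vertex $x_k$ is an endpoint of segment $k$ (or $k-1$), which is non-adjacent to segment $j$ unless $j=k+1$ or $j=k-2$. In those two remaining cases $x_k$ is one step away from the axis. Projecting onto $s_{k}$ (respectively $s_{k-1}$) as in Lemma~\ref{lem:geom} gives distance $\ge1$ from $x_k$ to $\ell_j$, hence separation $\ge1-2\delta$. For disks, $\|x_k-x_j\|\ge \ip{x_j-x_k}{s_{k}}\ge1$ for $j>k$ by the acute-cone bound \eqref{eq:acute-cone}, so the disks are separated since $2\delta<1$. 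The local-finiteness claim then follows because the separations are uniform, at least $1-2\delta$. A ball of radius $(1-2\delta)/3$ around any point can meet only strips with pairwise index gap $\le1$, hence at most two consecutive strips, and at most one disk.

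For (ii) I would work in the Fermi coordinates of $\mathcal S_k$ centered at $x_k$. Take $p\in\mathcal S_{k-1}\cap\mathcal S_k$, and write $p=x_k+\tau s_k+n\nu_k$ with $|n|<\delta$, $\tau\in[0,1]$. In the coordinates of segment $k-1$, $p=x_k+(\tau_{k-1}-1)s_{k-1}+n'\nu_{k-1}$ with $|n'|<\delta$. Let $\psi=\psi_k$ with $|\psi|\le\pi/4$ (for $k=1$, $\psi_1=\pi/4$ directly). Then
\[
\tau=(\tau_{k-1}-1)\cos\psi+n'\sin(\pm\psi)\le |n'|\,|\sin\psi|<\delta,
\]
since $\tau_{k-1}\le1$. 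Symmetrically, $\tau_{k-1}-1=\tau\cos\psi \mp n\sin\psi\ge -\delta|\sin\psi|>-\delta$, since $\tau\ge0$. Both strict inequalities therefore hold on the wedge. For the disk, $\|p-x_k\|<\delta$ gives $|\tau_k|<\delta$ and $|\tau_{k-1}-1|<\delta$ immediately. For the strict inequality, I would note that $p$ must also lie in the strip: if $p\in\mathcal S_k$, then $\tau_k\in[0,\delta)$ directly. The remaining issue is the other coordinate $\tau_{k-1}>1-\delta$, and that comes from $|p-x_k|<\delta$. The case $p\in\mathcal S_{k-1}$ is symmetric. The restriction identities \eqref{eq:Ckrest} are almost definitional. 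For the first, $\mathcal C_k\cap\mathcal S_{k-1}$ consists of the $\mathcal S_{k-1}$-collar together with points of $\mathcal S_{k-1}\cap\{p\in\mathcal S_k:\tau_k\le\delta\}$, and by \eqref{eq:cornerincl} those extra points already satisfy $\tau_{k-1}>1-\delta$. The second is symmetric.

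The main obstacle I expect is the bookkeeping in (i) near the index-$0$ extension and the cases $j=k\pm$ small. The sign conventions for $\psi$ in the Fermi change of coordinates in (ii) also need care. I would first verify the rotation formula $\tau_k=(\tau_{k-1}-1)\cos\psi_k+n_{k-1}\sin\psi_k$ (up to sign) explicitly using $J$.
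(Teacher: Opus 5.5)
Your proof is correct and follows essentially the paper's route. In (i) you inflate the axis separation from Lemma~\ref{lem:geom} by the tube width, rerunning the projection on the extended axis $\ell_0$ where the paper uses the unextended segment with a $\sqrt2\,\delta$ margin. Your extra projection for $j=k+1$ and $j=k-2$ is harmless but unnecessary: $x_k$ also lies on its other incident segment, which is non-adjacent to segment $j$.

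In (ii), your formula $\tau_k=(\tau_{k-1}-1)\cos\psi_k+n_{k-1}\sin\psi_k$ is a coordinate version of the paper's closest-point bound $\tau_k(p)\le\operatorname{dist}(p,\ell_{k-1})$, which needs only $\ip{s_{k-1}}{s_k}>0$ and Cauchy--Schwarz. For the disk, $|\tau_k(p)|$ and $|\tau_{k-1}(p)-1|$ are both at most $\|p-x_k\|<\delta$, so both strict inequalities are immediate and your further case discussion there is redundant.
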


\begin{proof}
\emph{(i).}
Explicitly, two of these sets are non-adjacent when they are strips $\mathcal S_i, \mathcal S_j$ with $|i-j| \ge 2$, a disk $\mathcal B_k$ and a strip $\mathcal S_j$ with $j \notin \{k-1, k\}$, or two distinct disks.
Each of them lies within distance $\sqrt2\,\delta$ of its \emph{generator}: the unextended segment $[x_j, x_{j+1}]$ for the strip $\mathcal S_j$, and the vertex $x_k$ for the disk $\mathcal B_k$.
Indeed, a point of $\mathcal S_j$ is at orthogonal distance $|n_j| < \delta$ from segment $j$, the only exception being the backward extension of $\mathcal S_0$, whose points have closest point $x_0$ on segment $0$ at distance $\sqrt{\tau_0^2 + n_0^2} < \sqrt2\,\delta$; and $\mathcal B_k = B(x_k, \delta)$.
In each of the three configurations the generators are at Euclidean distance at least $1$:
\begin{itemize}
\item non-adjacent segments ($|i-j| \ge 2$), by Lemma~\ref{lem:geom};
\item the vertex $x_k$ and segment $j$ with $j \notin \{k-1, k\}$, since $x_k$ lies on segment $k-1$ (if $j \ge k+1$) or on segment $k$ (if $j \le k-2$), each of which is non-adjacent to segment $j$, so that $\operatorname{dist}(x_k, \text{segment } j) \ge 1$ by Lemma~\ref{lem:geom};
\item two distinct vertices $x_k, x_j$, either because $\|x_{k+1}-x_k\| = \|s_k\| = 1$ when $|k-j| = 1$, or by Lemma~\ref{lem:geom} when $|k-j| \ge 2$.
\end{itemize}
By the triangle inequality all three distances are therefore at least $1 - 2\sqrt2\,\delta \ge 1-3\delta \ge 17/20 > 0$, using $\delta \le 1/20$ from \eqref{eq:delta}.
Being at positive distance, the closures are disjoint; and no ball of radius $(1-3\delta)/2$ meets both members of such a pair, so every point of $\R^2$ has a neighborhood meeting at most two strips (necessarily consecutive) and at most one vertex disk.

\emph{(ii).}
We first show, for every $p \in \R^2$ and every $k \ge 1$,
\begin{equation}
\label{eq:corner}
\operatorname{dist}(p, \ell_{k-1}) < \delta \implies \tau_k(p) < \delta,
\qquad
\operatorname{dist}(p, \ell_k) < \delta \implies \tau_{k-1}(p) > 1-\delta.
\end{equation}
Let $q$ be the point of $\ell_{k-1}$ closest to $p$.
Since $\ell_{k-1}$ terminates at $x_k$, we have $q = x_k - b\,s_{k-1}$ with $b \ge 0$, while $\ip{s_{k-1}}{s_k} \ge 1/2 > 0$ by the acute-cone property \eqref{eq:acute-cone}; hence
\[
\tau_k(p) = \ip{p-x_k}{s_k} = \ip{p-q}{s_k} - b\ip{s_{k-1}}{s_k} \le \|p-q\| = \operatorname{dist}(p, \ell_{k-1}) < \delta.
\]
Exchanging the roles of the two segments, with $q' = x_k + a\,s_k$ ($a \ge 0$) the point of $\ell_k$ closest to $p$,
\[
1 - \tau_{k-1}(p) = \ip{x_k-p}{s_{k-1}} = \ip{q'-p}{s_{k-1}} - a\ip{s_k}{s_{k-1}} \le \|q'-p\| = \operatorname{dist}(p, \ell_k) < \delta,
\]
which is the second implication in \eqref{eq:corner}.
Note that no upper bound on $\delta$ is used here: the corner geometry is scale free.

Now, points of $\mathcal S_{k-1}$ lie within $\delta$ of $\ell_{k-1}$, points of $\mathcal S_k$ lie within $\delta$ of $\ell_k$, and points of $\mathcal B_k = B(x_k, \delta)$ lie within $\delta$ of $x_k \in \ell_{k-1} \cap \ell_k$, so both implications of \eqref{eq:corner} apply on $\mathcal B_k$ and give $\tau_{k-1}>1-\delta$ and $\tau_k<\delta$.
By \eqref{eq:corner}, points in $\mathcal S_{k-1} \cap \mathcal S_k$ likewise have both $\tau_{k-1} > 1-\delta$ and $\tau_k < \delta$. This is \eqref{eq:cornerincl}.
A point of $\mathcal S_{k-1}$ with $\tau_{k-1}>1-\delta$ lies in $\mathcal C_k$ by \eqref{eq:Ckdef}, and likewise for $\mathcal S_k$, so the overlaps also lie in $\mathcal C_k$.
Finally, intersecting \eqref{eq:Ckdef} with $\mathcal S_{k-1}$ gives $\{p \in \mathcal S_{k-1} : \tau_{k-1} \ge 1-\delta\} \cup (\{p \in \mathcal S_k : \tau_k \le \delta\} \cap \mathcal S_{k-1})$; since the second term lies in $\mathcal S_k \cap \mathcal S_{k-1}$, every point in it satisfies $\tau_{k-1} > 1-\delta$ by \eqref{eq:corner}, so it is absorbed into the first term. This proves the first equality in \eqref{eq:Ckrest}, and the second follows symmetrically.
\end{proof}

\section{The smooth interpolant}
\label{sec:f}

We now realize the discrete orbit \eqref{eq:orbit} as the vertex
sequence of an actual $C^\infty$ function $f$ on all of $\R^2$, bounded
below and with Lipschitz gradient.
Near each vertex, $f$ matches the prescribed value and gradient via isotropic Taylor quadratics; along the segment strips, $f$ is given by a strip template, quadratic in the transverse variable with smooth axial coefficients; and outside the tubular neighborhood $\mathcal U$, $f$ transitions smoothly to a finite constant.

\subsection{The strip template}
\label{sec:template}

The discrete orbit fixes the vertices $x_k$ and gradients $g_k$.
At each vertex $x_{k+1}$ ($k \ge 0$), the incoming segment direction $s_k$ and the outgoing direction $s_{k+1}$ are non-collinear ($s_k \not\parallel s_{k+1}$, since the turning angle $\psi_{k+1} \ne 0$ by \eqref{eq:turns}).
To smoothly patch the local coordinates across the corner without requiring higher-order tensorial matching between the rotated frames $\{s_k, \nu_k\}$ and $\{s_{k+1}, \nu_{k+1}\}$, we therefore require that the Hessian at $x_{k+1}$ be rotationally invariant (isotropic), $\nabla^2 f(x_{k+1}) = \alpha_{k+1} I$.
Since $\|s_k\|=1$, the directional curvature along the incoming step is $s_k^{\mathsf T}(\alpha_{k+1} I)s_k = \alpha_{k+1}\|s_k\|^2 = \alpha_{k+1}$.
Equating this directional curvature to the secant curvature $\sigma_k = \ip{s_k}{y_k} = \cos\theta_k$ established in \eqref{eq:sigmak}, we set
\begin{equation}
\label{eq:alphak}
\alpha_{k+1} = \sigma_k = \cos\theta_k > 0 \quad (k \ge 0),
\end{equation}
so that $\nabla^2 f(x_{k+1}) = \alpha_{k+1} I$ is positive definite. At the initial vertex $x_0$ there is no incoming step; we set $f_0 = 0$ and $\alpha_0 = 0$, and note $\alpha_1 = \cos\theta_0 = \sin(\pi/2) = 1$.
On the open disk $\mathcal B_k = B(x_k, \delta)$ ($k \ge 1$), the vertex quadratic is
\begin{equation}
\label{eq:Qk}
Q_k(x) = f_k + \ip{g_k}{x-x_k} + \frac{\alpha_k}{2}\|x-x_k\|^2.
\end{equation}
At $x_0$, the linear Taylor polynomial is $Q_0(x) = \ip{g_0}{x-x_0}$ (with $f_0 = 0$ and $\alpha_0 = 0$).

In the orthonormal Fermi frame $\{s_k, \nu_k\}$ on the segment strip $\mathcal S_k$ defined in \eqref{eq:fermi}, the endpoints $x_k$ and $x_{k+1} = x_k + s_k$ correspond to $(\tau, n) = (0, 0)$ and $(1, 0)$ respectively.
Because $\{s_k, \nu_k\}$ is orthonormal, the distance squares from the endpoints are $\|x-x_k\|^2 = \tau^2 + n^2$ and $\|x-x_{k+1}\|^2 = (\tau-1)^2 + n^2$.
Substituting into \eqref{eq:Qk}, the endpoint quadratics expand as
\begin{align}
Q_k(\tau, n) &= \bigl(f_k + \tau\eta_k + \tfrac12\alpha_k\tau^2\bigr) + n\ip{g_k}{\nu_k} + \tfrac12\alpha_k n^2, \label{eq:aleft}\\
Q_{k+1}(\tau, n) &= \bigl(f_{k+1} + \tfrac12\alpha_{k+1}(\tau-1)^2\bigr) + n\ip{g_{k+1}}{\nu_k} + \tfrac12\alpha_{k+1} n^2, \label{eq:aright}
\end{align}
where $\eta_k = \ip{g_k}{s_k} = -\cos\theta_k$ by \eqref{eq:costhetak}, and the $\tau$-linear term in $Q_{k+1}$ vanishes since $\ip{g_{k+1}}{s_k} = 0$ (Lemma~\ref{lem:orbit}(i)).

Both endpoint quadratics \eqref{eq:aleft} and \eqref{eq:aright} have the same form in the transverse displacement $n$: an axial profile, a linear tilt, and a quadratic term whose coefficient is half the axial second derivative, reflecting the isotropic Hessian at the vertices. We therefore interpolate across the strip by a template of this form,
\begin{equation}
\label{eq:template}
F_k(\tau, n) = a(\tau) + b(\tau)n + \tfrac12 a''(\tau)n^2.
\end{equation}
In the orthonormal frame $\{s_k, \nu_k\}$, the on-axis Hessian along the centerline $n=0$ is
\begin{equation}
\label{eq:Hess0}
\nabla^2 F_k(\tau, 0) = \begin{pmatrix} a''(\tau) & b'(\tau) \\ b'(\tau) & a''(\tau) \end{pmatrix} = a''(\tau) I + b'(\tau)\begin{pmatrix} 0 & 1 \\ 1 & 0 \end{pmatrix}.
\end{equation}
Whenever $b'(\tau) = 0$, this Hessian is the isotropic matrix $a''(\tau) I$, as at the vertices.

To match those vertex Hessians on the collars we take $a''\equiv\alpha_k$ on a left collar and $a''\equiv\alpha_{k+1}$ on a right collar, and we take $b$ constant on each collar (so $b'=0$ there), interpolating the transverse slopes $\ip{g_k}{\nu_k}$ and $\ip{g_{k+1}}{\nu_k}$. Between the collars we place two spikes, leaving a central gap with $a''\equiv0$. We fix the total mass of $a''$ at $\alpha_{k+1}$, so that the axial slope vanishes at $\tau=1$; the leftover mass after the collars is carried by the two spikes, and shifting it from one spike to the other moves the center of mass of $a''$, which will control the Armijo ratio. This profile is shown in Figure~\ref{fig:spike}.

To implement this shape in $C^\infty$ we use two elementary building blocks: the standard smooth step $S$ and localized window $\Pi$.
Let $\chi(t) = 0$ for $t \le 0$ and $\chi(t) = \exp(-1/t)$ for $t > 0$, and set
\begin{equation}
\label{eq:Sdef}
S(u) = \frac{\chi(u)}{\chi(u) + \chi(1-u)} \qquad (u \in \R).
\end{equation}
For breakpoints $t_0 < t_1$ and $t_0 < t_1 \le t_2 < t_3$, set
\begin{align}
\label{eq:Sparam}
S(t; t_0, t_1) &:= S\Bigl(\frac{t-t_0}{t_1-t_0}\Bigr), \\
\label{eq:Pidef}
\Pi(t; t_0, t_1, t_2, t_3) &:= S(t; t_0, t_1) - S(t; t_2, t_3).
\end{align}
Thus $S(\cdot; t_0, t_1)$ transitions from $0$ on $(-\infty, t_0]$ to $1$ on $[t_1, \infty)$, and $\Pi$ is a window equal to $1$ on $[t_1, t_2]$ and vanishing outside $(t_0, t_3)$; when $t_1 = t_2$ it degenerates to a localized spike with peak value $1$ at $t = t_1$.
Their elementary analytical properties (smoothness, monotonicity, flat boundary derivatives of all orders, total mass, and center of mass) are established in Appendix~\ref{sec:app-blocks} (Lemmas~\ref{lem:S}, \ref{lem:trans}, and~\ref{lem:window}).

The transverse tilt is
\begin{equation}
\label{eq:btrans}
b(\tau) = \ip{g_k}{\nu_k} + \bigl(\ip{g_{k+1}}{\nu_k} - \ip{g_k}{\nu_k}\bigr)
S(\tau; \delta, 1-\delta).
\end{equation}
The axial curvature is assembled from two unilateral profiles, each a collar cap plus a spike,
\begin{align}
\Lambda_k^-(\tau) &:= \alpha_k\bigl(1 - S(\tau; \delta, 2\delta)\bigr) + \lambda_k M_k\,\Pi(\tau; \delta, 2\delta, 2\delta, 3\delta), \label{eq:Lambdaminus} \\
\Lambda_k^+(\tau) &:= \alpha_{k+1}\bigl(1 - S(\tau; \delta, 2\delta)\bigr) + (1-\lambda_k) M_k\,\Pi(\tau; \delta, 2\delta, 2\delta, 3\delta), \label{eq:Lambdaplus}
\end{align}
by
\begin{equation}
\label{eq:Adef}
a''(\tau) = \Lambda_k^-(\tau) + \Lambda_k^+(1-\tau),
\end{equation}
with surplus height
\begin{equation}
\label{eq:Msum}
M_k = \frac{\alpha_{k+1}}{\delta} - \tfrac32(\alpha_k + \alpha_{k+1})
\end{equation}
split by $\lambda_k$ between the two peaks.

The axial profile $a$ of \eqref{eq:template} is recovered from $a''$ by integrating twice. Setting $f_0=0$, let $a_k$ be the second antiderivative of \eqref{eq:Adef} fixed by the left-vertex data, and let the next height be its endpoint value:
\begin{equation}
\label{eq:adef}
a_k(0)=f_k, \qquad a_k'(0)=\eta_k, \qquad f_{k+1}:=a_k(1) \qquad (k\ge0);
\end{equation}
we drop the index $k$ when the segment is fixed. Successive segments are coupled only through the constant $f_k$, so the descent $f_k-f_{k+1}$ depends on $\lambda_k$ alone.

For the initial strip $\mathcal S_0$ in \eqref{eq:S0}, the same formulas give $a''\equiv0$ and $b\equiv0$ on $(-\delta,\delta]$, and
\begin{equation}
\label{eq:Q0collar}
F_0(\tau, n) = -\tau = Q_0(\tau, n) \quad\text{on } (-\delta, \delta] \times (-\delta, \delta).
\end{equation}

Section~\ref{sec:props} verifies four properties of this $F_k$, in this order:
\begin{itemize}
\item \emph{Axial curvature} (Lemma~\ref{lem:admissible}): $a''$ matches the vertex curvatures on the collars, carries total mass $\alpha_{k+1}$, and has a tunable center of mass.
\item \emph{Axial profile} (Lemma~\ref{lem:profile}): the integral $a$ decreases to a stationary point at $\tau=1$, with Armijo ratio equal to that center of mass.
\item \emph{Collar agreement} (Lemma~\ref{lem:collar}): $F_k$ coincides with $Q_k$ and $Q_{k+1}$ on the collars.
\item \emph{Hessian bound} (Lemma~\ref{lem:hessU}): $\nabla^2 F_k$ is controlled by $\delta$, uniformly in $k$, so that $\nabla f$ will be Lipschitz.
\end{itemize}

\begin{figure}[t]
\centering
\includegraphics[width=0.76\textwidth]{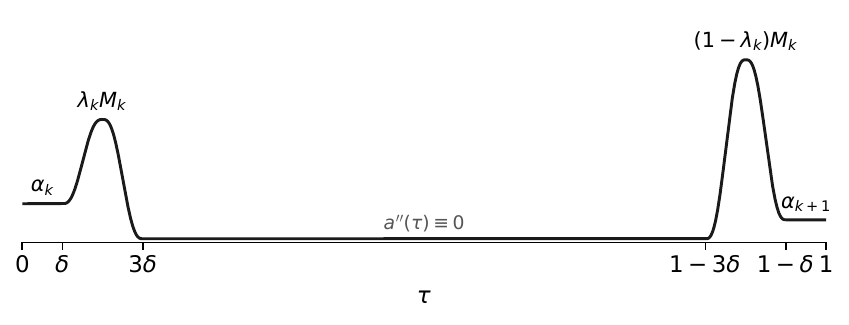}
\caption{The axial curvature profile $a''(\tau) = \Lambda_k^-(\tau) + \Lambda_k^+(1-\tau)$ along a typical segment ($k\ge1$, $\delta=1/20$) defined in \eqref{eq:Adef}, shown with split parameter $\lambda_k=0.4$. The flat collar caps $[0, \delta]$ and $[1-\delta, 1]$ preserve the isotropic vertex curvatures $\alpha_k$ and $\alpha_{k+1}$ identically. In the interior, two smooth spike windows of heights $\lambda_k M_k$ (at $\tau=2\delta$) and $(1-\lambda_k)M_k$ (at $\tau=1-2\delta$) share the surplus height $M_k$ between them, calibrating the Armijo ratio, leaving a central zero-curvature gap $a'' \equiv 0$ on $[3\delta, 1-3\delta]$.}
\label{fig:spike}
\end{figure}

\subsection{Properties of the strip template}
\label{sec:props}

We now prove the four properties listed in Section~\ref{sec:template}.
All estimates in this section hold for every $\delta\in(0,1/20]$ and every $\lambda_k\in[0,1]$, with constants independent of $k$ and $\lambda_k$; these two parameters are fixed in Section~\ref{sec:ls}.

\begin{lemma}[Axial curvature]
\label{lem:admissible}
Fix $k\ge0$ and, for each $\lambda\in[0,1]$ (suppressing the index $k$ on $\lambda$ for brevity), let $a''_\lambda$ denote the axial curvature \eqref{eq:Adef} with split parameter $\lambda$. Then:
\begin{enumerate}
\item[(i)] \emph{Collars and mass.} The axial curvature $a''_\lambda$ satisfies $a''_\lambda\ge0$ on $[0,1]$, $a''_\lambda\equiv\alpha_k$ on $[0,\delta]$, $a''_\lambda\equiv\alpha_{k+1}$ on $[1-\delta,1]$, and $\int_0^1 a''_\lambda=\alpha_{k+1}$.
\item[(ii)] \emph{Center-of-mass sweep.} The normalized center of mass
\[
r(\lambda) := \frac1{\alpha_{k+1}}\int_0^1\tau\,a''_\lambda(\tau)\,d\tau
\]
is affine and strictly decreasing, hence maps $[0,1]$ onto $[r(1), r(0)]$, where
\[
r(1)<6\delta \qquad\text{and}\qquad r(0)>1-6\delta.
\]
\end{enumerate}
\end{lemma}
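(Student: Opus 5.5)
The plan is to compute everything in Lemma~\ref{lem:admissible} directly from the building-block facts in Appendix~\ref{sec:app-blocks}, treating the two unilateral profiles $\Lambda_k^-(\tau)$ and $\Lambda_k^+(1-\tau)$ separately. I will use the following properties of $S(\cdot;t_0,t_1)$ and $\Pi$:
\begin{itemize}
\item $S(\cdot;t_0,t_1)$ is smooth, monotone, equal to $0$ on $(-\infty,t_0]$ and to $1$ on $[t_1,\infty)$.
\item $S$ has the symmetry $S(u)+S(1-u)=1$. This gives $\int_{t_0}^{t_1}(1-S(t;t_0,t_1))\,dt=\tfrac12(t_1-t_0)$, with center of mass at the midpoint.
\item The spike $\Pi(\cdot;\delta,2\delta,2\delta,3\delta)$ is nonnegative, supported in $[\delta,3\delta]$, has mass $\delta$, and has center of mass $2\delta$.
\end{itemize}

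\textbf{Supports and collar values.} The profile $\Lambda_k^-$ is supported in $[0,3\delta]$, and $\tau\mapsto\Lambda_k^+(1-\tau)$ is supported in $[1-3\delta,1]$. Since $\delta\le1/20$, these intervals are disjoint. On $[0,\delta]$ we have $1-S(\tau;\delta,2\delta)=1$ and $\Pi=0$, while $1-\tau\ge1-\delta>3\delta$ kills the right profile; hence $a''_\lambda\equiv\alpha_k$ there. The case $[1-\delta,1]$ is symmetric and gives $\alpha_{k+1}$.

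\textbf{Mass.} Each collar cap $\alpha(1-S(\tau;\delta,2\delta))$ has mass $\alpha(\delta+\delta/2)=\tfrac32\delta\alpha$. The two spikes together carry $(\lambda+(1-\lambda))M_k\delta=M_k\delta$. Therefore
\[
\int_0^1 a''_\lambda=\tfrac32\delta(\alpha_k+\alpha_{k+1})+\delta M_k=\alpha_{k+1}
\]
by \eqref{eq:Msum}.

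\textbf{Nonnegativity (the only real check).} Every summand is nonnegative provided $M_k\ge0$. I will verify $\alpha_k\le2\alpha_{k+1}$. For $k=0$ this is trivial since $\alpha_0=0$. For $k\ge1$ it follows from $\alpha_k=\sin(\pi/2^k)=2\sin(\pi/2^{k+1})\cos(\pi/2^{k+1})\le2\alpha_{k+1}$, using \eqref{eq:alphak} and \eqref{eq:costhetak}. Consequently
\[
M_k\ge\alpha_{k+1}\bigl(1/\delta-9/2\bigr)\ge\tfrac{31}{2}\alpha_{k+1}>0.
\]
This bound $\alpha_k\le2\alpha_{k+1}$ is the step I expect to need care, because it is where the specific orbit enters, and it is reused below.

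\textbf{Center-of-mass sweep.} The map $\lambda\mapsto a''_\lambda$ is affine, so $r$ is affine. Its slope is
\[
r'(\lambda)=\frac{M_k\delta\,\bigl(2\delta-(1-2\delta)\bigr)}{\alpha_{k+1}},
\]
which is strictly negative because $M_k>0$ and $4\delta<1$. Hence $r$ maps $[0,1]$ onto $[r(1),r(0)]$.

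For $\lambda=1$, all mass except the right collar cap lies in $[0,3\delta]$. That cap has mass $\tfrac32\delta\alpha_{k+1}$ and positions at most $1$. Thus
\[
r(1)\le3\delta+\tfrac32\delta<6\delta.
\]

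For $\lambda=0$, the left cap has mass $\tfrac32\delta\alpha_k\le3\delta\alpha_{k+1}$, and the rest, of mass at least $(1-3\delta)\alpha_{k+1}$, sits in $[1-3\delta,1]$. Therefore
\[
r(0)\ge(1-3\delta)^2>1-6\delta,
\]
where the strict inequality holds since $9\delta^2>0$.
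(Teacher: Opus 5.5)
Your proposal is correct and follows the paper's proof essentially step for step: support separation, collar values, cap mass $\tfrac32\delta\alpha$ and spike mass $\delta$, the bound $\alpha_k\le 2\alpha_{k+1}$ giving $M_k>0$, affinity in $\lambda$, and crude support bounds at $\lambda\in\{0,1\}$. The differences are minor and both versions are valid. You get strict decrease from the explicit slope $M_k\delta(4\delta-1)/\alpha_{k+1}$ via the spike centroid, whereas the paper deduces it from the endpoint bounds. You also bound $r(0)\ge(1-3\delta)^2$ directly rather than bounding $1-r(0)$.
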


\begin{proof}
By \eqref{eq:Lambdaminus}--\eqref{eq:Adef}, $a''_\lambda(\tau)=\Lambda_k^-(\tau) + \Lambda_k^+(1-\tau)$. Throughout, $\alpha_{k+1}=\cos\theta_k>0$ by \eqref{eq:alphak}, while $\alpha_k=\cos\theta_{k-1}>0$ for $k\ge1$ and $\alpha_0=0$; hence $\alpha_k\ge0$ and $\alpha_{k+1}>0$.

\emph{(i).} For $k\ge1$, Lemma~\ref{lem:orbit}(ii) gives
\begin{equation}
\label{eq:rk}
\frac{\alpha_k}{\alpha_{k+1}} = \frac{\cos\theta_{k-1}}{\cos\theta_k}
= \frac{\sin(\pi/2^k)}{\sin(\pi/2^{k+1})}
= 2\cos\bigl(\pi/2^{k+1}\bigr) < 2,
\end{equation}
while $\alpha_0=0$. Thus $\alpha_k<2\alpha_{k+1}$ for all $k\ge0$, and with $\delta \le 1/20$ by \eqref{eq:delta} the surplus height is strictly positive:
\[
M_k > \alpha_{k+1}(\delta^{-1} - \tfrac92) \ge \tfrac{31}{2}\alpha_{k+1} > 0.
\]
With $\alpha_k, \alpha_{k+1}, M_k \ge 0$ and $\lambda \in [0, 1]$, each coefficient in $\Lambda_k^-$ and $\Lambda_k^+$ is non-negative. Because both $1-S \in [0, 1]$ (Lemma~\ref{lem:trans}(i)) and $\Pi \in [0, 1]$ (Lemma~\ref{lem:window}(i)), each profile is non-negative, so $a''_\lambda \ge 0$.

For $\delta\le 1/20$, the supports of $\Lambda_k^-(\tau)$ and $\Lambda_k^+(1-\tau)$ lie in $[0,3\delta]$ and $[1-3\delta,1]$, which are disjoint:
\begin{equation}
\label{eq:disjoint}
3\delta \le \frac{3}{20} < \frac{17}{20} \le 1-3\delta.
\end{equation}
On $[0, \delta]$, $1-S(\tau; \delta, 2\delta) \equiv 1$ and $\Pi(\tau; \delta, 2\delta, 2\delta, 3\delta) \equiv 0$ by Lemma~\ref{lem:trans}(i) and Lemma~\ref{lem:window}(i), so $\Lambda_k^-(\tau) \equiv \alpha_k$. By the support separation \eqref{eq:disjoint}, $\Lambda_k^+(1-\tau) \equiv 0$ on $[0, \delta]$, so $a''_\lambda \equiv \alpha_k$ on $[0, \delta]$. Symmetrically, on $[1-\delta, 1]$, $\Lambda_k^-(\tau) \equiv 0$ and $\Lambda_k^+(1-\tau) \equiv \alpha_{k+1}$, so $a''_\lambda \equiv \alpha_{k+1}$ on $[1-\delta, 1]$.

By Lemma~\ref{lem:trans}(iv), each reversed transition cap $1-S(\cdot; \delta, 2\delta)$ has integral $\frac32\delta$, while by Lemma~\ref{lem:window}(iv) each spike window has mass $\delta$.
Integrating \eqref{eq:Adef} gives
\begin{equation}
\label{eq:target}
\int_0^1 a''_\lambda(\tau)\,d\tau = \tfrac32(\alpha_k + \alpha_{k+1})\delta + [\lambda + (1-\lambda)]M_k\delta = \tfrac32(\alpha_k + \alpha_{k+1})\delta + M_k\delta \overset{\eqref{eq:Msum}}{=} \alpha_{k+1},
\end{equation}
independently of $\lambda$.

\emph{(ii).}
Because $\lambda$ enters \eqref{eq:Lambdaminus}--\eqref{eq:Adef} affinely via the spike heights $\lambda M_k$ and $(1-\lambda)M_k$, we have $a''_\lambda(\tau) = \lambda a''_1(\tau) + (1-\lambda) a''_0(\tau)$ pointwise on $[0, 1]$.
Integrating against $\tau$ shows that $r(\lambda) = \lambda r(1) + (1-\lambda) r(0)$ is affine.
Strict decrease then follows directly from the endpoint estimates below, which give $r(1) < 6\delta \le 3/10 < 7/10 \le 1-6\delta < r(0)$ for $\delta \le 1/20$, so that the slope $r(1) - r(0)$ is strictly negative.

To establish these range bounds, crude support bounds suffice:

\emph{Case $\lambda=1$ (all surplus on the left spike):}
The right spike receives no surplus ($\lambda=1$), so the mass of $a''_1$ on $[1-3\delta, 1]$ reduces to that of the collar cap, which by Lemma~\ref{lem:trans}(iv) is $\int_{1-3\delta}^1 a''_1 = \tfrac32\alpha_{k+1}\delta$. Using $\tau \le 3\delta$ on the left support $[0, 3\delta]$ and $\tau \le 1$ on the right, and bounding the left mass by the total mass $\alpha_{k+1}$,
\[
\int_0^1 \tau\,a''_1(\tau)\,d\tau \le 3\delta\int_0^{3\delta} a''_1 + 1\cdot\int_{1-3\delta}^1 a''_1 \le 3\delta\alpha_{k+1} + \tfrac32\alpha_{k+1}\delta = \tfrac92\alpha_{k+1}\delta < 6\alpha_{k+1}\delta.
\]
Dividing by $\alpha_{k+1}$ gives $r(1) < 6\delta$.

\emph{Case $\lambda=0$ (all surplus on the right spike):}
The left spike receives no surplus ($\lambda=0$), so the mass of $a''_0$ on $[0, 3\delta]$ reduces to that of the collar cap, $\int_0^{3\delta} a''_0(\tau)\,d\tau = \tfrac32\alpha_k\delta$. For $k=0$ this mass vanishes since $\alpha_0=0$; for $k\ge1$, $\alpha_k < 2\alpha_{k+1}$ by \eqref{eq:rk}. In all cases $\tfrac32\alpha_k\delta < 3\alpha_{k+1}\delta$. Writing $1-r(0) = \frac{1}{\alpha_{k+1}}\int_0^1(1-\tau)a''_0(\tau)\,d\tau$ and using $1-\tau \le 1$ on the left support and $1-\tau \le 3\delta$ on the right,
\[
\int_0^1(1-\tau)a''_0(\tau)\,d\tau \le 1\cdot\int_0^{3\delta} a''_0 + 3\delta\int_{1-3\delta}^1 a''_0 < 3\alpha_{k+1}\delta + 3\delta\alpha_{k+1} = 6\alpha_{k+1}\delta,
\]
which gives $1-r(0) < 6\delta$, that is, $r(0) > 1-6\delta$.
Because the bounds $6\delta$ and $1-6\delta$ depend only on $\delta$ and not on $k$, they hold uniformly across all segments $k \ge 0$.
\end{proof}

\begin{lemma}[Axial profile]
\label{lem:profile}
Fix $k\ge0$ and $\lambda\in[0,1]$, and let $a''$ denote the axial curvature \eqref{eq:Adef} with this split parameter. Then:
\begin{enumerate}
\item[(i)] \emph{Strict axial descent and stationarity.} $a'(1)=0$ and $a'<0$ on $[0,1)$; consequently $a$ is strictly decreasing on $[0,1]$, with $f_{k+1}<f_k$.
\item[(ii)] \emph{Collar quadratic forms.} $a(\tau)=f_k-\alpha_{k+1}\tau+\tfrac12\alpha_k\tau^2$ on $[0,\delta]$, and $a(\tau)=f_{k+1}+\tfrac12\alpha_{k+1}(\tau-1)^2$ on $[1-\delta,1]$.
\item[(iii)] \emph{Net descent and the Armijo ratio.}
\begin{equation}
\label{eq:rhocentroid}
f_k-f_{k+1}=\int_0^1\tau\,a''(\tau)\,d\tau, \qquad
\frac{f_{k+1}-f_k}{\eta_k} = \frac1{\alpha_{k+1}}\int_0^1\tau\,a''(\tau)\,d\tau.
\end{equation}
\end{enumerate}
\end{lemma}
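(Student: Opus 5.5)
Everything follows from integrating $a''$ against the initial data \eqref{eq:adef} and using Lemma~\ref{lem:admissible}. First note $\eta_k=-\cos\theta_k=-\alpha_{k+1}$ by \eqref{eq:costhetak} and \eqref{eq:alphak}. Hence
\[
a'(\tau)=-\alpha_{k+1}+\int_0^\tau a''(u)\,du,
\]
and Lemma~\ref{lem:admissible}(i) gives $a'(1)=-\alpha_{k+1}+\alpha_{k+1}=0$.

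For strict negativity on $[0,1)$, write $a'(\tau)=-\int_\tau^1 a''(u)\,du$. Since $a''\ge0$ and $a''\equiv\alpha_{k+1}>0$ on $[1-\delta,1]$, for $\tau<1$ the integral is at least $\alpha_{k+1}(1-\max(\tau,1-\delta))>0$. So $a'<0$ on $[0,1)$, $a$ is strictly decreasing, and $f_{k+1}=a(1)<a(0)=f_k$. This settles (i).

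For (ii), on $[0,\delta]$ we have $a''\equiv\alpha_k$, and integrating twice from the data $a(0)=f_k$, $a'(0)=-\alpha_{k+1}$ gives the left quadratic directly. On $[1-\delta,1]$ we have $a''\equiv\alpha_{k+1}$, and we integrate backward from $\tau=1$ using $a(1)=f_{k+1}$ and $a'(1)=0$ from (i), which gives $f_{k+1}+\tfrac12\alpha_{k+1}(\tau-1)^2$.

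For (iii), use Taylor's formula with integral remainder:
\[
a(1)=a(0)+a'(0)+\int_0^1(1-\tau)a''(\tau)\,d\tau=f_k-\alpha_{k+1}+\alpha_{k+1}-\int_0^1\tau a''(\tau)\,d\tau,
\]
where we split $\int(1-\tau)a''=\int a''-\int\tau a''$ and use the total mass $\alpha_{k+1}$ from Lemma~\ref{lem:admissible}(i). This is the first identity in \eqref{eq:rhocentroid}. Dividing by $\eta_k=-\alpha_{k+1}$ gives the Armijo ratio as the normalized center of mass $r(\lambda)$.

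The only point needing care is the strictness in (i) near $\tau=1$. The backward-integral form handles it cleanly using the positive collar value $\alpha_{k+1}$. The case $k=0$ with $\alpha_0=0$ needs no special treatment, because only $a''\ge0$ and the right collar are used.
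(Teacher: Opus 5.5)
Your proposal is correct and follows essentially the same route as the paper. You get $a'(1)=0$ from the total mass $\alpha_{k+1}$, and strict negativity from the right-collar bound $\int_\tau^1 a''\ge\alpha_{k+1}\bigl(1-\max(\tau,1-\delta)\bigr)>0$; the collar quadratics come from integrating forward from $\tau=0$ and backward from $\tau=1$, and your Taylor-with-integral-remainder step in (iii) is exactly the paper's Fubini interchange $\int_0^1\!\int_0^\tau a''(u)\,du\,d\tau=\int_0^1(1-u)\,a''(u)\,du$.
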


\begin{proof}
By the left-vertex data \eqref{eq:adef} together with $\eta_k=-\cos\theta_k$ from \eqref{eq:costhetak},
\begin{equation}
\label{eq:aprof}
a'(\tau) = -\cos\theta_k + \int_0^\tau a''(u)\,du, \qquad
a(\tau) = f_k + \int_0^\tau a'(u)\,du,
\end{equation}
with $f_{k+1}=a(1)$.

\emph{(i).} By \eqref{eq:aprof} and \eqref{eq:alphak}, $a'(0)=-\alpha_{k+1}$. Combined with \eqref{eq:target}, $a'(1)=-\alpha_{k+1}+\int_0^1 a''(u)\,du=0$. For any $\tau\in[0,1)$, let $\tau^*=\max(\tau,1-\delta)<1$. Discarding $[\tau,\tau^*]$ by non-negativity of $a''$ (Lemma~\ref{lem:admissible}(i)) and using $a''\equiv\alpha_{k+1}>0$ on $[1-\delta,1]\supseteq[\tau^*,1]$,
\[
\int_\tau^1 a''(u)\,du \ge \int_{\tau^*}^1 a''(u)\,du = \alpha_{k+1}(1-\tau^*) > 0,
\]
so $\int_0^\tau a''(u)\,du = \alpha_{k+1} - \int_\tau^1 a''(u)\,du < \alpha_{k+1}$, which gives $a'(\tau)<0$. Because $a'<0$ on $[0,1)$, $a$ is strictly decreasing on $[0,1]$, whence $f_{k+1}=a(1)<a(0)=f_k$.

\emph{(ii).} By Lemma~\ref{lem:admissible}(i), $a''\equiv\alpha_k$ on $[0,\delta]$ and $a''\equiv\alpha_{k+1}$ on $[1-\delta,1]$. On $[0,\delta]$ we thus have $a'(\tau)=-\alpha_{k+1}+\alpha_k\tau$; integrating from $a(0)=f_k$ gives $a(\tau)=f_k-\alpha_{k+1}\tau+\tfrac12\alpha_k\tau^2$. On $[1-\delta,1]$ we have $a'(\tau)=a'(1)-\int_\tau^1 a''(u)\,du=-\alpha_{k+1}(1-\tau)$; integrating backward from $a(1)=f_{k+1}$ gives $a(\tau)=f_{k+1}+\tfrac12\alpha_{k+1}(\tau-1)^2$.

\emph{(iii).} Interchanging the order of integration via Fubini's theorem,
\begin{align*}
f_{k+1}-f_k
&=\int_0^1 a'(\tau)\,d\tau
=-\alpha_{k+1}+\int_0^1\!\!\int_0^\tau a''(u)\,du\,d\tau \\
&=-\alpha_{k+1}+\int_0^1(1-u)a''(u)\,du
=-\int_0^1\tau\,a''(\tau)\,d\tau,
\end{align*}
the last equality following from \eqref{eq:target}. Dividing by $\eta_k=-\alpha_{k+1}$, which holds by \eqref{eq:costhetak} and \eqref{eq:alphak}, gives \eqref{eq:rhocentroid}.
\end{proof}

\paragraph{Controllability of the Armijo ratio.}
By Lemma~\ref{lem:profile}(iii), the Armijo ratio equals $r(\lambda)$. Lemma~\ref{lem:admissible}(ii) therefore guarantees that this ratio can be shifted continuously and monotonically over an interval containing $[6\delta, 1-6\delta]$.

\begin{lemma}[Collar agreement and smoothness]
\label{lem:collar}
For any $k\ge0$ and any $\lambda_k\in[0,1]$, the formulas \eqref{eq:btrans} and \eqref{eq:Adef} define $b,a''\in C^\infty(\R)$, and with $a,a'$ obtained by integrating from the left-vertex data \eqref{eq:adef}, the function
\begin{equation}
\label{eq:Ftilde}
\widetilde F_k(x) := a(\tau_k)+b(\tau_k)n_k+\tfrac12 a''(\tau_k)n_k^2
\end{equation}
belongs to $C^\infty(\R^2)$. Moreover $\widetilde F_k\equiv Q_k$ on the half-plane $\{\tau_k\le\delta\}$ and $\widetilde F_k\equiv Q_{k+1}$ on the half-plane $\{\tau_k\ge 1-\delta\}$. In particular, the restriction $F_k=\widetilde F_k\big|_{\mathcal S_k}$ is $C^\infty$ on $\mathcal S_k$ and coincides with $Q_k$ on the left collar and with $Q_{k+1}$ on the right collar. For $k=0$, $Q_0$ is linear and the left identity holds on $\{\tau_0\le\delta\}$, hence throughout the backward tail of \eqref{eq:S0}.
\end{lemma}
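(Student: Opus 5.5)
The proof splits into a smoothness part and two collar identities, and each part reduces to facts about the one-variable functions $a,b,a''$ combined with the affine Fermi change of variables \eqref{eq:fermi}.

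\emph{Smoothness.} By Lemmas~\ref{lem:S}, \ref{lem:trans} and~\ref{lem:window}, $S(\cdot;t_0,t_1)$ and $\Pi$ lie in $C^\infty(\R)$. Then $b$ in \eqref{eq:btrans} is a constant plus a constant multiple of a smooth step, so $b\in C^\infty(\R)$. Likewise $a''$ in \eqref{eq:Adef} is a finite linear combination of compositions of $1-S$ and $\Pi$ with the affine maps $\tau\mapsto\tau$ and $\tau\mapsto 1-\tau$, so $a''\in C^\infty(\R)$. Integrating twice from the data \eqref{eq:adef}, exactly as in \eqref{eq:aprof}, gives $a'$ and $a$ in $C^\infty(\R)$; here we take $a''$ as the function \eqref{eq:Adef} on all of $\R$. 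The map $x\mapsto(\tau_k(x),n_k(x))$ is affine, so each term of \eqref{eq:Ftilde} is a product of smooth functions and $\widetilde F_k\in C^\infty(\R^2)$. The restriction $F_k$ is then smooth on $\mathcal S_k$.

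\emph{Collar identities.} We must compare \eqref{eq:Ftilde} with the Fermi expansions \eqref{eq:aleft}--\eqref{eq:aright}, and this has to hold on the whole half-planes, not only on $[0,\delta]$ and $[1-\delta,1]$. So the first step is to show that $a''$, $b$ and $a$ behave correctly on all of $\tau\le\delta$ and all of $\tau\ge 1-\delta$. For $\tau\le\delta$ we have $1-S(\tau;\delta,2\delta)\equiv1$ and $\Pi(\tau;\delta,2\delta,2\delta,3\delta)\equiv0$. Since $1-\tau\ge 1-\delta>3\delta$, the reflected term $\Lambda_k^+(1-\tau)$ involves $1-S(1-\tau;\delta,2\delta)=0$ and $\Pi(1-\tau;\dots)=0$ there. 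Hence $a''\equiv\alpha_k$ on $(-\infty,\delta]$, and symmetrically $a''\equiv\alpha_{k+1}$ on $[1-\delta,\infty)$. Similarly $b\equiv\ip{g_k}{\nu_k}$ on $(-\infty,\delta]$ and $b\equiv\ip{g_{k+1}}{\nu_k}$ on $[1-\delta,\infty)$. Integrating then extends the quadratic formulas of Lemma~\ref{lem:profile}(ii) to these half-lines, because $a$ restricted to each half-line is the unique quadratic with the stated curvature and the stated data at $0$ or at $1$. This gives $a(\tau)=f_k+\eta_k\tau+\tfrac12\alpha_k\tau^2$ on $\tau\le\delta$, using $\eta_k=-\alpha_{k+1}$ from \eqref{eq:costhetak} and \eqref{eq:alphak}, and $a(\tau)=f_{k+1}+\tfrac12\alpha_{k+1}(\tau-1)^2$ on $\tau\ge1-\delta$, using $a'(1)=0$ from Lemma~\ref{lem:profile}(i). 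Substituting into \eqref{eq:Ftilde} reproduces \eqref{eq:aleft} and \eqref{eq:aright} term by term. Since \eqref{eq:aleft}--\eqref{eq:aright} are exact rewritings of $Q_k$ and $Q_{k+1}$ in the orthonormal frame, we get $\widetilde F_k\equiv Q_k$ and $\widetilde F_k\equiv Q_{k+1}$ on the respective half-planes. The statement about the collars inside $\mathcal S_k$ follows by restriction.

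\emph{The case $k=0$.} Here $\alpha_0=0$, $f_0=0$, and $g_0=-s_0$ gives $\ip{g_0}{\nu_0}=0$ and $\eta_0=-1$. The left identity becomes $\widetilde F_0=-\tau_0=Q_0$ on $\{\tau_0\le\delta\}$. This covers the backward tail $\tau_0\in(-\delta,0)$, consistent with \eqref{eq:Q0collar}.

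I expect the main obstacle to be the bookkeeping that the cutoff profiles stay constant beyond $[0,1]$. The vanishing of $\Lambda_k^+(1-\tau)$ for $\tau\le\delta$ relies on $\delta\le1/20$ through \eqref{eq:disjoint}, and it also relies on the one-sided flatness of $S$ on the half-lines $(-\infty,t_0]$ and $[t_1,\infty)$. Everything else is direct substitution.
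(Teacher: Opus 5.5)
Your proposal is correct and follows essentially the same route as the paper's proof. Both arguments get smoothness from the building blocks composed with the affine Fermi coordinates, show that $a''$ and $b$ are constant on the full half-lines $(-\infty,\delta]$ and $[1-\delta,\infty)$, integrate from the data at $\tau=0$ and at $\tau=1$ (the latter using $a'(1)=0$ from Lemma~\ref{lem:profile}(i)), and substitute into the Fermi expansions of $Q_k$ and $Q_{k+1}$, with the $k=0$ case reducing to $-\tau_0$.
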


\begin{proof}
The Fermi coordinates \eqref{eq:fermi} are affine in $x$. By Lemmas~\ref{lem:trans} and~\ref{lem:window}, $S(\cdot;t_0,t_1)$ and $\Pi$ lie in $C^\infty(\R)$, so $b,a''\in C^\infty(\R)$, and $a,a'\in C^\infty(\R)$ by integration from \eqref{eq:adef}. Thus $\widetilde F_k\in C^\infty(\R^2)$.

Now fix $\tau\le\delta$. Then $S(\tau;\delta,2\delta)\equiv0$ and $\Pi(\tau;\delta,2\delta,2\delta,3\delta)\equiv0$, so $\Lambda_k^-(\tau)\equiv\alpha_k$. At the reflected argument, $1-\tau\ge 1-\delta\ge 19/20>3/20\ge 3\delta$ by \eqref{eq:delta}, so $S(1-\tau;\delta,2\delta)\equiv1$ and $\Pi(1-\tau;\delta,2\delta,2\delta,3\delta)\equiv0$, hence $\Lambda_k^+(1-\tau)\equiv0$. Therefore $a''(\tau)\equiv\alpha_k$ on $(-\infty,\delta]$. The same bound $\tau\le\delta$ gives $S(\tau;\delta,1-\delta)\equiv0$, so $b(\tau)\equiv\ip{g_k}{\nu_k}$. Integrating $a''\equiv\alpha_k$ from $a'(0)=\eta_k$ and $a(0)=f_k$ yields $a(\tau)=f_k+\tau\eta_k+\tfrac12\alpha_k\tau^2$. Substituting into \eqref{eq:Ftilde} produces
\[
\widetilde F_k(x)
= \bigl(f_k+\tau_k\eta_k+\tfrac12\alpha_k\tau_k^2\bigr)+n_k\ip{g_k}{\nu_k}+\tfrac12\alpha_k n_k^2
= Q_k(x)
\]
on $\{\tau_k\le\delta\}$, for every $n_k\in\R$, matching \eqref{eq:aleft}. For $k=0$ one has $\alpha_0=0$ and $\ip{g_0}{\nu_0}=0$, so this identity reduces to $\widetilde F_0(x)=-\tau_0=Q_0(x)$.

Symmetrically, if $\tau\ge 1-\delta$ then $\Lambda_k^-(\tau)\equiv0$ (since $1-\delta\ge 3\delta$) and $1-\tau\le\delta$, so $\Lambda_k^+(1-\tau)\equiv\alpha_{k+1}$ and $a''(\tau)\equiv\alpha_{k+1}$ on $[1-\delta,\infty)$. Also $S(\tau;\delta,1-\delta)\equiv1$, hence $b(\tau)\equiv\ip{g_{k+1}}{\nu_k}$. By Lemma~\ref{lem:profile}(i), $a'(1)=0$, and integrating $a''\equiv\alpha_{k+1}$ from $\tau=1$ gives $a(\tau)=f_{k+1}+\tfrac12\alpha_{k+1}(\tau-1)^2$. Substituting produces $\widetilde F_k\equiv Q_{k+1}$ on $\{\tau_k\ge 1-\delta\}$, matching \eqref{eq:aright}.

The stated restrictions to $\mathcal S_k$ follow immediately.
\end{proof}

\begin{lemma}[Uniform Hessian bound]
\label{lem:hessU}
For any $k\ge0$ and any $\lambda_k\in[0,1]$, the Hessian of the strip template $F_k$ in \eqref{eq:template} satisfies
\begin{equation}
\label{eq:hessU}
\|\nabla^2 F_k(\tau,n)\|\le\frac{K_{\mathcal U}}{\delta}
\quad\text{for all }(\tau,n)\in\mathcal S_k,
\end{equation}
where $K_{\mathcal U} > 0$ is a universal constant independent of $k$ and $\lambda_k$.
\end{lemma}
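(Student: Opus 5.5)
We compute the Hessian of $F_k(\tau,n)=a(\tau)+b(\tau)n+\tfrac12 a''(\tau)n^2$ directly in the orthonormal Fermi frame $\{s_k,\nu_k\}$. Since the frame is orthonormal and the coordinates are affine, the operator norm of $\nabla^2F_k$ equals that of the coordinate Hessian
\[
\begin{pmatrix} a''+b''n+\tfrac12 a^{(4)}n^2 & b'+a'''n\\ b'+a'''n & a''\end{pmatrix},
\]
so it suffices to bound each entry by $C/\delta$ on $\mathcal S_k$, where $|n|<\delta$. Everything then reduces to sup-norm bounds on $a'',a''',a^{(4)},b',b''$, which we obtain from the scaling of the building blocks.

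\textbf{Scaling facts.} The appendix lemmas give $S\in C^\infty$ with bounded derivatives, so $\|S^{(j)}\|_\infty=:\kappa_j<\infty$ for $j\le 2$ is a universal constant. For $S(\cdot;t_0,t_1)$ the $j$-th derivative is bounded by $\kappa_j (t_1-t_0)^{-j}$, and likewise for $\Pi$, where each of its two transitions has width $\delta$. We then assemble the coefficient sizes. First, $\alpha_k,\alpha_{k+1}\le 1$, since $\alpha_{k+1}=\sin(\pi/2^{k+1})\le1$ and $\alpha_0=0$. Next, $0\le M_k\le \alpha_{k+1}/\delta\le 1/\delta$ by \eqref{eq:Msum}, and $\lambda_k\in[0,1]$. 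Finally, the transverse slopes satisfy $|\ip{g_j}{\nu_k}|\le1$.

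\textbf{Term bounds.} From \eqref{eq:Lambdaminus}--\eqref{eq:Adef} we get $\|a''\|_\infty\le 2+1/\delta$. Each further derivative costs a factor $1/\delta$, which gives $\|a'''\|_\infty\lesssim \kappa_1/\delta^2$ and $\|a^{(4)}\|_\infty\lesssim\kappa_2/\delta^3$; the dominant contribution is $M_k\Pi^{(j)}$, of size $\delta^{-1}\cdot\delta^{-j}$. The transverse tilt \eqref{eq:btrans} has its transition of width $1-2\delta\ge 9/10$, so $\|b'\|,\|b''\|\le C$. Inserting $|n|<\delta$ yields:
\begin{itemize}
\item the $(1,1)$ entry is $\lesssim \delta^{-1}+C\delta+\tfrac12\kappa_2\delta^{-3}\delta^2 = O(\delta^{-1})$;
\item the off-diagonal entries are $\lesssim C+\kappa_1\delta^{-2}\delta = O(\delta^{-1})$;
\item the $(2,2)$ entry is $O(\delta^{-1})$.
\end{itemize}
Since $\delta\le1/20$, the bounded terms can be absorbed into $\delta^{-1}$. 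Taking, say, the Frobenius norm then gives \eqref{eq:hessU}, with $K_{\mathcal U}$ depending only on $\kappa_0,\kappa_1,\kappa_2$.

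\textbf{Main obstacle: the $a^{(4)}n^2$ term.} This is the one place where the estimate could fail. The fourth derivative of $a$ blows up like $\delta^{-3}$, and only the strip width $|n|<\delta$ brings this back to $\delta^{-1}$. The argument therefore relies on the strip half-width being exactly the same $\delta$ as the spike width, together with the fact that the spike height $M_k$ is $O(1/\delta)$ and not larger. We must check both carefully, including uniformly in $k$ via $\alpha_{k+1}\le1$.

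We must also check that the backward extension of $\mathcal S_0$ ($\tau\in(-\delta,0)$) causes no trouble. There $F_0$ is linear by \eqref{eq:Q0collar}, so its Hessian vanishes. On the collars the Hessian is $\alpha I$ with $\alpha\le1$, which is consistent with the bound.
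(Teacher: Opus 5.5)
Your proposal is correct and follows essentially the same route as the paper: compute the Hessian in the orthonormal Fermi frame, bound $a'',b',b''$ by $O(1/\delta)$ and $O(1)$, bound $a''',a''''$ by $O(\delta^{-2})$ and $O(\delta^{-3})$ via the scaling of $S$ and $\Pi$ together with $0\le M_k\le 1/\delta$, let $|n|<\delta$ absorb the excess powers, and handle the backward tail of $\mathcal S_0$ by linearity. The only cosmetic difference is that you bound $a''$ by $2+1/\delta$ by summing all terms, whereas the paper uses the disjoint supports to get $a''<1/\delta$; this costs nothing.
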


\begin{proof}
Let $C_m = \|S^{(m)}\|_\infty$ for $m \ge 1$, which is finite by Lemma~\ref{lem:S}(iv).
Throughout this proof, $O(\cdot)$ denotes a quantity bounded in absolute value by a constant depending only on $C_1$ and $C_2$, uniformly in $k \ge 0$, $\lambda_k \in [0, 1]$, and $\delta \le 1/20$.

Because $\{s_k,\nu_k\}$ is an orthonormal basis of $\R^2$, the Fermi coordinates $(\tau,n)$ in \eqref{eq:fermi} are Euclidean coordinates (the change of frame is an isometry): writing $x = x_k + U\begin{pmatrix}\tau\\ n\end{pmatrix}$ with orthogonal transformation matrix $U = (s_k, \nu_k)\in\R^{2\times2}$, the spatial Hessian is given by $\nabla_x^2 F_k = U H(\tau, n) U^{\mathsf T}$, where $H(\tau, n) = \begin{pmatrix} \partial_\tau^2 F_k & \partial_\tau\partial_n F_k \\ \partial_\tau\partial_n F_k & \partial_n^2 F_k \end{pmatrix}$.
Differentiating the template $F_k(\tau,n) = a(\tau) + b(\tau)n + \tfrac12 a''(\tau)n^2$ in \eqref{eq:template} gives the second partial derivatives
\[
\partial_n^2 F_k = a''(\tau), \qquad
\partial_\tau\partial_n F_k = b'(\tau) + a'''(\tau)n, \qquad
\partial_\tau^2 F_k = a''(\tau) + b''(\tau)n + \tfrac12 a''''(\tau)n^2.
\]
In this orthonormal frame, $H(\tau, n)$ decomposes into the on-axis curvature matrix $H(\tau, 0) = \nabla^2 F_k(\tau, 0)$ from \eqref{eq:Hess0} and an off-axis perturbation $\Delta H(\tau, n)$:
\[
H(\tau, n) = H(\tau, 0) + \Delta H(\tau, n), \qquad
\Delta H(\tau, n) = \begin{pmatrix} b''(\tau)n + \tfrac12 a''''(\tau)n^2 & a'''(\tau)n \\ a'''(\tau)n & 0 \end{pmatrix}.
\]
Since $U$ is orthogonal, conjugation by $U$ preserves the spectral norm, so
\[
\|\nabla_x^2 F_k\| = \|H(\tau, n)\| \le \|H(\tau, n)\|_F \le \|H(\tau, 0)\|_F + \|\Delta H(\tau, n)\|_F.
\]
We now bound each component on $[0,1]\times(-\delta,\delta)$:
\begin{enumerate}
\item \emph{On-axis Hessian $H(\tau, 0)$:}
By the support separation \eqref{eq:disjoint}, at any point $\tau \in [0, 1]$ at most one profile is non-zero, so $a''(\tau)$ equals one of the two profile values.
By Lemma~\ref{lem:trans}(i) and Lemma~\ref{lem:window}(i), each profile is bounded by its collar baseline plus spike height, whence
\[
0\le a''(\tau)\le\max\bigl(\alpha_k + \lambda_k M_k, \alpha_{k+1} + (1-\lambda_k)M_k\bigr)\le \max(\alpha_k, \alpha_{k+1}) + M_k.
\]
Substituting~\eqref{eq:Msum} and using $\tfrac32(\alpha_k+\alpha_{k+1}) > \max(\alpha_k,\alpha_{k+1})$ (valid since $\alpha_k\ge0$ and $\alpha_{k+1}>0$), we obtain
\[
0 \le a''(\tau) \le \max(\alpha_k, \alpha_{k+1}) + M_k < \frac{\alpha_{k+1}}{\delta} \le \frac{1}{\delta}
\]
everywhere on $[0, 1]$ (as $\alpha_{k+1}\le1$ by \eqref{eq:alphak}).
For the transverse tilt, since $g_k, g_{k+1}$, and $\nu_k$ are unit vectors, $|\ip{g_k}{\nu_k}|\le1$ and $|\ip{g_{k+1}}{\nu_k}|\le1$.
The transition in \eqref{eq:btrans} spans $[\delta, 1-\delta]$ of length $1-2\delta\ge9/10$, so by Lemma~\ref{lem:trans}(ii),
\[
|b'(\tau)| \le \frac{2C_1}{1-2\delta} \le \frac{20}{9}C_1 = O(1), \qquad
|b''(\tau)| \le \frac{2C_2}{(1-2\delta)^2} \le \frac{200}{81}C_2 = O(1).
\]
Consequently, the on-axis Hessian satisfies
\[
\|H(\tau, 0)\|_F \le \sqrt{2a''(\tau)^2 + 2b'(\tau)^2} \le \sqrt{2}/\delta + O(1) = O(1/\delta).
\]

\item \emph{Higher derivatives and scale compensation in $\Delta H(\tau, n)$:}
Differentiating the axial curvature \eqref{eq:Adef} gives
\[
a'''(\tau) = (\Lambda_k^-)'(\tau) - (\Lambda_k^+)'(1-\tau).
\]
At any $\tau$ at most one of the two terms is non-zero by the support separation \eqref{eq:disjoint}; inside a term the cap and spike derivatives overlap on $(\delta, 2\delta)$, and we add their bounds. Lemma~\ref{lem:trans}(ii) and Lemma~\ref{lem:window}(ii), with transition width $\delta$, give
\[
|a'''(\tau)| \le \frac{C_1\bigl(\max(\alpha_k,\alpha_{k+1})+M_k\bigr)}{\delta} \le \frac{C_1(1+1/\delta)}{\delta} = O(1/\delta^2).
\]
Similarly, $a''''(\tau) = (\Lambda_k^-)''(\tau) + (\Lambda_k^+)''(1-\tau)$, and Lemma~\ref{lem:trans}(ii) and Lemma~\ref{lem:window}(ii) with $m=2$ give $|a''''(\tau)| \le C_2(1+1/\delta)/\delta^2 = O(1/\delta^3)$.
Crucially, the off-axis displacement $|n| < \delta$ compensates for the growth of these higher derivatives:
\[
|a'''(\tau)n| \le \frac{C_1(1+1/\delta)}{\delta}\cdot\delta = C_1(1+1/\delta) = O(1/\delta),
\]
\[
|\tfrac12 a''''(\tau)n^2| \le \frac{C_2(1+1/\delta)}{2\delta^2}\cdot\delta^2 = \tfrac12 C_2(1+1/\delta) = O(1/\delta).
\]
Combined with $|b''(\tau)n| \le O(1)\cdot\delta = O(\delta)$, every entry of $\Delta H(\tau, n)$ is bounded by $O(1/\delta)$, which yields $\|\Delta H(\tau, n)\|_F = O(1/\delta)$.
\end{enumerate}
Combining the two bounds, we obtain $\|\nabla^2 F_k(\tau,n)\| \le \|H(\tau, 0)\|_F + \|\Delta H(\tau, n)\|_F \le K_{\mathcal U}/\delta$ on $[0,1]\times(-\delta,\delta)$ for a universal constant $K_{\mathcal U} > 0$ depending only on $C_1$ and $C_2$, uniform in $k\ge0$ and $\lambda_k\in[0,1]$.
On the backward extension $(-\delta,0]\times(-\delta,\delta)$ of segment $0$, $F_0(\tau,n)=-\tau$ by \eqref{eq:Q0collar} is linear, so $\nabla^2 F_0\equiv0$, which satisfies the bound trivially.
\end{proof}

\subsection{Global assembly: from \texorpdfstring{$F$ on $\mathcal U$ to $f$ on $\R^2$}{F on U to f on R\textasciicircum 2}}
\label{sec:core}

The local prescriptions on the pieces of $\mathcal U$ assemble into a single, globally well-defined $C^\infty$ function $F$ on $\mathcal U$:

\begin{corollary}[Well-defined interpolant on $\mathcal U$]
\label{cor:F}
The function $F: \mathcal U \to \R$, defined on each piece of $\mathcal U = \bigcup_{k\ge 1}\mathcal B_k \cup \bigcup_{k\ge 0}\mathcal S_k$ by
\begin{equation}
\label{eq:Fdef}
F(x) = \begin{cases}
F_k(\tau_k, n_k), & x \in \mathcal S_k \quad (k \ge 0), \\
Q_k(x), & x \in \mathcal B_k \quad (k \ge 1),
\end{cases}
\end{equation}
is well-defined and belongs to $C^\infty(\mathcal U)$.
\end{corollary}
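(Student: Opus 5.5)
We need to show that any two prescriptions agree on overlaps, and then that $F$ is smooth because it is locally given by one smooth function. The plan is to classify overlaps using Lemma~\ref{lem:seam}. By part (i), non-adjacent pieces have disjoint closures, so the only possible overlaps are the following:
\begin{itemize}
\item $\mathcal S_{k-1}\cap\mathcal S_k$;
\item $\mathcal B_k\cap\mathcal S_{k-1}$ and $\mathcal B_k\cap\mathcal S_k$, for $k\ge1$.
\end{itemize}
Distinct disks never meet, and neither do strips with index gap at least $2$.

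Next, on each such overlap we apply the corner inclusion \eqref{eq:cornerincl}: every overlap point satisfies $\tau_{k-1}>1-\delta$ and $\tau_k<\delta$. Lemma~\ref{lem:collar} then does the rest.
\begin{itemize}
\item On $\mathcal S_{k-1}\cap\{\tau_{k-1}\ge1-\delta\}$ we have $F_{k-1}=\widetilde F_{k-1}=Q_k$.
\item On $\mathcal S_k\cap\{\tau_k\le\delta\}$ we have $F_k=\widetilde F_k=Q_k$.
\end{itemize}
Hence at any point of any overlap, every applicable prescription equals $Q_k(x)$, and $F$ is well defined. For $k=1$ the left collar of $\mathcal S_0$ is not involved at a corner, since $x_0$ carries no disk. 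The backward tail of $\mathcal S_0$ meets no other piece, by part (i), because it lies within $\sqrt2\,\delta$ of segment $0$.

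For smoothness, fix $x\in\mathcal U$. Because $\mathcal U$ is open, every piece is open, and by Lemma~\ref{lem:seam}(i) some small ball $W\subset\mathcal U$ around $x$ meets at most two consecutive strips and at most one disk. If $W$ meets only one piece, $F$ on $W$ is the restriction of the $C^\infty$ function $\widetilde F_k$ or $Q_k$. Otherwise all pieces meeting $W$ are attached to a single vertex $x_k$, and we want to show $F\equiv$ a single smooth function on $W$. Suppose $W$ meets $\mathcal S_{k-1}$ and $\mathcal S_k$ but $x$ lies in only one of them. The point is still in some piece $P\ni x$, and near $x$ the value of $F$ is given by $P$'s prescription whenever the point lies in $P$. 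Points of $W$ outside $P$ lie in another piece $P'$, where $F$ is given by $P'$'s prescription.

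The cleanest route is to show that on $W$ every prescription coincides with one fixed smooth function. Concretely, the claim is that $F=\widetilde F_{k-1}$ on $W\cap(\mathcal S_{k-1}\cup\mathcal B_k)$, and so on; the simpler alternative is to shrink $W$. If $x\in\mathcal S_{k-1}\cap\mathcal S_k$, then $x$ is in the open corner wedge, so a smaller $W$ lies inside $\{\tau_{k-1}>1-\delta\}\cap\{\tau_k<\delta\}$. There $\widetilde F_{k-1}=Q_k=\widetilde F_k$, and $F$ on $W$ equals the single polynomial $Q_k$. If $x$ lies in exactly one piece $P$, then $W$ can be shrunk into $P$ by openness, and $F|_W$ equals $P$'s smooth prescription. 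Either way $F$ agrees on a neighborhood of each point with a $C^\infty$ function, so $F\in C^\infty(\mathcal U)$.

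The main obstacle is making sure the overlaps really sit inside the collars where the templates reduce exactly to $Q_k$. A naive argument would only give agreement on the collar sets $\mathcal C_k$ as defined, and not on all of $\mathcal S_{k-1}\cap\mathcal S_k$ or on $\mathcal B_k\cap\mathcal S_j$. This step is handled by the corner inclusion \eqref{eq:cornerincl}, together with the fact that Lemma~\ref{lem:collar} gives the identities $\widetilde F\equiv Q$ on entire half-planes rather than just on the collars.
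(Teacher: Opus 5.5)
Your well-definedness argument is the paper's: Lemma~\ref{lem:seam}(i) reduces the overlaps to $\mathcal S_{k-1}\cap\mathcal S_k$ and $\mathcal B_k\cap\mathcal S_j$ with $j\in\{k-1,k\}$. The corner inclusion \eqref{eq:cornerincl} puts them in the open wedge, and the half-plane identities of Lemma~\ref{lem:collar} make every prescription equal $Q_k$ there.

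The gap is in the smoothness half. The sentence ``because $\mathcal U$ is open, every piece is open'' is a non sequitur, and it is false for the strips. These are closed in the axial variable: $\tau_k\in[0,1]$ for $k\ge1$, and $\tau_0\in(-\delta,1]$. So your step ``if $x$ lies in exactly one piece $P$, then $W$ can be shrunk into $P$ by openness'' has no valid justification when $P$ is a strip. Your case split also skips points lying in a disk $\mathcal B_k$ and in exactly one of its incident strips; such a point is neither in two strips nor in a single piece.

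The missing observation is the one the paper uses. The only points of $\mathcal S_k$ outside its open interior $\mathcal S_k^\circ$ are the transverse end segments $\{\tau_k=0\}$ and $\{\tau_k=1\}$ (only $\{\tau_0=1\}$ for $k=0$). These lie in $\mathcal B_k$ and $\mathcal B_{k+1}$, since they are at distance $|n_k|<\delta$ from the vertex. Hence a point that lies in exactly one piece, that piece being a strip, is in the open set $\mathcal S_k^\circ$. Any point of a disk is handled by shrinking $W$ into that open disk, where $F=Q_k$ by well-definedness.

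With this observation your proof closes. The local case analysis and wedge-shrinking then become unnecessary, and the paper's route is shorter. $\mathcal U$ is covered by the open sets $\mathcal B_k$ ($k\ge1$) and $\mathcal S_k^\circ$ ($\tau_k\in(0,1)$, or $(-\delta,1)$ for $k=0$). Once well-definedness is known, $F$ equals the polynomial $Q_k$ on $\mathcal B_k$ and the $C^\infty$ function $\widetilde F_k$ on $\mathcal S_k^\circ$. Since smoothness is local, $F\in C^\infty(\mathcal U)$.

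A minor point: you justify that the backward tail of $\mathcal S_0$ meets no other piece ``by part (i)'', but that cites the wrong part. $\mathcal S_1$ and $\mathcal B_1$ are adjacent to $\mathcal S_0$, so part (i) does not apply to them. It is \eqref{eq:cornerincl} at $k=1$, which forces overlaps to have $\tau_0>1-\delta$, that keeps them away from the tail. The remark is not needed in any case.
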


\begin{proof}
By Lemma~\ref{lem:seam}(i), non-adjacent strips and disks do not intersect.
The only potential overlaps are between adjacent strips $\mathcal S_{k-1}$ and $\mathcal S_k$ ($k \ge 1$), and between a vertex disk $\mathcal B_k$ and its incident strips.
By Lemma~\ref{lem:seam}(ii), all such overlaps lie in $\{\tau_{k-1}>1-\delta\}\cap\{\tau_k<\delta\}$.
Lemma~\ref{lem:collar} therefore gives $\widetilde F_{k-1}\equiv Q_k\equiv\widetilde F_k$ throughout this wedge, so $F_{k-1}=F_k=Q_k$ on every overlap, while $F\equiv Q_k$ on $\mathcal B_k$. Thus $F$ is unambiguously defined on $\mathcal U$.

For smoothness, $\mathcal U$ is an open set covered by the open balls $\mathcal B_k$ ($k\ge 1$) and the open strips $\mathcal S_k^\circ = \{x_k+\tau s_k+n\nu_k:\tau\in(0,1),\,|n|<\delta\}$ for $k\ge 1$ (with $\tau\in(-\delta,1)$ for $k=0$).
The only boundary points of $\mathcal S_k$ not in $\mathcal S_k^\circ$ are the transverse end-segments, which lie in $\mathcal B_k$ and $\mathcal B_{k+1}$.
On each $\mathcal B_k$, $F=Q_k$ is a polynomial; on each $\mathcal S_k^\circ$, $F=\widetilde F_k$ is $C^\infty$ by Lemma~\ref{lem:collar}.
Because the prescriptions agree on overlaps, $F\in C^\infty(\mathcal U)$.
\end{proof}

We now extend $F$ (Corollary~\ref{cor:F}) from $\mathcal U$ to a $C^\infty$
function $f$ on all of $\R^2$, bounded below and with Lipschitz gradient,
constant outside $\mathcal U$. The exterior constant is the lowest of the
vertex heights, $\inf_kf_k$; that this is a genuine number is the next
lemma.

\begin{lemma}[Exterior limit value]
\label{lem:finf}
$(f_k)_{k\ge0}$ is strictly decreasing, and $f_\infty:=\inf_kf_k$ is
finite, with $f_k\downarrow f_\infty$.
\end{lemma}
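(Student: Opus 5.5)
Strict decrease is immediate from Lemma~\ref{lem:profile}(i): for every $k\ge0$ and every admissible split $\lambda_k\in[0,1]$, the axial profile $a_k$ is strictly decreasing on $[0,1]$, so $f_{k+1}=a_k(1)<a_k(0)=f_k$. A strictly decreasing real sequence converges to its infimum in $[-\infty,\infty)$, so the only content is that the infimum is finite. The plan is therefore to show that the total descent $\sum_{k\ge0}(f_k-f_{k+1})$ converges, using an explicit bound on each increment.

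Lemma~\ref{lem:profile}(iii) gives the increments in closed form:
\[
f_k-f_{k+1}=\int_0^1\tau\,a_k''(\tau)\,d\tau .
\]
Since $a_k''\ge0$ (Lemma~\ref{lem:admissible}(i)) and $\tau\le1$ on $[0,1]$, this is at most $\int_0^1 a_k''=\alpha_{k+1}$, where the last equality is the mass identity \eqref{eq:target}. The bound holds whatever value of $\lambda_k$ is chosen later, so it is uniform in the still-unfixed calibration parameters. Equivalently, the increment is $\alpha_{k+1}r(\lambda_k)$, and $r(\lambda_k)\in[0,1]$ is a normalized center of mass.

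By \eqref{eq:alphak} and \eqref{eq:costhetak}, $\alpha_{k+1}=\cos\theta_k=\sin(\pi/2^{k+1})\le\pi/2^{k+1}$. Telescoping then gives
\[
0<f_0-f_k=\sum_{j=0}^{k-1}(f_j-f_{j+1})\le\sum_{j\ge0}\frac{\pi}{2^{j+1}}=\pi,
\]
so $f_k\ge f_0-\pi=-\pi$ for all $k$. Hence $f_\infty=\inf_k f_k\ge-\pi$ is finite, and by monotonicity $f_k\downarrow f_\infty$.

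There is no substantive obstacle. The only point needing care is that the estimate must not depend on $\delta$ or $\lambda_k$, since these are fixed only in Section~\ref{sec:ls}. The crude bound $\tau\le1$ combined with the $\lambda$-independent mass identity \eqref{eq:target} achieves exactly this, so I would avoid using the sharper centroid bounds of Lemma~\ref{lem:admissible}(ii).
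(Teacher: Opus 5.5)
Your proposal is correct and follows the paper's proof. Strict decrease comes from Lemma~\ref{lem:profile}(i); the increment bound $f_k-f_{k+1}=\int_0^1\tau\,a''\le\int_0^1a''=\alpha_{k+1}=\sin(\pi/2^{k+1})\le\pi/2^{k+1}$ uses $a''\ge0$ and the mass identity \eqref{eq:target}; and summing gives $f_\infty\ge f_0-\pi=-\pi$.
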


\begin{proof}
Fix $k\ge0$. Lemma~\ref{lem:profile}(i) and~(iii) give
\[
0<f_k-f_{k+1}=\int_0^1\tau\,a''(\tau)\,d\tau\le\int_0^1a''(\tau)\,d\tau=\alpha_{k+1}=\cos\theta_k,
\]
the inequality because $\tau\le1$ and $a''\ge0$, and the last two equalities
by \eqref{eq:target} and \eqref{eq:alphak}. So $(f_k)$ is strictly
decreasing. By \eqref{eq:costhetak}, $\cos\theta_k=\sin(\pi/2^{k+1})\le\pi/2^{k+1}$,
whence
\[
\sum_{k\ge0}(f_k-f_{k+1})\le\sum_{k\ge0}\frac{\pi}{2^{k+1}}=\pi .
\]
A decreasing sequence with summable increments converges, and its limit is
its infimum; since $f_0=0$, the limit satisfies $f_\infty\ge-\pi$.
\end{proof}

With $f_\infty$ in hand, we now construct a smooth cutoff $\zeta: \R^2 \to [0, 1]$ supported in $\overline{\mathcal U}$ that switches the local interpolant $F$ on $\mathcal U$ to the exterior constant $f_\infty$ outside $\mathcal U$.
To ensure that the designed steps and gradients along the polyline $P$ remain completely undisturbed while confining the interpolant to $\mathcal U$, $\zeta$ must satisfy two geometric conditions:
\begin{enumerate}
\item $\zeta \equiv 1$ and $\nabla\zeta \equiv 0$ on the polyline $P$;
\item $\operatorname{supp}\zeta \subset \overline{\mathcal U}$, so that $f \equiv f_\infty$ on $\R^2 \setminus \mathcal U$.
\end{enumerate}

Rather than relying on a distance-to-$\mathcal U$ function (which is non-smooth across overlapping boundaries), we achieve this via a two-tier construction:
we first define an unnormalized non-negative bump sum $\Phi(x) \ge 0$ that exceeds $1$ everywhere along $P$ and vanishes outside $\mathcal U$, and then compose it with the smooth clamping switch $S$:
\begin{equation}
\label{eq:zetadef}
\zeta(x) = S(\Phi(x)).
\end{equation}
Because $S(u) = 1$ and $S'(u) = 0$ for all $u \ge 1$ (Lemma~\ref{lem:S}(i),(ii)), any region where $\Phi \ge 1$ automatically has $\zeta \equiv 1$ and $\nabla\zeta = S'(\Phi)\nabla\Phi \equiv 0$, without any need to normalize weights.
Since $S(u)=0$ for all $u\le 0$ (Lemma~\ref{lem:S}(i)), one has $\zeta\equiv 0$ wherever $\Phi=0$, hence on $\R^2\setminus\mathcal U$.

To build $\Phi(x)$, we introduce three elementary localized profiles using the transition $S(\cdot; t_0, t_1)$ and window $\Pi(\cdot; t_0, t_1, t_2, t_3)$ from \eqref{eq:Sparam}--\eqref{eq:Pidef}.
Fix the inner radius $r_\circ := \tfrac35\delta \in (0, \delta)$, used as the plateau radius of $V_k$, the plateau half-width of $W$, and the ramp width of $A_k$ at each endpoint.
The first and last of these roles are complementary: $V_k \equiv 1$ within distance $r_\circ$ of a vertex, while $A_k \equiv 1$ at axial distance $r_\circ$ or more from both endpoints, which is what makes the covering in Lemma~\ref{lem:core} exact.
\begin{itemize}
\item \emph{Vertex bump $V_k(x)$ ($k\ge 1$):}
For $k\ge 1$, let $r = \|x-x_k\|$ and set
\begin{equation}
\label{eq:Vk}
V_k(x) = 1 - S(r; r_\circ, \delta).
\end{equation}
Then $V_k \in C^\infty(\R^2)$ satisfies $V_k \equiv 1$ on the inner disk $B(x_k, r_\circ)$, smoothly ramps down on $r \in [r_\circ, \delta]$, and vanishes identically for $r \ge \delta$.

\item \emph{Transverse window $W(n)$:}
In the transverse coordinate $n$ across any segment, set
\begin{equation}
\label{eq:Wdef}
W(n) = 1 - S(|n|; r_\circ, \delta).
\end{equation}
Although $|n|$ is non-differentiable at $n=0$, $S(t; r_\circ, \delta) \equiv 0$ on $(-\infty, r_\circ]$, so $W(n) \equiv 1$ identically on the open interval $(-r_\circ, r_\circ)$.
Consequently $W \in C^\infty(\R)$ is an even function satisfying $W(n) \equiv 1$ for $|n| \le r_\circ$, smoothly ramping down on $|n| \in [r_\circ, \delta]$, and vanishing for $|n| \ge \delta$.

\item \emph{Axial window $A_k(\tau)$ ($k\ge 0$):}
Along segment $k$ ($k\ge 1$), set
\begin{equation}
\label{eq:Akdef}
A_k(\tau) = \Pi(\tau; 0, r_\circ, 1-r_\circ, 1).
\end{equation}
By Lemma~\ref{lem:window}, $A_k \in C^\infty(\R)$ ramps from $0$ to $1$ over $\tau \in [0, r_\circ]$, plateaus at $A_k \equiv 1$ on $[r_\circ, 1-r_\circ]$ (a non-degenerate interval, since $r_\circ \le 3/100 < 1/2$), and ramps back down to $0$ over $[1-r_\circ, 1]$.
For the initial segment $k=0$, no taper is needed at the starting point $x_0$; setting
\begin{equation}
\label{eq:A0def}
A_0(\tau) = \Pi(\tau; -\delta, -r_\circ, 1-r_\circ, 1)
\end{equation}
provides $A_0 \equiv 1$ across $[-r_\circ, 1-r_\circ]$, vanishing for $\tau \le -\delta$.
\end{itemize}

With $(\tau_k, n_k)$ denoting the Fermi coordinates of $x$ along segment $k$, we define the total bump sum
\begin{equation}
\label{eq:Phidef}
\Phi(x) = \sum_{k\ge 1} V_k(x) + \sum_{k\ge 0} A_k(\tau_k) W(n_k), \qquad \zeta(x) = S(\Phi(x)).
\end{equation}
By Lemma~\ref{lem:seam}(i), non-adjacent strips and disks have disjoint closures, so their bump supports are mutually disjoint; and near any point $x \in \R^2$, at most three terms in the sum \eqref{eq:Phidef} can be non-zero (a vertex bump and its two incident segment bumps).
Being a locally finite sum of $C^\infty$ functions composed with $S$, $\zeta \in C^\infty(\R^2)$.

\begin{lemma}[Core plateau and support]
\label{lem:core}
On the polyline $P$ and on each inner vertex disk $B(x_k, r_\circ)$ ($k \ge 1$), $\Phi \ge 1$, $\zeta \equiv 1$, and $\nabla\zeta \equiv 0$.
Moreover, $\operatorname{supp}\zeta \subset \overline{\mathcal U}$.
\end{lemma}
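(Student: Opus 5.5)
The plan is to establish the plateau claim first, then get $\zeta\equiv1$ and $\nabla\zeta\equiv0$ for free from the properties of $S$, and finally prove the support claim.

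\textbf{Step 1: $\Phi\ge1$ on the inner disks and on $P$.} Every summand of \eqref{eq:Phidef} is non-negative, since $V_k$, $W$ and $A_k$ all take values in $[0,1]$ (Lemmas~\ref{lem:trans} and~\ref{lem:window}). So it suffices to exhibit, at each point of the set, one summand equal to $1$.

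On $B(x_k,r_\circ)$ with $k\ge1$, we have $V_k\equiv1$ by \eqref{eq:Vk}.

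Now take $p\in P$ on segment $k$, written $p=x_k+\tau s_k$ with $\tau\in[0,1]$. Then $n_k(p)=0$, so $W(n_k)=1$, and there are two cases.
\begin{itemize}
\item If $\tau\in[r_\circ,1-r_\circ]$, then $A_k(\tau)=1$, so the segment-$k$ summand equals $1$. For $k=0$ the plateau of $A_0$ is $[-r_\circ,1-r_\circ]$, which also covers $\tau\in[0,r_\circ)$ at $x_0$.
\item Otherwise $\tau<r_\circ$ (with $k\ge1$) or $\tau>1-r_\circ$. Then $\|p-x_k\|=\tau<r_\circ$ or $\|p-x_{k+1}\|=1-\tau<r_\circ$, so $p$ lies in an inner vertex disk and $V_k(p)=1$ or $V_{k+1}(p)=1$.
\end{itemize}
This is exactly the complementarity of the $r_\circ$ roles noted before the lemma, and it gives $\Phi\ge1$ on all of $P$.

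\textbf{Step 2: $\zeta\equiv1$ and $\nabla\zeta\equiv0$.} We have $\zeta=S\circ\Phi$ with $\Phi\in C^\infty$ (a locally finite sum). Lemma~\ref{lem:S}(i),(ii) gives $S(u)=1$ and $S'(u)=0$ for $u\ge1$. So at any point where $\Phi\ge1$, the chain rule yields $\zeta=1$ and $\nabla\zeta=S'(\Phi)\nabla\Phi=0$. No open neighborhood is needed here, since the chain rule is pointwise. On the inner disks this holds on an open set anyway.

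\textbf{Step 3: support.} It suffices to show $\{\Phi>0\}\subset\mathcal U$. Then $\zeta=0$ off $\mathcal U$ because $S(0)=0$, so $\{\zeta\ne0\}\subset\mathcal U$, and taking closures gives $\operatorname{supp}\zeta\subset\overline{\mathcal U}$.
\begin{itemize}
\item If $V_k(x)>0$, then $\|x-x_k\|<\delta$, so $x\in\mathcal B_k$.
\item If $A_k(\tau_k)W(n_k)>0$, then $|n_k|<\delta$ by \eqref{eq:Wdef}. Also $\tau_k\in(0,1)$ for $k\ge1$, or $\tau_0\in(-\delta,1)$ for $k=0$, since $A_k$ vanishes outside the window's open support $(t_0,t_3)$ (Lemma~\ref{lem:window}). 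Hence $x\in\mathcal S_k$ by its definition, respectively \eqref{eq:S0}.
\end{itemize}

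The main obstacle is only bookkeeping of boundary cases: points of $P$ near $x_0$ (handled by the extended plateau of $A_0$), and strictness at the support ends, namely $W(n)=0$ exactly at $|n|=\delta$ and $A_k=0$ at $\tau\in\{0,1\}$. Both match the open/half-open definitions of $\mathcal B_k$ and $\mathcal S_k$.
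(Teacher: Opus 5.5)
Your proof is correct and follows essentially the same route as the paper. In both, $V_k\equiv1$ on the inner disks; each segment is covered by the plateaus of $V_k$, $A_kW$ and $V_{k+1}$, with the extended plateau of $A_0$ handling $x_0$; the pointwise chain rule with $S\equiv1$, $S'\equiv0$ on $[1,\infty)$ gives $\zeta\equiv1$, $\nabla\zeta\equiv0$; and the support claim comes from localizing the summands. The one difference is that your support step proves the slightly sharper inclusion $\{\Phi>0\}\subset\mathcal U$, so $\zeta=0$ on all of $\R^2\setminus\mathcal U$, which is the form actually used in \eqref{eq:fdef}. The paper records only $\operatorname{supp}V_k\subset\overline{\mathcal B}_k$ and $\operatorname{supp}(A_kW)\subset\overline{\mathcal S}_k$.
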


\begin{proof}
First, $\operatorname{supp} V_k \subset \overline{\mathcal B}_k$ and $\operatorname{supp}(A_k W) \subset \overline{\mathcal S}_k$.
Since $\mathcal U = \bigcup_{k\ge 1}\mathcal B_k \cup \bigcup_{k\ge 0}\mathcal S_k$ by \eqref{eq:Udef}, the support of $\Phi$, and hence the support of $\zeta = S(\Phi)$, is contained in $\overline{\mathcal U}$.

Second, on each inner vertex disk $B(x_k, r_\circ)$ ($k \ge 1$), the radius satisfies $r = \|x-x_k\| < r_\circ$, so $V_k(x) = 1$ and $\Phi(x) \ge 1$.
Third, consider any point $x \in P$, which lies on the centerline of some segment $k$ ($n_k = 0$ and $\tau \in [0, 1]$).
Since $|n_k| = 0 \le r_\circ$, $W(0) = 1$.
For the initial segment $k=0$, $A_0 \equiv 1$ on $[0, 1-r_\circ]$ by \eqref{eq:A0def}, while on $[1-r_\circ, 1]$ the right vertex disk gives $\|x-x_1\| = 1-\tau \le r_\circ$, so $V_1(x) = 1$; thus $\Phi \ge 1$ on segment $0$.
For any segment $k \ge 1$, the axial coordinate $\tau \in [0, 1]$ is covered by three closed regions:
\begin{itemize}
\item On the left end $\tau \in [0, r_\circ]$, the Euclidean distance to the vertex $x_k$ is $\|x-x_k\| = \tau \le r_\circ$, so $V_k(x) = 1$ by Lemma~\ref{lem:trans}(i).
\item On the middle interval $\tau \in [r_\circ, 1-r_\circ]$, $A_k(\tau) = 1$ by Lemma~\ref{lem:window}(i), so the product $A_k(\tau)W(0) = 1$.
\item On the right end $\tau \in [1-r_\circ, 1]$, the Euclidean distance to $x_{k+1}$ is $\|x-x_{k+1}\| = 1-\tau \le r_\circ$, so $V_{k+1}(x) = 1$.
\end{itemize}
In all cases, at least one non-negative summand in \eqref{eq:Phidef} equals $1$, so $\Phi(x) \ge 1$.
Since $S(u) = 1$ and $S'(u) = 0$ for all $u \ge 1$ by Lemma~\ref{lem:S}(i),(ii), we conclude that $\zeta(x) = S(\Phi(x)) = 1$ and $\nabla\zeta(x) = S'(\Phi(x))\nabla\Phi(x) = 0$ everywhere on $P \cup \bigcup_{k\ge1} B(x_k, r_\circ)$.
\end{proof}

Define the global interpolant $f: \R^2 \to \R$ by
\begin{equation}
\label{eq:fdef}
f(x) = \begin{cases}
\zeta(x) F(x) + (1-\zeta(x))f_\infty, & x \in \mathcal U, \\
f_\infty, & x \in \R^2\setminus \mathcal U.
\end{cases}
\end{equation}

\begin{lemma}[Global extension and regularity]
\label{lem:global}
The function $f$ in \eqref{eq:fdef} satisfies:
\begin{enumerate}
\item[(i)] $f\in C^\infty(\R^2)$, and along the polyline $P$, $f\equiv F$ and $\nabla f=\nabla F$;
\item[(ii)] $f$ is bounded below: $f(x)\ge f_\infty-2\delta$ for all $x\in\R^2$;
\item[(iii)] $\nabla f$ is Lipschitz continuous on $\R^2$, with $\|\nabla^2f\|\le K/\delta^2$ for a universal constant $K > 0$.
\end{enumerate}
\end{lemma}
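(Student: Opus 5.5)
The plan is to write $f=f_\infty+G$ with $G:=\zeta\,(F-f_\infty)$ on $\mathcal U$ and $G:=0$ on $\R^2\setminus\mathcal U$. Each item is then reduced to estimates on $\zeta$ and on $F$ over the pieces of $\mathcal U$, where Corollary~\ref{cor:F}, Lemma~\ref{lem:core} and Lemma~\ref{lem:hessU} already supply the needed facts.

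\emph{(i) Smoothness.} On the open set $\mathcal U$, $G$ is a product of $C^\infty$ functions by Corollary~\ref{cor:F} and the smoothness of $\zeta$. On the open set $\R^2\setminus\overline{\mathcal U}$, $G\equiv0$.

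The delicate points are those $p\in\partial\mathcal U$. I expect this step to be the main obstacle, because $F$ is only defined on $\mathcal U$. I would proceed in four steps.
\begin{itemize}
\item[1.] By Lemma~\ref{lem:seam}(i), a small ball $B(p,\rho)$ meets at most two consecutive strips and one disk.
\item[2.] On each such piece $F$ coincides with an everywhere-defined $C^\infty$ function ($\widetilde F_j$ from Lemma~\ref{lem:collar}, or the polynomial $Q_k$). Hence every derivative $\partial^\beta F$ is bounded on $B(p,\rho)\cap\mathcal U$.
\item[3.] Each building block $V_k$, $W$, $A_k$ is flat to all orders where it reaches $0$, because $S$ has vanishing derivatives of all orders at the endpoints of its transition (Lemma~\ref{lem:S}). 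So $\Phi$ and all its derivatives tend to $0$ as $x\to\partial\mathcal U$. The same holds for $\zeta=S\circ\Phi$, since $S$ is flat at $0$ (Faà di Bruno).
\item[4.] By Leibniz, every $\partial^\alpha G$ on $\mathcal U$ tends to $0$ at $p$. The standard lemma (a function $C^\infty$ off a closed set whose derivatives all extend continuously by $0$ is $C^\infty$) then gives $G\in C^\infty(\R^2)$.
\end{itemize}
On $P$, Lemma~\ref{lem:core} gives $\zeta=1$ and $\nabla\zeta=0$. Hence $f=F$ and $\nabla f=\zeta\nabla F+(F-f_\infty)\nabla\zeta=\nabla F$ there.

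\emph{(ii) Lower bound.} Since $\zeta\in[0,1]$, we have $f\ge f_\infty+\min(0,F-f_\infty)$, so it suffices to show $F\ge f_\infty-2\delta$ on $\mathcal U$.
\begin{itemize}
\item On $\mathcal S_k$, use $a''\ge0$ (Lemma~\ref{lem:admissible}(i)) and $|b|\le1$ (a convex combination of inner products of unit vectors). Then $F_k\ge a(\tau)-|n|\ge f_{k+1}-\delta$, because $a$ is decreasing by Lemma~\ref{lem:profile}(i).
\item On the backward tail of $\mathcal S_0$, $F=-\tau\ge0$.
\item On $\mathcal B_k$, $Q_k\ge f_k-\|x-x_k\|\ge f_k-\delta$.
\end{itemize}
Finally $f_j\ge f_\infty$ by Lemma~\ref{lem:finf}, which gives the bound.

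\emph{(iii) Hessian bound.} Expand
\[
\nabla^2 f=\zeta\nabla^2F+\nabla\zeta\otimes\nabla F+\nabla F\otimes\nabla\zeta+(F-f_\infty)\nabla^2\zeta .
\]
The four factors are bounded as follows.
\begin{itemize}
\item $\|\nabla^2F\|\le K_{\mathcal U}/\delta$, by Lemma~\ref{lem:hessU} on strips and by $\alpha_k\le1$ on disks.
\item $\|\nabla F\|=O(1)$. Indeed $\partial_nF_k=b+a''n$ with $|a''n|<1$, and $\partial_\tau F_k=a'+b'n+\tfrac12a'''n^2$ with $|a'|\le\alpha_{k+1}\le1$ and $|a'''|n^2=O(1)$ by the estimates in that proof. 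On disks, $\|\nabla Q_k\|\le 1+\delta$.
\item $|F-f_\infty|=O(1)$, since $f_k-f_\infty\le\pi$ by Lemma~\ref{lem:finf} and the transverse terms are $O(\delta)$.
\item $\|\nabla\zeta\|=O(1/\delta)$ and $\|\nabla^2\zeta\|=O(1/\delta^2)$. The ramps in $V_k$, $W$, $A_k$ have widths $\tfrac25\delta$ or $\tfrac35\delta$, at most three terms of $\Phi$ are nonzero locally, and $S',S''$ are bounded.
\end{itemize}
Summing gives $\|\nabla^2f\|\le K/\delta^2$ on $\mathcal U$, and $\nabla^2f=0$ outside $\mathcal U$. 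A globally bounded Hessian on the convex set $\R^2$ yields a Lipschitz gradient by the mean value inequality.
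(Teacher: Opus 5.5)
Your proposal is correct. Parts (ii) and (iii) follow the paper's proof closely: the same convex-combination reduction to $F\ge f_\infty-2\delta$ on each piece, and the same expansion of $\nabla^2 f$ with the same factor bounds. The one small difference there is how you bound $\|\nabla F\|$. You differentiate the template directly, using $\partial_nF_k=b+a''n$ and $\partial_\tau F_k=a'+b'n+\tfrac12a'''n^2$. The paper instead integrates the Hessian bound along a short segment from the polyline, where $\|\nabla F\|\le\sqrt2$. Your version is cleaner and does not need that segment to stay inside $\mathcal U$.

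The genuine difference is in (i), at boundary points $p\in\partial\mathcal U$. The paper uses a three-case analysis to produce a single smooth $G$ on a ball $V\ni p$ with $F=G$ on $\mathcal U\cap V$. In the two-strip case this needs an equality-case refinement of \eqref{eq:corner}, placing $p$ in the open wedge $\{\tau_{k-1}>1-\delta\}\cap\{\tau_k<\delta\}$. Then $f=f_\infty+\zeta(G-f_\infty)$ is visibly smooth near $p$.

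You instead let the cutoff do the work. Every derivative of $\zeta$ vanishes at $p$. This already follows from $\Phi(p)=0$ and the flatness of $S$ at $0$, so the flatness of the individual blocks $V_k,W,A_k$ is not even needed. Every derivative of $F$ stays bounded near $p$, because $F$ locally agrees with one of finitely many globally smooth functions. Leibniz plus the flat-extension lemma then finish.

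Your route does not care how the pieces meet at $\partial\mathcal U$ and needs no wedge geometry. Its only cost is the extension lemma. That lemma is valid for an arbitrary closed set here because $G=0$ off $\mathcal U$: apply the mean value theorem on $[q,p+h]$, where $q$ is the last point of $[p,p+h]$ not in $\mathcal U$, then induct on the order of the derivative. The paper's route is more elementary, since it exhibits $f$ near each boundary point directly as a product of smooth functions.
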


\begin{proof}
(i) \emph{Smoothness and polyline agreement.}
The function $\zeta$ belongs to $C^\infty(\R^2)$, and $\zeta\equiv 0$ on $\R^2\setminus\overline{\mathcal U}$ by Lemma~\ref{lem:core}, so $f\equiv f_\infty$ is smooth on the open exterior.
On $\mathcal U$, $F$ is $C^\infty$ (Corollary~\ref{cor:F}) and $\zeta$ is $C^\infty$, so $f$ is smooth on $\mathcal U$.
It remains to check the interface $\partial\mathcal U$.

Let $p\in\partial\mathcal U$. By Lemma~\ref{lem:seam}(i), some neighborhood of $p$ meets at most two consecutive strips and at most one vertex disk. Shrinking that neighborhood to a ball $V$ about $p$, we produce $G\in C^\infty(V)$ with $F=G$ on $\mathcal U\cap V$, whence $f=f_\infty+\zeta(G-f_\infty)$ on $V$.

If $p$ lies in $\overline{\mathcal S_{k-1}}\cap\overline{\mathcal S_k}$ for some $k\ge 1$, then $\operatorname{dist}(p,\ell_{k-1})\le\delta$ and $\operatorname{dist}(p,\ell_k)\le\delta$, so $\tau_k(p)\le\delta$ and $\tau_{k-1}(p)\ge 1-\delta$ by the same comparison as \eqref{eq:corner}, now with non-strict inequalities.
With $q$ and $b$ as in the proof of \eqref{eq:corner}, the comparison $\tau_k(p)=\ip{p-q}{s_k}-b\ip{s_{k-1}}{s_k}\le\operatorname{dist}(p,\ell_{k-1})$ becomes an equality only if $b=0$ and $p-q$ is a nonnegative multiple of $s_k$, hence only if $p=x_k+\delta s_k$, which lies in the interior of $\mathcal U$; symmetrically, $\tau_{k-1}(p)=1-\delta$ would force $p=x_k-\delta s_{k-1}$, likewise interior.
Thus $p$ lies in the open wedge $\{\tau_{k-1}>1-\delta\}\cap\{\tau_k<\delta\}$. Shrinking $V$ into this wedge, Lemma~\ref{lem:collar} gives $\widetilde F_{k-1}\equiv Q_k\equiv\widetilde F_k$ on $V$, so $F=Q_k$ on $\mathcal U\cap V$. Take $G=Q_k$.

If $p$ lies in exactly one closed strip $\overline{\mathcal S_j}$, shrink $V$ to miss every other strip (Lemma~\ref{lem:seam}(i)). Any disk met by $V$ is incident to $\mathcal S_j$: $x\in\mathcal B_j$ implies $\tau_j(x)\le\|x-x_j\|<\delta$, while $x\in\mathcal B_{j+1}$ implies $1-\tau_j(x)\le\|x-x_{j+1}\|<\delta$, so $\widetilde F_j\equiv Q_j$ on $\mathcal B_j$ and $\widetilde F_j\equiv Q_{j+1}$ on $\mathcal B_{j+1}$ by Lemma~\ref{lem:collar}. Thus $F=\widetilde F_j$ on $\mathcal U\cap V$. Take $G=\widetilde F_j$.

If $p$ lies in no closed strip, then $p\in\partial\mathcal B_k$ for some $k\ge 1$. Shrinking $V$ so that $\mathcal U\cap V\subset\mathcal B_k$, one has $F=Q_k$ on $\mathcal U\cap V$. Take $G=Q_k$.

In all cases $f$ is $C^\infty$ near $p$, so $f\in C^\infty(\R^2)$.

Along the polyline $P \subset \mathcal U$, Lemma~\ref{lem:core} gives $\zeta \equiv 1$ and $\nabla\zeta \equiv 0$.
Differentiating $f = f_\infty + \zeta(F - f_\infty)$ along $P$ gives
\[
f = F, \qquad \nabla f = \zeta\nabla F + (F - f_\infty)\nabla\zeta = \nabla F.
\]

(ii) \emph{Lower bound.}
On $\mathcal U$, $f = \zeta F + (1-\zeta)f_\infty$ is a convex combination of $F$ and $f_\infty$ (since $\zeta \in [0, 1]$).
Because $f_\infty \ge f_\infty - 2\delta$ trivially, it suffices to show $F \ge f_\infty - 2\delta$ on each piece of $\mathcal U$:
\begin{itemize}
\item \emph{Vertex disks $\mathcal B_k$ ($k\ge 1$):}
Dropping the non-negative quadratic term $\frac12\alpha_k\|x-x_k\|^2$ in $Q_k$ \eqref{eq:Qk} gives
\[
Q_k(x) \ge f_k + \ip{g_k}{x-x_k} \ge f_k - \|g_k\|\|x-x_k\| = f_k - \|x-x_k\|,
\]
since $\|g_k\| = 1$.
For $x \in \mathcal B_k = B(x_k, \delta)$, $\|x-x_k\| < \delta$.
Since $f_k \ge f_\infty$ (Lemma~\ref{lem:finf}),
\[
Q_k(x) > f_k - \delta \ge f_\infty - \delta > f_\infty - 2\delta.
\]

\item \emph{Segment strips $\mathcal S_k$ ($k\ge 0$):}
For every $k\ge 0$, the function on $\mathcal S_k$ is given by the strip template $F_k(\tau, n) = a(\tau) + b(\tau)n + \frac12 a''(\tau)n^2$.
By Lemma~\ref{lem:profile}(i), $a$ is strictly decreasing on $[0, 1]$ (for $k=0$, this decrease extends back onto $[-\delta, 0]$ where $a' \equiv -1$).
Hence throughout $\mathcal S_k$, $a(\tau) \ge a(1) = f_{k+1} \ge f_\infty$.
The transverse tilt $b(\tau)$ is a convex combination of $\ip{g_k}{\nu_k}$ and $\ip{g_{k+1}}{\nu_k}$, so $|b(\tau)| \le 1$, giving $b(\tau)n \ge -\delta$ since $|n| < \delta$.
Because the axial curvature satisfies $a'' \ge 0$, the quadratic term $\frac12 a''(\tau)n^2 \ge 0$ is non-negative.
Combining these estimates yields
\[
F_k(\tau, n) \ge f_{k+1} - \delta \ge f_\infty - \delta > f_\infty - 2\delta.
\]
\end{itemize}
Therefore, $F \ge f_\infty - 2\delta$ everywhere on $\mathcal U$.
Taking the convex combination, $f(x) \ge f_\infty - 2\delta$ for all $x \in \R^2$.

(iii) \emph{Lipschitz gradient bound.}
Differentiating the global interpolant $f = f_\infty + \zeta(F - f_\infty)$ gives the gradient vector field $\nabla f = \zeta \nabla F + (F - f_\infty)\nabla\zeta$.
Differentiating once more, the spatial Hessian on $\mathcal U$ is given pointwise by
\begin{equation}
\label{eq:hess-pointwise}
\nabla^2 f = \zeta \nabla^2 F + \nabla\zeta \otimes \nabla F + \nabla F \otimes \nabla\zeta + (F - f_\infty)\nabla^2\zeta.
\end{equation}
Outside $\overline{\mathcal U}$, $\zeta \equiv 0$ identically, so $\nabla^2 f \equiv 0$.
Taking spectral norms on $\mathcal U$, the triangle inequality gives
\begin{equation}
\label{eq:hess-norm-bound}
\|\nabla^2 f\| \le \|\zeta\|_\infty \|\nabla^2 F\| + 2\|\nabla\zeta\|\|\nabla F\| + \|F - f_\infty\|_\infty \|\nabla^2\zeta\|.
\end{equation}
We now bound each factor across $\mathcal U$:
\begin{enumerate}
\item \emph{Regularity and amplitude of $F$:}
By Lemma~\ref{lem:hessU} on each strip and $\nabla^2 Q_k = \alpha_k I$ ($\alpha_k \le 1$) on vertex disks, $\sup_{\mathcal U}\|\nabla^2 F\| \le K_{\mathcal U}/\delta = O(1/\delta)$.
On the backward tail $[-\delta, 0] \times (-\delta, \delta)$ around $x_0$, $F_0(\tau, n) = -\tau \le \delta$.
Because $(f_k)$ is strictly decreasing with $f_0 = 0$ (Lemma~\ref{lem:finf}), $a_k(\tau) \le f_k \le f_0 = 0$ for $\tau \in [0, 1]$ on each strip $\mathcal S_k$.
Combined with the transverse bound $|b(\tau)n| + \frac12 a''(\tau)n^2 \le \delta + \frac12\delta < 2\delta$ (since $a'' < 1/\delta$ by Lemma~\ref{lem:hessU}), $F_k \le a_k(\tau) + 2\delta \le 2\delta$, while on vertex disks $Q_k \le f_k + \delta + \frac12\delta^2 < 2\delta$.
Together with the lower bound $F \ge f_\infty - 2\delta$ from part~(ii), $F(x) \in [f_\infty - 2\delta, 2\delta]$ across $\mathcal U$, and consequently $\|F - f_\infty\|_{L^\infty(\mathcal U)} \le |f_\infty| + 2\delta = O(1)$.
On the backward extension $(-\delta, 0] \times (-\delta, \delta)$, $F_0(\tau, n) = -\tau$ by \eqref{eq:Q0collar} is linear with $\|\nabla F_0\| = \|s_0\| = 1$.
On the remainder of $\mathcal U$, each point $x$ connects to the polyline $P$ via a transverse or radial segment of length at most $2\delta$ within $\mathcal U$.
Along $P$, $\|\nabla F\| \le \sqrt2$, so the mean value theorem gives
\[
\|\nabla F\|_\infty \le \max\bigl(1, \sqrt2 + \|\nabla^2 F\|(2\delta)\bigr) \le \sqrt2 + 2K_{\mathcal U} = O(1).
\]

\item \emph{Cutoff derivatives:}
Each constituent bump in \eqref{eq:Phidef} is defined via the smooth transition $S$ on intervals of length scale $\delta$.
Summing at most three overlapping bumps gives $\|\nabla\Phi\|_\infty = O(1/\delta)$ and $\|\nabla^2\Phi\|_\infty = O(1/\delta^2)$.
With $\zeta = S(\Phi)$ and $\|S'\|_\infty, \|S''\|_\infty < \infty$ (Lemma~\ref{lem:S}(iv)), the chain rule gives $\|\nabla\zeta\|_\infty \le \|S'\|_\infty \|\nabla\Phi\|_\infty = O(1/\delta)$, and
\[
\nabla^2\zeta = S''(\Phi)\nabla\Phi\otimes\nabla\Phi + S'(\Phi)\nabla^2\Phi,
\]
which yields $\|\nabla^2\zeta\|_\infty \le \|S''\|_\infty \|\nabla\Phi\|_\infty^2 + \|S'\|_\infty \|\nabla^2\Phi\|_\infty \le O(1/\delta^2) + O(1/\delta^2) = O(1/\delta^2)$.
\end{enumerate}
Substituting these bounds into \eqref{eq:hess-norm-bound}, and using $\|\zeta\|_\infty \le 1$ (Lemma~\ref{lem:S}(i)), gives
\[
\|\nabla^2 f(x)\| \le O(1/\delta) + O(1/\delta) + O(1/\delta^2) \le \frac{K}{\delta^2} \quad \text{for all } x \in \R^2,
\]
where $K > 0$ is a universal constant.
Because $\R^2$ is convex and $f \in C^\infty(\R^2)$, the mean value theorem yields
$\|\nabla f(x) - \nabla f(y)\| \le (K/\delta^2)\|x-y\|$ for all $x, y \in \R^2$,
so $\nabla f$ is Lipschitz continuous on $\R^2$.
\end{proof}

\section{Parameter calibration and proof of Theorem~\ref{thm:main}}
\label{sec:ls}

The objective function $f$ constructed in Section~\ref{sec:f} belongs to a family, indexed by the tube width $\delta$ and by the split parameters $\lambda_k$ on each segment.
This section calibrates $\delta$ and the parameters $\lambda_k$ so that the designed steps satisfy the Wolfe, Armijo, and Goldstein conditions for the given $(c_1,c_2)$, verifies that each step is the first local minimizer along its search ray, and completes the proof of Theorem~\ref{thm:main}.

\subsection{Parameter calibration}
\label{sec:calibration}

\begin{lemma}[Parameter calibration]
\label{lem:armijo}
Given $0<c_1<c_2<1$, set
\begin{equation}
\label{eq:rhodef}
\rho:=\frac{c_1+c_2}2\in(c_1,c_2).
\end{equation}
Then there exist $\delta\le1/20$ and, on each segment $k\ge0$, a split
parameter $\lambda_k\in(0,1)$ in \eqref{eq:Adef} such that every segment
realizes the same Armijo ratio
\[
\frac{f_{k+1}-f_k}{\eta_k}=\rho .
\]
In particular Armijo~\eqref{eq:armijo} and Goldstein~\eqref{eq:goldstein}
both hold, strictly. The width $\delta$ is the same for every $k$; only
the split parameter $\lambda_k$ depends on $k$.
\end{lemma}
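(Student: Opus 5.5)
The plan is to reduce the lemma to the center-of-mass controllability already established. By Lemma~\ref{lem:profile}(iii), on each segment $k$ the Armijo ratio equals
\[
\frac{f_{k+1}-f_k}{\eta_k}=r_k(\lambda_k)=\frac{1}{\alpha_{k+1}}\int_0^1\tau\,a''_{\lambda_k}(\tau)\,d\tau .
\]
So it suffices to choose $\delta$ so that $\rho$ lies strictly inside the range of $r_k$ for every $k$, and then invert $r_k$ segment by segment.

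First I will fix $\delta$. Since $0<c_1<\rho<c_2<1$, I will take
\[
\delta:=\min\Bigl(\tfrac1{20},\ \tfrac{\rho}{6},\ \tfrac{1-\rho}{6}\Bigr)>0.
\]
Then $6\delta\le\rho\le 1-6\delta$. This choice depends only on $(c_1,c_2)$, so it is uniform in $k$.

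Next, on each segment $k$, Lemma~\ref{lem:admissible}(ii) gives that $r_k$ is affine and strictly decreasing on $[0,1]$, with $r_k(1)<6\delta\le\rho$ and $r_k(0)>1-6\delta\ge\rho$. The bounds are strict and independent of $k$. By the intermediate value theorem (or by solving the affine equation explicitly), there is a unique $\lambda_k$ with $r_k(\lambda_k)=\rho$:
\[
\lambda_k=\frac{r_k(0)-\rho}{r_k(0)-r_k(1)}.
\]
Strictness of both endpoint inequalities puts $\lambda_k$ in the open interval $(0,1)$. The segments are coupled only through the additive constant $f_k$, as noted after \eqref{eq:adef}, and $r_k$ depends only on $\alpha_k,\alpha_{k+1},\delta,\lambda_k$. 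So these choices can be made independently for each $k$, or inductively, without circularity.

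Finally, I will read off the line-search conditions. Since $\eta_k<0$ (Lemma~\ref{lem:orbit}(ii)), the identity $(f_{k+1}-f_k)/\eta_k=\rho$ together with $c_1<\rho$ gives $f_{k+1}-f_k=\rho\eta_k<c_1\eta_k$. This is strict Armijo \eqref{eq:armijo}, because $f(x_{k+1})=f_{k+1}$ and $f(x_k)=f_k$ along $P$ by Lemma~\ref{lem:global}(i). Likewise $\rho<c_2$ gives $f_{k+1}-f_k>c_2\eta_k$. These two inequalities are the strict Goldstein conditions \eqref{eq:goldstein}.

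I do not expect a serious obstacle. The only point needing care is that the range bounds in Lemma~\ref{lem:admissible}(ii) hold uniformly in $k$, including $k=0$ where $\alpha_0=0$, so that a single $\delta$ works everywhere. Those bounds were stated for all $k\ge0$, so this is already covered.
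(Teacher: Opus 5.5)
Your proposal is correct and follows the paper's proof. You reduce the Armijo ratio to the center of mass $r(\lambda)$ via Lemma~\ref{lem:profile}(iii), invoke the $k$-uniform bounds $r(1)<6\delta$ and $r(0)>1-6\delta$ from Lemma~\ref{lem:admissible}(ii), and invert the affine map to get $\lambda_k=(r(0)-\rho)/(r(0)-r(1))\in(0,1)$. The only difference is the choice of $\delta$: the paper takes $\delta=\min(1/20,c_1/6,(1-c_2)/6)$, while your $\delta=\min(1/20,\rho/6,(1-\rho)/6)$ is less restrictive and equally valid (it even shows $\delta$ need not shrink when only $c_1\to0$ or only $c_2\to1$).
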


\begin{proof}
By Lemma~\ref{lem:orbit}(ii), each designed step is a descent step with $\eta_k = \ip{g_k}{s_k} < 0$.
Dividing by $\eta_k$ reverses inequalities, so a step whose ratio equals $\rho$ satisfies Armijo \eqref{eq:armijo} exactly when $\rho \ge c_1$, and Goldstein \eqref{eq:goldstein} exactly when $c_1 \le \rho \le c_2$; both hold strictly by \eqref{eq:rhodef}.
It therefore suffices to produce, on each segment, a split parameter realizing the ratio $\rho$.

Fix $k\ge0$. By Lemma~\ref{lem:profile}(iii), the ratio produced by the split $\lambda$ is exactly the normalized center of mass of the axial curvature,
\begin{equation}
\label{eq:armijoratio}
\frac{f_{k+1}-f_k}{\eta_k} = \frac1{\alpha_{k+1}}\int_0^1\tau\,a''_\lambda(\tau)\,d\tau = r(\lambda),
\end{equation}
and by Lemma~\ref{lem:admissible}(ii) the function $r$ is affine on $[0,1]$ with $r(1)<6\delta$ and $r(0)>1-6\delta$. Now choose
\begin{equation}
\label{eq:delta-armijo}
\delta = \min\Bigl(\frac1{20},\,\frac{c_1}6,\,\frac{1-c_2}6\Bigr),
\end{equation}
so that, by \eqref{eq:rhodef},
\[
r(1) < 6\delta \le c_1 < \rho < c_2 \le 1 - 6\delta < r(0).
\]
Since $r$ is affine with $r(1)<r(0)$, the equation $r(\lambda)=\rho$ has the unique solution
\begin{equation}
\label{eq:lambdachoice}
\lambda_k = \frac{r(0)-\rho}{r(0)-r(1)},
\end{equation}
and the displayed inequalities place $r(0)-\rho$ strictly between $0$ and $r(0)-r(1)$, so $\lambda_k\in(0,1)$.
The bounds on $r(0),r(1)$ are uniform in $k$, so the same $\delta$ serves every segment; only $\lambda_k$ varies with $k$.
Hence every segment realizes the ratio $\rho$, and Armijo and Goldstein hold strictly.
\end{proof}

\subsection{Line-search verification and proof of Theorem~\ref{thm:main}}
\label{sec:proof}

\begin{lemma}[Line-search conditions along the orbit]
\label{lem:steps}
Along each designed segment of the interpolant built for a fixed pair
$0<c_1<c_2<1$, the step $s_k$ from $x_k$ to $x_{k+1}$ is a
strong Wolfe, weak Wolfe, Armijo, and Goldstein step with those
constants, has length $1$, and is the
first local minimizer of $\varphi(t)=f(x_k+td_k)$ on $t\ge0$.
\end{lemma}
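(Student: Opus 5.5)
Fix $k\ge0$ and work along the search ray $x_k+td_k$ with $d_k\in\R_{>0}s_k$; after reparametrizing by arclength it suffices to study $\phi(\tau)=f(x_k+\tau s_k)$ for $\tau\ge0$, since the first local minimizer and all line-search conditions are invariant under positive rescaling of $d_k$. The plan is to identify $\phi$ with the axial profile $a=a_k$ on the whole relevant range, and then read off every claim from Lemmas~\ref{lem:profile} and~\ref{lem:armijo}.

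\emph{Step 1: $\phi=a$ on $[0,1+\epsilon)$ for some $\epsilon>0$.} For $\tau\in[0,1]$, the point $x_k+\tau s_k$ lies on segment $k$ of $P$, where Lemma~\ref{lem:global}(i) gives $f=F$. By Corollary~\ref{cor:F} and the definition \eqref{eq:Ftilde} at $n_k=0$, $F=a(\tau)$ there. For $\tau$ slightly beyond $1$, the point lies in the inner disk $B(x_{k+1},r_\circ)$, where Lemma~\ref{lem:core} gives $\zeta\equiv1$, so $f=F=Q_{k+1}$. On $\{\tau_k\ge1-\delta\}$, Lemma~\ref{lem:collar} gives $\widetilde F_k\equiv Q_{k+1}$, hence $\phi(\tau)=f_{k+1}+\tfrac12\alpha_{k+1}(\tau-1)^2$ for $\tau\in[1-\delta,1+r_\circ)$. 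The same reasoning shows that $\phi$ is a single smooth function, equal to $a$ on $[0,1]$ and to this quadratic just past $1$.

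\emph{Step 2: first local minimizer.} By Lemma~\ref{lem:profile}(i), $\phi'=a'<0$ on $[0,1)$, so there is no local minimizer in $[0,1)$; in particular $\tau=0$ is not one, because $\phi'(0)=\eta_k<0$. At $\tau=1$ we have $\phi'(1)=0$, and $\phi''(1)=\alpha_{k+1}>0$ (in fact $\phi$ is a strictly convex quadratic near $1$), so $\tau=1$ is a strict local minimizer. It is therefore the first local minimizer on $t\ge0$, the step has length $\|s_k\|=1$, and $x_{k+1}=x_k+s_k$.

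\emph{Step 3: line-search conditions.} Armijo and Goldstein hold (strictly) by Lemma~\ref{lem:armijo}, because the ratio equals $\rho\in(c_1,c_2)$. For the curvature condition, $\ip{\nabla f(x_{k+1})}{s_k}=\ip{g_{k+1}}{s_k}=0$ by Lemma~\ref{lem:global}(i), which gives $\nabla f(x_{k+1})=\nabla F(x_{k+1})=\nabla Q_{k+1}(x_{k+1})=g_{k+1}$, together with Lemma~\ref{lem:orbit}(i). Thus $0\le c_2|\eta_k|$, so strong Wolfe holds, and weak Wolfe follows. The same identification at $x_k$ (with $Q_0$ for $k=0$) confirms $\nabla f(x_k)=g_k$, so $\eta_k$ is the true directional derivative.

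The main obstacle is Step 1: we must ensure that the cutoff $\zeta$ and the neighbouring pieces do not perturb $f$ along the ray. This is handled because the segment itself lies in $P$, where $\zeta\equiv1$, and because overlaps of pieces occur only in the collars, where all templates agree with $Q_{k+1}$ (Lemma~\ref{lem:seam}(ii)). Behaviour of $\phi$ far beyond $\tau=1$ is irrelevant to the claim.
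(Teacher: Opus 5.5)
Your proposal is correct and follows essentially the same route as the paper. It uses $a'<0$ on $[0,1)$ with $a'(1)=0$ from Lemma~\ref{lem:profile}(i), the identification $f\equiv Q_{k+1}$ on the inner disk $B(x_{k+1},r_\circ)$ via Lemma~\ref{lem:core} to make $\tau=1$ a strict local minimizer, the orthogonality $g_{k+1}\perp s_k$ for the curvature condition, and Lemma~\ref{lem:armijo} for Armijo and Goldstein; your explicit checks that $\phi=a$ on $[0,1]$ (through $f\equiv F$ on $P$) and that $\nabla f(x_{k+1})=g_{k+1}$ spell out points the paper's proof uses only implicitly.
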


\begin{proof}
\emph{Curvature.} Curvature asks that
$\bigl|\ip{g_{k+1}}{s_k}\bigr|\le c_2|\eta_k|$. By Lemma~\ref{lem:orbit}(i),
$g_{k+1}\perp s_k$, so the left side is $0$, and the inequality holds for
every $c_2$. Weak Wolfe is the same one-sided inequality.

\emph{Length.} $\|s_k\|=1$.

\emph{Armijo and Goldstein.} By Lemma~\ref{lem:armijo}, each segment's
split parameter $\lambda_k\in(0,1)$ realizes the ratio $\rho=(c_1+c_2)/2$,
which lies strictly between $c_1$ and $c_2$.

\emph{First local minimizer.} Along the search ray $t\mapsto x_k+td_k$, write
$t=\tau/\|d_k\|$ so that $x_k+td_k = x_k+\tau s_k$.
By Lemma~\ref{lem:profile}(i), $a'<0$ on $[0,1)$
and $a'(1)=0$, so along the ray there is no stationary point before $\tau=1$, and $\tau=1$ is stationary.
Past the vertex ($\tau > 1$), the ray $x = x_{k+1} + (\tau-1)s_k$ continues straight into the inner vertex disk $B(x_{k+1}, r_\circ)$, where $\zeta \equiv 1$ and $f \equiv Q_{k+1}$ by Lemma~\ref{lem:core}.
Because $g_{k+1} \perp s_k$ (Lemma~\ref{lem:orbit}(i)) and $\|s_k\|=1$, the directional derivative along the ray satisfies
\[
\ip{\nabla Q_{k+1}(x_{k+1}+(\tau-1)s_k)}{s_k} = \alpha_{k+1}(\tau-1) > 0 \quad \text{for all } \tau \in (1, 1+r_\circ).
\]
Since $d_k = \|d_k\|s_k$, the derivative $\varphi'(t) = \ip{\nabla f(x_k+td_k)}{d_k}$ strictly changes sign from negative on $[0, 1/\|d_k\|)$ to positive on $(1/\|d_k\|, (1+r_\circ)/\|d_k\|)$.
Thus $t = 1/\|d_k\| = \|s_k\|/\|d_k\|$ is a strict local minimizer, and indeed the first local minimizer of $\varphi$ on $t\ge0$.
(It is not a global minimizer on the ray: outside the tubular neighborhood $\mathcal U$, $\varphi$ meets the constant $f_\infty < f(x_{k+1})$ by Lemma~\ref{lem:finf}.)
\end{proof}

\begin{proof}[Proof of Theorem~\ref{thm:main}]
Given $0<c_1<c_2<1$, let $\delta$ and the split parameters $\lambda_k \in (0, 1)$ be chosen as in Lemma~\ref{lem:armijo}.
Let $f \in C^\infty(\R^2)$ be the global interpolant constructed in \eqref{eq:fdef} on the discrete orbit \eqref{eq:orbit}, starting at $x_0 = (0, 0)$.
By Lemma~\ref{lem:global}, $f$ is smooth, bounded below, and has Lipschitz gradient $\nabla f$ on $\R^2$.
Along the polyline $P$, $f \equiv F$ and $\nabla f = \nabla F$; in particular, at each vertex $x_k$, $\nabla f(x_k) = g_k$, so $\|\nabla f(x_k)\| = \|g_k\| = 1$ for all $k \ge 0$.

Let a method of class $\mathcal C$ be started at $x_0$ with the initial direction $d_0 = -g_0$. We prove by induction on $k \ge 0$ the statement
\begin{quote}
$P(k)$: the method visits the vertices $x_0, \dots, x_{k+1}$ of \eqref{eq:orbit}, using the search directions $d_j \in \R_{>0}\,s_j$ and the steps $s_j$ for $0 \le j \le k$.
\end{quote}
For the base case $P(0)$, the initialization and \eqref{eq:orbit} give $d_0 = -g_0 = s_0$, and along the ray $t \mapsto x_0 + td_0$ Lemma~\ref{lem:steps} identifies the designed step $s_0$ (that is, $t = 1$) as the first local minimizer of $\varphi$ on $t \ge 0$, so the line search returns $x_1 = x_0 + s_0$.
Assume now $P(k)$. Since $\nabla f(x_j) = g_j$ at every vertex, the hypotheses of Lemma~\ref{lem:heading} are met at index $k$, and it yields $d_{k+1} \in \R_{>0}\,s_{k+1}$.
Along the ray $t \mapsto x_{k+1} + td_{k+1}$, Lemma~\ref{lem:steps} again identifies the designed step $s_{k+1}$ as the first local minimizer on $t \ge 0$, so the line search returns $x_{k+2} = x_{k+1} + s_{k+1}$; this is $P(k+1)$.
By induction, the method traces the entire sequence $(x_k)_{k \ge 0}$.

Finally, by Lemma~\ref{lem:steps}, each designed step $s_k$ has unit length $\|s_k\| = 1$ and satisfies the strong Wolfe, weak Wolfe, Armijo, and Goldstein conditions with constants $(c_1, c_2)$.
\end{proof}

We conclude by highlighting the dynamical mechanism behind this nonconvergence in relation to Zoutendijk's classical condition~\cite{zoutendijk1970}.
Because the search directions satisfy $\cos\theta_k = \sin(\pi/2^{k+1}) = O(2^{-k})$, the geometric decay of the cosines yields
\[
\sum_{k=0}^\infty \cos^2\theta_k\,\|\nabla f(x_k)\|^2 = \sum_{k=0}^\infty \sin^2\bigl(\pi/2^{k+1}\bigr) < \infty
\]
while the gradient norm remains strictly bounded away from zero ($\|\nabla f(x_k)\| \equiv 1$).
This demonstrates that Zoutendijk's condition $\sum \cos^2\theta_k \|g_k\|^2 < \infty$ can hold with $\liminf_{k\to\infty} \|g_k\| = 1 > 0$ precisely because the search direction becomes increasingly orthogonal to the gradient at an exponential rate.

\section{Numerical verification}
\label{sec:numerical}

To verify Theorem~\ref{thm:main} empirically, we evaluate the construction and the quasi-Newton algorithms using the companion Python package.
We instantiate the objective function with line-search parameters $c_1 = 0.1$ and $c_2 = 0.9$, collar width $\delta = 1/60$ from \eqref{eq:delta-armijo}, and split parameters $\lambda_k \in (0, 1)$ given by \eqref{eq:lambdachoice}, so that every segment realizes the Armijo ratio $\rho = (c_1 + c_2)/2 = 0.5$.

\begin{table}[htbp]
\centering
\small
\caption{Trajectory and line-search metrics under strong Wolfe line search (L-BFGS, $m = 5$).}
\label{tab:numerical}
\begin{tabular}{rccccccc}
\hline
$k$ & $x_k$ & $\|x_k\|$ & $f(x_k)$ & $\|g_k\|$ & $\cos\theta_k$ & $\rho$ & $|\ip{g_{k+1}}{d_k}| / |\ip{g_k}{d_k}|$ \\
\hline
0  & $(0.0000,\, 0.0000)$   & $0.0000$ & $\phantom{-}0.0000$ & $1.0000$ & $1.0000$              & $0.5000$ & $1.67 \times 10^{-16}$ \\
1  & $(1.0000,\, 0.0000)$   & $1.0000$ & $-0.5000$           & $1.0000$ & $0.7071$              & $0.5000$ & $1.44 \times 10^{-15}$ \\
2  & $(1.7071,\, 0.7071)$   & $1.8478$ & $-0.8536$           & $1.0000$ & $0.3827$              & $0.5000$ & $3.00 \times 10^{-15}$ \\
5  & $(4.3444,\, 2.1168)$   & $4.8326$ & $-1.1914$           & $1.0000$ & $4.91 \times 10^{-2}$ & $0.5000$ & $0.00$ \\
10 & $(8.6687,\, 4.6264)$   & $9.8260$ & $-1.2390$           & $1.0000$ & $1.53 \times 10^{-3}$ & $0.5000$ & $7.24 \times 10^{-14}$ \\
20 & $(17.3291,\, 9.6261)$  & $19.8233$& $-1.2405$           & $1.0000$ & $1.50 \times 10^{-6}$ & $0.5000$ & $0.00$ \\
30 & $(25.9894,\, 14.6261)$ & $29.8223$& $-1.2405$           & $1.0000$ & $1.46 \times 10^{-9}$ & $0.5000$ & $0.00$ \\
40 & $(34.6497,\, 19.6261)$ & $39.8219$& $-1.2405$           & $1.0000$ & $1.43 \times 10^{-12}$& ---      & --- \\
\hline
\end{tabular}
\end{table}

Table~\ref{tab:numerical} reports the trajectory simulated under standard IEEE 754 double precision up to $k = 40$ using L-BFGS ($m=5$).
Here $\cos\theta_k = -\ip{g_k}{d_k}/(\|g_k\|\|d_k\|)$ denotes the search angle cosine, $\rho = (f(x_{k+1}) - f(x_k))/\ip{g_k}{s_k}$ is the measured Armijo ratio for the step from $x_k$ to $x_{k+1}$, and $|\ip{g_{k+1}}{d_k}| / |\ip{g_k}{d_k}|$ is the Wolfe curvature ratio (transition metrics are omitted at the final iterate $k=40$).
The empirical trajectory confirms the theoretical predictions: the gradient norm remains identically $\|g_k\| \equiv 1.0000$ without numerical decay ($\liminf_{k\to\infty} \|g_k\| = 1 > 0$), while iterates advance with unit steps $\|s_k\| = 1$ and diverge linearly away from the origin ($\|x_{40}\| \approx 39.82$).
At every step, the measured Armijo ratio is $0.5000$, matching the designed value $\rho=(c_1+c_2)/2$ centered in $(c_1, c_2) = (0.1, 0.9)$, and the Wolfe curvature ratio remains at machine zero ($\le 10^{-13}$), confirming that each step lands at an exact local minimizer and satisfies the strong Wolfe, weak Wolfe, Armijo, and Goldstein conditions.
Moreover, while Theorem~\ref{thm:main} and Lemma~\ref{lem:heading} theoretically guarantee this trajectory for all methods in Class~$\mathcal{C}$ in exact arithmetic, numerical evaluations of representative implementations (specifically L-BFGS with memories $m \in \{1, 3, 5\}$) confirm that their search directions are collinear to machine precision ($|\cos(\angle(d_k, s_k)) - 1| \le 2.22 \times 10^{-16}$), tracing the identical numerical sequence.

Because the Zoutendijk cosine $\cos\theta_k = \sin(\pi/2^{k+1})$ decays geometrically, the per-step function reduction $\Delta f_k = f(x_k) - f(x_{k+1}) = \rho\cos\theta_k = \frac{1}{2}\sin(\pi/2^{k+1})$ (using $\rho = 0.5$ in this instance) falls below an ulp of $|f(x_k)| \approx 1.24$ near $k \approx 53$.
In standard 64-bit floating point, the difference $f(x_k) - f(x_{k+1})$ rounds to zero, producing an Armijo ratio of $0/\text{tiny} = 0 < c_1$ and causing the Armijo line search to stall.
This stalling reflects hardware mantissa precision rather than a mathematical limitation: simulating the recurrence \eqref{eq:orbit} in arbitrary precision via \texttt{mpmath} with $3{,}100$ decimal digits tracks $10{,}000$ iterations seamlessly (reaching distance $\|x_{10{,}000}\| \approx 9999.82$), maintaining $\|g_k\| \equiv 1.0$ and $\ip{s_k}{g_{k+1}} = 0$ to full precision and confirming that the unbounded linear escape persists over arbitrarily long horizons.

\section{Remarks}
\label{sec:remarks}

\paragraph{Universal versus parameter-specific objective functions.}
The construction established in Section~\ref{sec:f} builds a specific objective function $f \in C^\infty(\R^2)$ tailored to each prescribed pair of line-search parameters $(c_1, c_2)$ through the calibrated tube width $\delta = \min(1/20, c_1/6, (1-c_2)/6)$ in \eqref{eq:delta-armijo}.
Whether there exists a single universal smooth function $f$ bounded below on which Class~$\mathcal C$ fails to converge for \emph{all} $0<c_1<c_2<1$ simultaneously remains an intriguing open question.

\paragraph{Lipschitz gradient scaling.}
By Lemma~\ref{lem:global}(iii), the Hessian satisfies $\|\nabla^2 f\| \le K/\delta^2$ on $\R^2$.
When the line-search parameters approach the boundary of $(0, 1)$ (as $c_1 \to 0$ or $c_2 \to 1$), the width $\delta$ must shrink to accommodate the extreme Armijo ratio, and the gradient Lipschitz constant scales as $O\bigl(\max(c_1^{-2}, (1-c_2)^{-2})\bigr)$.
Investigating whether this $O(1/\delta^2)$ scaling can be improved through a non-polynomial transverse profile is an interesting direction for future analysis.

\paragraph{Sharpness of level-set boundedness in Powell's theorem.}
Powell~\cite{powell2000} proved that for DFP (and hence BFGS by Dixon's theorem~\cite{dixon1972}) in $\R^2$ with first-local-minimizer line searches, the calculated gradients cannot remain bounded away from zero ($\liminf_{k\to\infty} \|\nabla f(x_k)\| = 0$) provided the sublevel set $\{x \in \R^2 : f(x) \le f(x_0)\}$ is bounded.
In our construction, the iterates escape linearly to infinity ($\|x_k\| \to \infty$), making this sublevel set intrinsically unbounded.
This shows that the boundedness of level sets in Powell's theorem cannot be relaxed to lower boundedness, confirming it as an indispensable structural hypothesis in $\R^2$.

\section*{Acknowledgements}

Large language models were used to search for candidate counterexamples,
to draft verification code, and to assist with the writing of the
manuscript. All mathematical claims, the counterexample, the
verification programs, and the text were checked by the author, who
takes full responsibility for the contents of the paper.

\appendix
\section{Smooth building blocks}
\label{sec:app-blocks}

This appendix collects the proofs and elementary analytical properties of the standard smooth step function $S$ and the localized window $\Pi$ introduced in Section~\ref{sec:template}.
Recall from \eqref{eq:Sdef} that $\chi(t)=\exp(-1/t)\mathbf{1}_{\{t>0\}}$ and $S(u)=\frac{\chi(u)}{\chi(u)+\chi(1-u)}$ for $u\in\R$.

\begin{lemma}[Smooth switch $S$]
\label{lem:S}
$S\in C^\infty(\R)$, and:
\begin{enumerate}
\item[(i)] $S(u)=0$ for $u\le0$, $S(u)=1$ for $u\ge1$, and $S(u)\in[0,1]$ for every $u\in\R$;
\item[(ii)] $S^{(m)}(u)=0$ for every $u\notin(0,1)$ and every $m\ge1$;
\item[(iii)] $\int_0^1S(u)\,du=\tfrac12$;
\item[(iv)] every derivative of $S$ is bounded on $\R$.
\end{enumerate}
\end{lemma}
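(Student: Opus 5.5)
The plan is to work directly from the closed form $S(u)=\chi(u)/(\chi(u)+\chi(1-u))$. The one preliminary fact needed is the classical one: $\chi\in C^\infty(\R)$, and every derivative of $\chi$ vanishes at $0$.

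To see this, I would show by induction that for $t>0$ one has $\chi^{(m)}(t)=p_m(1/t)e^{-1/t}$ for some polynomial $p_m$. Since $p_m(1/t)e^{-1/t}\to0$ as $t\downarrow0$, the one-sided difference quotients at $0$ all vanish, and inductively every $\chi^{(m)}$ is continuous with $\chi^{(m)}(0)=0$.

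Next I would check that the denominator $D(u)=\chi(u)+\chi(1-u)$ is strictly positive everywhere. Every $u$ satisfies $u>0$ or $1-u>0$, so at least one term is positive. Hence $S$ is a quotient of $C^\infty$ functions with a nonvanishing denominator, and $S\in C^\infty(\R)$.

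For (i), the boundary behaviour is immediate from the formula:
\begin{itemize}
\item for $u\le0$, $\chi(u)=0$ while $D>0$, so $S(u)=0$;
\item for $u\ge1$, $\chi(1-u)=0$, so $S(u)=\chi(u)/\chi(u)=1$;
\item in general both terms of $D$ are non-negative, so $0\le\chi(u)\le D(u)$ and $S\in[0,1]$.
\end{itemize}

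Part (ii) follows from (i). $S$ is constant on the open sets $(-\infty,0)$ and $(1,\infty)$, so all derivatives of order $m\ge1$ vanish there. By continuity of $S^{(m)}$, they also vanish at $u=0$ and $u=1$.

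For (iii) I would use the symmetry
\[
S(u)+S(1-u)=\frac{\chi(u)+\chi(1-u)}{\chi(u)+\chi(1-u)}=1 .
\]
Integrating over $[0,1]$ and substituting $u\mapsto1-u$ in the second term gives $2\int_0^1S=1$, so $\int_0^1 S=\tfrac12$.

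For (iv), each $S^{(m)}$ is continuous and, by (ii), vanishes outside the compact interval $[0,1]$. A continuous function supported in a compact set is bounded, so every derivative of $S$ is bounded on $\R$.

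The main obstacle is only the standard flatness of $\chi$ at $0$. I would handle it via the polynomial-times-exponential representation together with the limit $y^je^{-y}\to0$ as $y\to\infty$. Everything else is algebraic.
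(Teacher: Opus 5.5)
Your proposal is correct and follows essentially the same route as the paper: smoothness of $\chi$ with all derivatives vanishing at $0$ and a positive denominator, the symmetry $S(u)+S(1-u)=1$ for (iii), and continuity plus vanishing outside $[0,1]$ for (iv). You spell out the flatness of $\chi$ and derive (ii) from the constancy of $S$ on the open half-lines together with continuity of $S^{(m)}$ at the endpoints, where the paper appeals directly to $\chi^{(m)}(0)=0$; this is only a minor variation.
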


\begin{proof}
Because $\chi(t) = \exp(-1/t)\mathbf{1}_{\{t>0\}}$ is smooth on $\R$ with all derivatives vanishing at $t=0$, and $\chi(u) + \chi(1-u) > 0$ for all $u \in \R$, $S$ is smooth on $\R$.
Properties (i) and (ii) follow directly from $\chi \equiv 0$ on $(-\infty, 0]$ and $\chi^{(m)}(0)=0$ for all $m \ge 0$, which clamp $S(u) = 0$ for $u \le 0$ and $S(u) = 1$ for $u \ge 1$ with flat boundary derivatives of all orders.

(iii) Replacing $u$ by $1-u$ in \eqref{eq:Sdef} swaps the two terms in the denominator, yielding $S(1-u) = \frac{\chi(1-u)}{\chi(1-u)+\chi(u)} = 1 - S(u)$.
Integrating $S(u) + S(1-u) = 1$ over $[0, 1]$ yields $2\int_0^1 S(u)\,du = 1$ by symmetry, whence $\int_0^1 S(u)\,du = \frac12$.

(iv) For every $m \ge 1$, $S^{(m)}$ is continuous on $\R$ and vanishes identically outside the compact interval $[0, 1]$ by (ii), hence $S^{(m)}$ is bounded on $\R$.
\end{proof}

\begin{lemma}[Smooth transition]
\label{lem:trans}
For any $t_0 < t_1$, $S(\cdot; t_0, t_1) \in C^\infty(\R)$, and:
\begin{enumerate}
\item[(i)] $S(t; t_0, t_1) = 0$ for $t \le t_0$, $S(t; t_0, t_1) = 1$ for $t \ge t_1$, and $S(t; t_0, t_1) \in [0, 1]$ everywhere;
\item[(ii)] for every $m \ge 1$, $\frac{d^m}{dt^m}S(t; t_0, t_1) = \frac{1}{(t_1-t_0)^m}S^{(m)}\bigl(\frac{t-t_0}{t_1-t_0}\bigr)$, so $\frac{d^m}{dt^m}S(t; t_0, t_1) = 0$ for $t \notin (t_0, t_1)$ and $\|\frac{d^m}{dt^m}S(\cdot; t_0, t_1)\|_\infty = \frac{\|S^{(m)}\|_\infty}{(t_1-t_0)^m}$;
\item[(iii)] the transition integral satisfies $\int_{t_0}^{t_1} S(t; t_0, t_1)\,dt = \frac{t_1-t_0}{2}$;
\item[(iv)] for $0 \le t_0 < t_1$, the reversed transition cap satisfies $\int_0^{t_1} \bigl(1 - S(t; t_0, t_1)\bigr)\,dt = \frac{t_0+t_1}{2}$.
\end{enumerate}
\end{lemma}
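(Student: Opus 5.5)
The plan is to reduce every item to the corresponding property of the unit-interval switch $S$ from Lemma~\ref{lem:S}, via the affine change of variables $u=(t-t_0)/(t_1-t_0)$. Since $t_1-t_0>0$, this map is an increasing affine bijection of $\R$ sending $t_0\mapsto 0$ and $t_1\mapsto 1$. Hence $S(\cdot;t_0,t_1)$, being the composition of $S\in C^\infty(\R)$ with an affine map, is $C^\infty$.

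For (i), note that $t\le t_0$ corresponds exactly to $u\le 0$ and $t\ge t_1$ to $u\ge 1$. The values $0$, $1$ and the range $[0,1]$ then transfer directly from Lemma~\ref{lem:S}(i).

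For (ii), I would differentiate $m$ times using the chain rule; each differentiation of the inner affine map produces a factor $(t_1-t_0)^{-1}$, which gives the stated formula. The vanishing outside $(t_0,t_1)$ follows from Lemma~\ref{lem:S}(ii), since $t\notin(t_0,t_1)$ iff $u\notin(0,1)$. The sup-norm identity follows because the affine map is surjective, so the supremum over $t$ equals the supremum of $|S^{(m)}|$ over $u\in\R$, and this is finite by Lemma~\ref{lem:S}(iv).

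For (iii), I would substitute $t=t_0+(t_1-t_0)u$, $dt=(t_1-t_0)\,du$, which turns the integral into $(t_1-t_0)\int_0^1 S(u)\,du=(t_1-t_0)/2$ by Lemma~\ref{lem:S}(iii).

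For (iv), I would split $[0,t_1]=[0,t_0]\cup[t_0,t_1]$, which is legitimate since $0\le t_0$. On $[0,t_0]$ the integrand $1-S$ is identically $1$ by (i), contributing $t_0$. On $[t_0,t_1]$, part (iii) gives $(t_1-t_0)-(t_1-t_0)/2=(t_1-t_0)/2$. The total is $t_0+(t_1-t_0)/2=(t_0+t_1)/2$.

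There is no real obstacle. The only points needing care are recording the hypothesis $t_0\ge 0$ in (iv), so that the left piece $[0,t_0]$ is a genuine interval on which $1-S\equiv 1$, and noting surjectivity of the affine map so that the sup-norm in (ii) is an equality rather than merely an upper bound.
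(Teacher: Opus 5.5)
Your proposal is correct and follows the paper's proof: smoothness and (i)--(iii) are transferred from Lemma~\ref{lem:S} through the affine substitution $u=(t-t_0)/(t_1-t_0)$, and (iv) splits $[0,t_1]$ into $[0,t_0]\cup[t_0,t_1]$ and uses (iii) on the second piece. Your remark that surjectivity of the affine map makes the sup-norm identity in (ii) an equality rather than just an upper bound is a small detail the paper leaves implicit.
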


\begin{proof}
Smoothness and (i)--(iii) follow directly from Lemma~\ref{lem:S} via the affine change of variable $u = (t-t_0)/(t_1-t_0)$, for which $dt = (t_1-t_0)\,du$.
For (iv), the integrand equals $1$ on $[0, t_0]$ and $1-S(t; t_0, t_1)$ on $[t_0, t_1]$, so
\[
\int_0^{t_1} \bigl(1 - S(t; t_0, t_1)\bigr)\,dt = t_0 + \int_{t_0}^{t_1} \bigl(1 - S(t; t_0, t_1)\bigr)\,dt = t_0 + (t_1-t_0) - \frac{t_1-t_0}{2} = \frac{t_0+t_1}{2}. \qedhere
\]
\end{proof}

\begin{lemma}[Smooth window]
\label{lem:window}
For any $t_0 < t_1 \le t_2 < t_3$, $\Pi(\cdot; t_0, t_1, t_2, t_3) \in C^\infty(\R)$, and:
\begin{enumerate}
\item[(i)] $\Pi(t; t_0, t_1, t_2, t_3) \in [0, 1]$ everywhere; $\Pi \equiv 1$ on $[t_1, t_2]$, and $\Pi(t; t_0, t_1, t_2, t_3) = 0$ for $t \notin (t_0, t_3)$;
\item[(ii)] for every $m \ge 1$, $\Pi^{(m)}(t; t_0, t_1, t_2, t_3) = 0$ for all $t \notin (t_0, t_1) \cup (t_2, t_3)$, and
\[
\|\Pi^{(m)}(\cdot; t_0, t_1, t_2, t_3)\|_\infty \le \frac{\|S^{(m)}\|_\infty}{\min(t_1-t_0,\, t_3-t_2)^m};
\]
\item[(iii)] the total mass on $\R$ is the distance between the two transition midpoints:
\begin{equation}
\label{eq:Pimass}
\int_{-\infty}^\infty \Pi(t; t_0, t_1, t_2, t_3)\,dt = \frac{t_2+t_3}{2} - \frac{t_0+t_1}{2};
\end{equation}
\item[(iv)] when $t_1 = t_2$ and $t_1 - t_0 = t_3 - t_2 =: \Delta$ (the symmetric spike window centered at $t_1$), $\Pi(t_1+u) = \Pi(t_1-u)$ for all $u \in \R$. Consequently, its total mass is $\Delta$, and its centroid (center of mass) is identically $t_1$:
\begin{equation}
\label{eq:Picentroid}
\int_{-\infty}^\infty t\,\Pi(t; t_1-\Delta, t_1, t_1, t_1+\Delta)\,dt = t_1\,\Delta.
\end{equation}
\end{enumerate}
\end{lemma}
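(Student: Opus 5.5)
The plan is to derive everything from the definition $\Pi = S(\cdot;t_0,t_1) - S(\cdot;t_2,t_3)$ and the properties of the transition $S(\cdot;t_0,t_1)$ in Lemma~\ref{lem:trans}. Smoothness is immediate, since $\Pi$ is a difference of two $C^\infty$ functions.

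For (i), I split $\R$ into the five intervals $(-\infty,t_0]$, $[t_0,t_1]$, $[t_1,t_2]$, $[t_2,t_3]$, $[t_3,\infty)$ and evaluate both transitions on each using Lemma~\ref{lem:trans}(i):
\begin{itemize}
\item on $(-\infty,t_0]$ both transitions vanish, so $\Pi=0$;
\item on $[t_0,t_1]$ the second transition vanishes (because $t\le t_1\le t_2$), so $\Pi = S(t;t_0,t_1)\in[0,1]$;
\item on $[t_1,t_2]$ the first transition is $1$ and the second is $0$, so $\Pi=1$;
\item on $[t_2,t_3]$ we get $\Pi = 1-S(t;t_2,t_3)\in[0,1]$;
\item on $[t_3,\infty)$ we get $\Pi=1-1=0$.
\end{itemize}
This gives the range, the plateau, and the support claim. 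The degenerate case $t_1=t_2$ fits the same case split without change.

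For (ii), I differentiate termwise. By Lemma~\ref{lem:trans}(ii), the $m$-th derivative of $S(\cdot;t_0,t_1)$ is supported in $(t_0,t_1)$, and that of $S(\cdot;t_2,t_3)$ is supported in $(t_2,t_3)$. These open intervals are disjoint because $t_1\le t_2$. Hence at each $t$ at most one term is nonzero, and the sup bound is the larger of the two scaled bounds $\|S^{(m)}\|_\infty/(t_1-t_0)^m$ and $\|S^{(m)}\|_\infty/(t_3-t_2)^m$. That larger value is exactly the stated bound with the minimum width in the denominator. The disjointness is what avoids a factor of $2$ here.

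For (iii), I integrate over $[t_0,t_3]$, outside of which $\Pi=0$, using the piecewise description from (i). The first ramp contributes $(t_1-t_0)/2$ by Lemma~\ref{lem:trans}(iii). The plateau contributes $t_2-t_1$. The second ramp contributes $(t_3-t_2)-(t_3-t_2)/2=(t_3-t_2)/2$. These sum to $\tfrac{t_2+t_3}{2}-\tfrac{t_0+t_1}{2}$.

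For (iv), symmetry is the only step needing care. I use the identity $S(1-u)=1-S(u)$ from the proof of Lemma~\ref{lem:S}(iii). With $t_1=t_2$ and equal widths $\Delta$, I write
\[
\Pi(t_1+u)=S\bigl(1+u/\Delta\bigr)-S\bigl(u/\Delta\bigr), \qquad \Pi(t_1-u)=S\bigl(1-u/\Delta\bigr)-S\bigl(-u/\Delta\bigr).
\]
Applying $S(1-v)=1-S(v)$ to each term of the second expression gives
\[
\Pi(t_1-u) = 1-S\bigl(u/\Delta\bigr)-\bigl(1-S(1+u/\Delta)\bigr) = S\bigl(1+u/\Delta\bigr)-S\bigl(u/\Delta\bigr) = \Pi(t_1+u).
\]
The mass equal to $\Delta$ then follows from (iii) with $t_0=t_1-\Delta$ and $t_3=t_1+\Delta$. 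For the centroid, I write $\int t\,\Pi\,dt = t_1\int\Pi\,dt+\int(t-t_1)\Pi\,dt$. The second integral vanishes because the integrand is odd about $t_1$ and compactly supported, which leaves $t_1\Delta$.
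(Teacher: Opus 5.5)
Your proposal is correct and follows the paper's proof essentially step for step: the same five-interval case split for (i), disjointness of the open ramp intervals for (ii), the three-piece integration for (iii), and evenness about $t_1$ plus the vanishing odd moment for (iv). The only cosmetic difference is in (iv): you apply $S(1-v)=1-S(v)$ to both terms of $\Pi(t_1-u)$ and obtain the symmetry for all $u\in\R$ at once, whereas the paper reduces to $u\ge0$ and uses the plateau values to collapse each side to a single $S$ term before reflecting.
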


\begin{proof}
Because $S(\cdot; \cdot, \cdot) \in C^\infty(\R)$, $\Pi \in C^\infty(\R)$.

(i) For $t \le t_0$, both transitions equal $0$, so $\Pi = 0$. On $[t_0, t_1]$, $S(t; t_2, t_3) = 0$, so $\Pi(t) = S(t; t_0, t_1) \in [0, 1]$. On $[t_1, t_2]$, $S(t; t_0, t_1) = 1$ and $S(t; t_2, t_3) = 0$, so $\Pi(t) = 1$. On $[t_2, t_3]$, $S(t; t_0, t_1) = 1$, so $\Pi(t) = 1 - S(t; t_2, t_3) \in [0, 1]$. For $t \ge t_3$, both transitions equal $1$, so $\Pi = 1 - 1 = 0$.

(ii) By Lemma~\ref{lem:trans}(ii), derivatives of the two summands vanish outside $(t_0, t_1)$ and $(t_2, t_3)$ respectively. Since $t_1 \le t_2$, the open intervals $(t_0, t_1)$ and $(t_2, t_3)$ are disjoint (at $t=t_1=t_2$, both derivatives vanish as well), so at any point $t \in \R$ at most one transition has non-zero derivatives, giving the stated bound.

(iii) Integrating over $[t_0, t_3] = [t_0, t_1] \cup [t_1, t_2] \cup [t_2, t_3]$:
\[
\int_{t_0}^{t_3} \Pi\,dt = \int_{t_0}^{t_1} S(t; t_0, t_1)\,dt + \int_{t_1}^{t_2} 1\,dt + \int_{t_2}^{t_3} \bigl(1 - S(t; t_2, t_3)\bigr)\,dt = \frac{t_1-t_0}{2} + (t_2-t_1) + \frac{t_3-t_2}{2},
\]
which is $\frac{t_2+t_3}{2} - \frac{t_0+t_1}{2}$.

(iv) By symmetry under $u \mapsto -u$, it suffices to take $u \ge 0$.
For $u \ge 0$, we have $t_1+u \ge t_1$, so $S(t_1+u; t_1-\Delta, t_1) = 1$, and $t_1-u \le t_1$, so $S(t_1-u; t_1, t_1+\Delta) = 0$.
Thus $\Pi(t_1+u) = 1 - S(t_1+u; t_1, t_1+\Delta) = 1 - S(u/\Delta)$.
By Lemma~\ref{lem:S}(iii), $1 - S(u/\Delta) = S(1 - u/\Delta) = S(t_1-u; t_1-\Delta, t_1) = \Pi(t_1-u)$.
Hence $\Pi(t_1+u) = \Pi(t_1-u)$ for all $u \in \R$, so $\Pi$ is even about $t_1$.
Its mass is $\frac{t_1+(t_1+\Delta)}{2} - \frac{(t_1-\Delta)+t_1}{2} = \Delta$, and the first moment of an even function about its line of symmetry equals the center times the mass, giving \eqref{eq:Picentroid}.
\end{proof}

\bibliographystyle{plain}
\bibliography{references}

@article{byrd1987,
  author  = {Byrd, R. H. and Nocedal, J. and Yuan, Y.},
  title   = {Global convergence of a class of quasi-{Newton} methods on convex problems},
  journal = {SIAM Journal on Numerical Analysis},
  volume  = {24},
  number  = {5},
  pages   = {1171--1190},
  year    = {1987},
  doi     = {10.1137/0724077}
}

@article{dai2002,
  author  = {Dai, Y.-H.},
  title   = {Convergence properties of the {BFGS} algorithm},
  journal = {SIAM Journal on Optimization},
  volume  = {13},
  number  = {3},
  pages   = {693--701},
  year    = {2002},
  doi     = {10.1137/S1052623401383455}
}

@article{dai2013,
  author  = {Dai, Y.-H.},
  title   = {A perfect example for the {BFGS} method},
  journal = {Mathematical Programming},
  volume  = {138},
  number  = {1--2},
  pages   = {501--530},
  year    = {2013},
  doi     = {10.1007/s10107-012-0522-2}
}

@article{liu1989,
  author  = {Liu, D. C. and Nocedal, J.},
  title   = {On the limited memory {BFGS} method for large scale optimization},
  journal = {Mathematical Programming},
  volume  = {45},
  pages   = {503--528},
  year    = {1989},
  doi     = {10.1007/BF01589116}
}

@article{mascarenhas2004,
  author  = {Mascarenhas, W. F.},
  title   = {The {BFGS} method with exact line searches fails for non-convex
             objective functions},
  journal = {Mathematical Programming},
  volume  = {99},
  number  = {1},
  pages   = {49--61},
  year    = {2004},
  doi     = {10.1007/s10107-003-0421-7}
}

@book{nocedalwright2006,
  author    = {Nocedal, J. and Wright, S. J.},
  title     = {Numerical Optimization},
  edition   = {2},
  publisher = {Springer},
  address   = {New York},
  year      = {2006},
  doi       = {10.1007/978-0-387-40065-5}
}

@inproceedings{powell1976,
  author    = {Powell, M. J. D.},
  title     = {Some global convergence properties of a variable metric algorithm
               for minimization without exact line searches},
  booktitle = {Nonlinear Programming},
  series    = {SIAM-AMS Proceedings},
  volume    = {9},
  editor    = {Cottle, R. W. and Lemke, C. E.},
  publisher = {SIAM},
  address   = {Philadelphia},
  pages     = {53--72},
  year      = {1976}
}

@article{powell2000,
  author  = {Powell, M. J. D.},
  title   = {On the convergence of the {DFP} algorithm for unconstrained
             optimization when there are only two variables},
  journal = {Mathematical Programming},
  volume  = {87},
  number  = {2},
  pages   = {281--301},
  year    = {2000},
  doi     = {10.1007/s101070050115}
}

@article{hestenes1952,
  author  = {Hestenes, M. R. and Stiefel, E.},
  title   = {Methods of conjugate gradients for solving linear systems},
  journal = {Journal of Research of the National Bureau of Standards},
  volume  = {49},
  number  = {6},
  pages   = {409--436},
  year    = {1952}
}

@article{broyden1970,
  author  = {Broyden, C. G.},
  title   = {The convergence of a class of double-rank minimization algorithms
             2. The new algorithm},
  journal = {IMA Journal of Applied Mathematics},
  volume  = {6},
  number  = {3},
  pages   = {222--231},
  year    = {1970},
  doi     = {10.1093/imamat/6.3.222}
}

@incollection{powell1984,
  author    = {Powell, M. J. D.},
  title     = {Nonconvex minimization calculations and the conjugate gradient method},
  booktitle = {Numerical Analysis ({Dundee}, 1983)},
  series    = {Lecture Notes in Mathematics},
  volume    = {1066},
  publisher = {Springer},
  address   = {Berlin},
  pages     = {122--141},
  year      = {1984},
  doi       = {10.1007/BFb0099521}
}

@article{wolfe1969,
  author  = {Wolfe, P.},
  title   = {Convergence conditions for ascent methods},
  journal = {SIAM Review},
  volume  = {11},
  number  = {2},
  pages   = {226--235},
  year    = {1969},
  doi     = {10.1137/1011036}
}

@incollection{zoutendijk1970,
  author    = {Zoutendijk, G.},
  title     = {Nonlinear programming, computational methods},
  booktitle = {Integer and Nonlinear Programming},
  editor    = {Abadie, J.},
  publisher = {North-Holland},
  address   = {Amsterdam},
  pages     = {37--86},
  year      = {1970}
}

@article{dixon1972,
  author  = {Dixon, L. C. W.},
  title   = {Quasi-{Newton} algorithms generate identical points},
  journal = {Mathematical Programming},
  volume  = {2},
  number  = {1},
  pages   = {383--387},
  year    = {1972},
  doi     = {10.1007/BF01584554}
}

@article{mascarenhas2014,
  author  = {Mascarenhas, W. F.},
  title   = {The divergence of the {BFGS} and {Gauss}--{Newton} methods},
  journal = {Mathematical Programming},
  volume  = {147},
  number  = {1},
  pages   = {253--276},
  year    = {2014},
  doi     = {10.1007/s10107-013-0720-6}
}

@article{li2001,
  author  = {Li, D.-H. and Fukushima, M.},
  title   = {A modified {BFGS} method and its global convergence in nonconvex minimization},
  journal = {Journal of Computational and Applied Mathematics},
  volume  = {129},
  number  = {1--2},
  pages   = {15--35},
  year    = {2001},
  doi     = {10.1016/S0377-0427(00)00539-7}
}

\end{document}